\crefname{hypothesis}{Hypothesis}{Hypotheses}
\title{Flux recovery for Cut finite element method and its application in a posteriori error estimation 
  \thanks{
This work was funded by the EPSRC grant EP/P01576X/1.
}}
\author{
	Daniela Capatina
	\thanks{LMAP \& CNRS UMR 5142, University of Pau, 64013 Pau, France 
	(\email{daniela.capatina@univ-pau.fr})
	}
\and 
	Cuiyu He
	\thanks{Department of Mathematics, University of Georgia, Athens, GA, 
	30605
  	(\email{cuiyu.he@uga.edu})
	}
	}
\def\IR{\mathbb R}
\newcommand{\bfn}{\boldsymbol n}
\newcommand{\bfx}{\boldsymbol x}
\newcommand{\bfp}{\boldsymbol p}
\newcommand{\bn}{\boldsymbol n}
\newcommand{\bsigma}{\boldsymbol \sigma}
\newcommand{\bfnu}{\boldsymbol \nu}
\newcommand{\bftau}{\boldsymbol \tau}
\newcommand{\bfzeta}{\boldsymbol \zeta}
\newcommand{\cE}{\mathcal{E}}
\newcommand{\cN}{\mathcal{N}}
\newcommand{\tn}{|\mspace{-1mu}|\mspace{-1mu}|}
\newcommand{\Og}{\Omega}
\numberwithin{equation}{section}
\newcommand{\jump}[1]{[\![#1]\!]}
\newcommand{\tnorm}[1]{\tn#1\tn}
\newcommand{\divvr}{ \nabla \cdot}
\newcommand{\cT}{\mathcal{T}}
\begin{document}

\maketitle

\begin{abstract}
In this article, we aim to recover locally conservative and $H(div)$ conforming fluxes for the linear  Cut Finite Element Solution with Nitsche's method for Poisson problems with Dirichlet boundary condition.
The computation of the conservative flux in the Raviart-Thomas space is completely local and does not require to solve any mixed problem. 
The $L^2$-norm of the difference between the numerical flux and the recovered flux can then be used as a posteriori error estimator in the adaptive mesh refinement procedure.
Theoretically we are able to prove the global reliability and local efficiency. The theoretical results are verified in the numerical results. Moreover, in the numerical results we also observe optimal convergence rate for the flux error.
\end{abstract}

\begin{keywords}
CutFEM, a Posteriori Error Estimation, Flux Recovery, Adaptive Mesh Refinement
\end{keywords}

\begin{AMS}
  68Q25, 68R10, 68U05
\end{AMS}
\section{Introduction}
Cut finite element method (CutFEM) may be regarded as a fictitious domain method.
The finite element fictitious domain method was introduced in
\cite{GP92} as an approach to simplify the meshing problem and then further improved in \cite{BIM05,HR09, BH10a,BE84, HH02}. The challenges for such methods are typically that if the mesh is cut in an unfavorable way
 the system can be ill-conditioned and accuracy can be lost. Several
 approaches have been proposed to handle this problem, all based on
 the idea of extending the stability of the solution in the bulk up to
 the boundary. In \cite{HR09}, boundary fluxes are evaluated using the
 gradient extended from internal elements. In \cite{JL13,HWX17,BE18, BVM18}, agglomeration of adjacent
 elements is used. Finally, in \cite{Bu10,BH12}, the weakly consistent ghost penalty term
 was proposed that serves purpose. 
 
The purpose of this paper is to design and analyze a locally conservative flux in the Raviart-Thomas space of order $0$ and $1$ for the linear CutFEM.
We will base our discussion on the approximation of Poisson's problem 
\cite{BH12} in two dimensions. In addition, the CutFEM uses Nitsche's method \cite{Nit71} to impose
Dirichlet boundary conditions and a ghost penalty term \cite{Bu10} to 
enhance stability in the boundary zone. 


One important application for the flux recovery is that the $L^2$-norm of the difference between the numerical flux and the recovered flux can be used in the a posteriori error estimation.
 The main motivation for studying the a posteriori  error estimation is the application of adaptive mesh refinement (AMR) procedure. It is well known that AMR is extremely useful for problems with singularities, discontinuities, sharp derivatives etc.. And it has been extensively studied in the last several decades, see e.g., \cite{verfurth1994posteriori, ainsworth2011posteriori}. 
  In \cite{EburmanChe-2020}, a residual based a posterior error estimator for the CutFEM method was studied. One drawback of residual based error estimation is that its 
  reliability constants are unknown and usually not polynomial-robust and problem dependent. On the other hand, it is well known that the difference between the numerical flux and a locally conservative (equilibrate) flux automatically yields an upper bound for the true energy error with a reliability constant being exactly $1$. 

Thanks to the sharp reliability, equilibrate flux recovery has been extensively studied for various finite element methods on fitted meshes in the last decade. It is well known that for discontinuous Galerkin methods, a locally element-wise equilibrate flux can be easily obtained thanks to the fact that the test functions are completely local  \cite{bastian2003,ern2007,Ai:07b, BFH:14,becker2016local}. For nonconforming finite element methods of odd order, a local element--wise construction can be also easily obtained by taking advantage of the nonconforming local basis functions \cite{marini1985,he2020generalized}. For second order nonconforming finite element method, an explicit construction is designed in \cite{kim2012flux}.
However, for nonconforming methods of general even orders and conforming finite element methods, local element-wise (explicit) flux recovery is not straightforward and usually local problems on star patches need to be solved \cite{Ai:08,ErnVo2015,becker2016local}. In \cite{odsaeter2017}, a conservative flux is obtained by adding a piecewise constant correction through minimizing a weighted global $L^2$-norm. 

We note that the method introduced in \cite{becker2016local} is applicable to various finite element methods, and furthermore, designed in a framework that fully takes advantages of the local basis functions for each method. For conforming finite elements on fitted meshes, this method only requires solving local problems that do not involve any hybrid mixed problem which are required e.g., in \cite{braess2009,cai2012robust,ErnVo2015}. In this work, we use a similar approach for CutFEM. For the interior elements not cut by the boundary, the recovered flux is similar to that in \cite{becker2016local} although it needs extra treatment for the ghost penalty term. 
Unlike the fitted methods whose mesh is an exact
 partition of the computational domain, one major challenge for cut
 finite elements is that the  domain cuts the background
 mesh in an arbitrary fashion. 
For cut elements, the recovery of the flux becomes complex due to non-standard terms in the variational formulation and partial intersections between the mesh and the domain, and, therefore, needs to be carefully designed to avoid artificial error. We divide boundary elements into two types, i.e., the set of boundary elements that are fitted and not fitted to the mesh, and design the flux differently.

Contrary to the classical approach, we consider the domain with non-polygonal boundary.
In order to achieve the same accuracy as the classical fitted method, 
boundary geometry and boundary data for CutFEM need to be approximated to similar
accuracy. It is therefore important in this context to derive error
estimators that are able to integrate both the discretization error of the
method and the discretization error of the geometry.
In this work, we approximate the physical geometry by a piecewise
affine polygonal domain. In \cite{EburmanChe-2020}, a boundary correction error was separated that particularly estimates the geometry approximation error. The computation of this term, however, is not trivial and requires the construction of a sub mesh. Nevertheless, numerical results have shown that such error is not necessary to compute since the boundary approximation error can already be captured by the residual based error estimator in \cite{EburmanChe-2020}. In this paper, we also discard such term and numerical results also confirm that our error estimator is able to catch both the boundary approximation errors and the discretization error due to the numerical method.



Although we herein restrict the discussion to the case of piecewise
affine approximation spaces from \cite{BH12}, we believe that the ideas introduced can
be extended for instance to the high order case discussed in \cite{BHL18} as regards the CutFEM method, and in \cite{becker2016local} as regards the flux reconstruction in two dimensions.
For other works treating a posteriori error estimation and cut cell techniques we refer to \cite{estep2011posteriori}, where a finite volume method was considered, and \cite{sun2020implicit} and \cite{di2019dual} where cut cell methods were applied.

This paper is organized as follows. In \cref{sec:2}, the model problem and the CutFEM are introduced.
In \cref{mixedProblem}, we design the local conservative flux by introducing an auxiliary mixed method and establish its well-posedness. 
In \cref{posteriori}, we apply the conservative flux in the a posteriori error estimation and establish its reliability and efficiency.
Finally, we show the  results of several numerical experiments in \cref{sec:6}.

\section{Model problem and the Cut Finite Element Method}\label{sec:2}
\subsection{The continuous problem}
Let $\Omega$ be a domain in $\mathbb{R}^d$ ($d=2$) with Lipschitz continuous,
piecewise smooth boundary $\partial \Omega$ with exterior unit normal $\bfn$. 
We consider the problem: find $u:\Omega \rightarrow \IR$ such that
\begin{equation}
\begin{split}
	-\Delta u &= f \qquad \mbox{ in } \Og,\\
	u &= g \qquad \mbox{on } \partial \Og,
\end{split}
\end{equation}
where $f\in L^2(\Omega)$ and $g\in H^{1/2}(\partial \Og)$.
For the sake of simplicity, we only consider the model problem of Laplacian operator with Dirichlet boundary conditions. However, the technique could be generalized to other boundary conditions and more complex elliptic operators.

Define the spaces
\[
	H^1_{g}(\Og) = \{v \in H^1(\Og): v = g \mbox{ on } \partial \Og\} \; \mbox{and} \;
	H^1_{0}(\Og) = \{v \in H^1(\Og): v = 0 \mbox{ on } \partial \Og\}.
\]
Then the weak formulation for this problem renders to find $u \in H^1_{g}(\Og)$ such that
\[
	a(u,v) = (f,v)_{\Og}, \quad \forall \,v \in H_{0}^1(\Og),
\]
where $a(u,v) = (\nabla u, \nabla v)_{\Og}$.
It follows from the Lax-Milgram lemma that there 
exists a unique solution $u \in H_{g}^1(\Omega)$ to this problem. 

\subsection{The mesh, discrete domain, and finite element spaces}
Assume that
$\partial \Omega$ is composed of a finite number of
  smooth surfaces $\Gamma_i$, such that $\partial \Omega = \underset{i}{\cup} \bar{\Gamma}_i$. 
We let $\rho$ 
be the signed distance function such that
\[
	\rho(x)
	\begin{cases} 
		< 0 &\mbox{if } x \in \Og,\\
		= 0 &\mbox{if } x \in \partial \Og,\\
		>0 &\mbox{if } x \in  \bar\Og^c,
	\end{cases}
\]
where  $\bar\Og^c$ is the complement of the closure of $\Og$.
We define
$U_\delta(\partial \Omega), \delta>0$,
be the tubular neighborhood $\{\bfx \in \IR^d : |\rho(\bfx)| < \delta\}$ 
of $\partial \Omega$.  Choose $\Omega_0 \subset \IR^d$ be 
the background domain (see e.g., the square outline of the entire mesh in \cref{Fig:domain-example}) such 
that it is polygonal, $\Og \subset \Og_0$ and 
$U_{\delta_0}(\partial \Omega)\subset \Omega_0$ 
where $\delta_0$ is chosen such that $\rho$ is well defined in $U_{\delta_0}(\partial\Omega)$. 
Let $\cT_{0,h}$ 
be a partition of $\Omega_0$ into shape 
regular triangles or tetrahedra (see e.g., the mesh in \cref{Fig:domain-example}). Note that this setting allows meshes with locally  
dense refinement.

\begin{center}
\begin{figure}\label{Fig:domain-example}
 \includegraphics[width=0.80\textwidth]{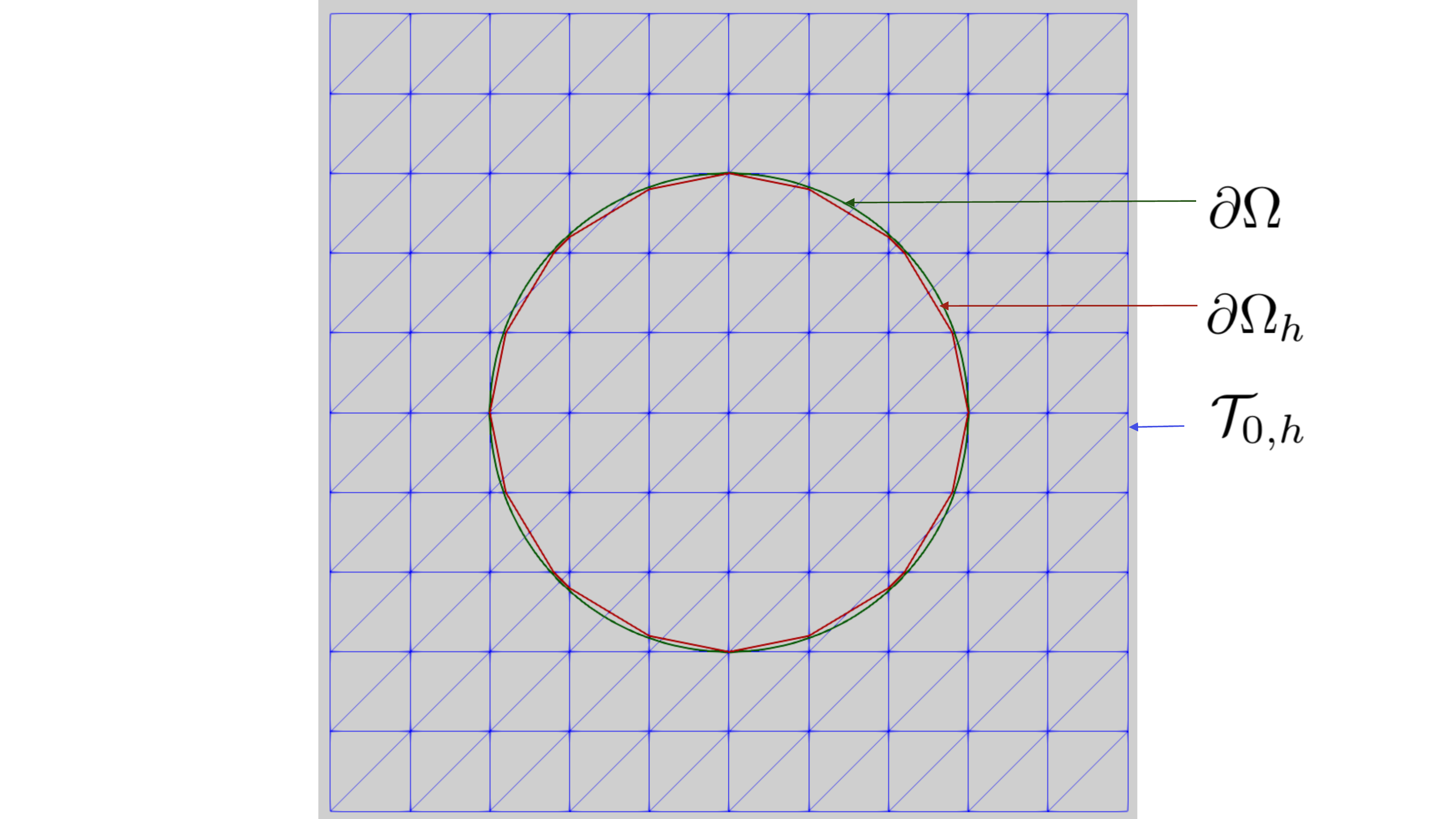}
 \caption{An example for the background mesh $\cT_{0,h}$,  $\partial \Og$ and $\partial \Og_h$. $\Og_0$ is the entire square domain.}
\end{figure}
\end{center}

Given a subset $\omega$ of $\Omega_0$, let 
$\cT_h(\omega)$ be the sub-mesh defined by
\begin{equation}
\cT_{h}(\omega) = \{K \in \cT_{0,h} : {K} \cap 
{\omega}
\neq \emptyset \},
\end{equation}
i.e., the sub-mesh consisting of elements that have non-zero intersection with
${\omega}$, and let 
\begin{equation}
\triangle_{h}(\omega)= \bigcup\limits_{K \in \cT_{h}(\omega)}K, 
\end{equation}
which is the union of all elements in $\cT_h(\omega)$.

For each $\cT_{0,h}$, let $\Omega_h$ (see \cref{Fig:domain-example} for an example) be a polygonal domain approximating $\Omega$. We assume that
$\partial \Omega_h \subset U_{\delta_0}(\partial \Omega)$, i.e.,  $\partial \Omega_h$ is within the distance of $\delta_0$ to $\partial \Omega$. Moreover, we also require that the maximum distance between the two domains is small enough so that $\Og_h$ is a sufficiently good approximation to $\Og$. More details will be given later.

Let the active mesh be defined by
\begin{equation}
\cT_{h} := \cT_h( \Omega_h) 
\end{equation}
i.e., the sub-mesh consisting of elements that intersect 
$\Omega_h$, and let 
\begin{equation}
\triangle_h := \triangle_h( \Omega_h).
\end{equation}
Since $\partial \Og_h$
cut the active mesh $\cT_h$ in an arbitrary fashion, we denote by $\cT_h^b$ the set of elements that are ``cut" by $\partial \Og_h$, i.e.,
\[
\cT_h^b = \{ K \in \cT_{0,h}: K \cap \partial \Og_h \neq \emptyset\} \subset \cT_h.
\]
We further assume that $\Omega_h$ is constructed in such a 
way that for each $K  \in \cT_{h}^b$, the intersection $K\cap \partial \Omega_h$ is a subset of 
a $d-1$ dimensional hyperplane, i.e., a line segment in two dimensions. Under the assumptions on $\partial
  \Omega$, and if we also assume that $h$ is sufficiently small, then for any element
  $K \in \cT_h^b$ there
exists an element $K' \in \cT_h\setminus \cT_h^b$ such that
$\mbox{dist}(K,K') = O(h_K)$ where $O(\cdot)$ denotes the Landau
big-$O$.

Also we denote by $\cE$ the set of all facets of $\cT_h$, and by $\cE_I$ and $\cE_{\partial}$ the set of all interior and boundary facets with respect to $\cT_h$, respectively. It is obvious that $\cE_I \cup \cE_{\partial} = \cE$.
For each $F \in \cE$ denote by $\bfn_F$ an unit vector normal to $F$ and by $h_F$ (or $|F|$) the diameter of $F$. If $F \in \cE_{\partial}$, then $\bfn_F$ is the outward unit normal vector.
For each $K \in \cT_h$ denote by $h_K$ the diameter of $K$ and by $\cE_K$ the set of all facets of $K$.

On the boundary $\partial \Og_h$, let $\bfn_h$ be the outer normal to $\partial \Og_h$.
For each $\Omega_h$, we assume that, for $\delta_0$ small enough,
there exist a vector function 
$\bfnu_h:\partial \Omega_h \rightarrow \mathbb{R}^d$, $|\bfnu_h|=1$, and $\varrho_h:\partial \Omega_h \rightarrow \mathbb{R}$, 
such that the function,
$\bfp_h(\bfx,\varsigma):=\bfx + \varsigma \bfnu_h(\bfx)$, is well
defined and satisfies
$\bfp_h(\bfx,\varrho_h(\bfx)) \in \partial \Omega$ for all $\bfx
\in \partial \Omega_h$. Here $\varrho_h$ is the distance from the
approximate to the physical boundary in the direction $\bfnu_h$. The
mapping $\bfp_h$ allows us to impose the boundary data defined on the
physical boundary on the approximate boundary.
The existence of the vector-valued function $\bfnu_h$ is known to hold
on Lipschitz domains, see Grisvard \cite{Gris11}.
We further assume that 
\[
\bfp_h(\bfx,\varsigma)
\in U_{\delta_0}(\partial\Omega), \quad \forall \bfx \in \partial \Omega_h, \;  0 \le \varsigma \le \varrho_h(\bfx).
\]
For convenience,
we will drop the second argument, $\varsigma$, of $\bfp_h$ below whenever it takes the value $\varrho_h(\bfx)$ and then $\bfp_h$ denotes the map $\bfp_h:\partial \Omega_h \rightarrow \partial \Omega$.
Moreover, we assume that the following assumption is satisfied
\begin{equation}\label{eq:geomassum-a}
\| \varrho_h \|_{L^\infty(\partial \Omega_h \cap K)} \le  O(h_K) \quad
\forall \, K \in \cT_h^b.
\end{equation}
The above assumption immediately implies that $\partial \Og_h \cap K$ is within the distance of $O(h_K)$ of $\partial \Og$,
i.e.,
\begin{equation}
\| \varrho \|_{L^\infty(\partial \Omega_h \cap K)} \le  O(h_K) \quad
\forall \, K \in \cT_h^b.
\end{equation}
This assumption is necessary for the constant in the a posteriori
error estimates to be independent of the geometry/mesh
configuration see \cite{EburmanChe-2020}. It is however not enough to guarantee the optimal a priori
error estimates, which requires $\| \varrho \|_{L^\infty(\partial \Omega_h \cap K)} \le O(h_K^2)$, 
and $\| \bfn \circ \bfp_h - \bfn_h \|_{L^\infty(\partial \Omega_h \cap K)} \le O(h_K)$, 
see \cite{BHL18}. 
It is noted that in theory we can assume neither $\Omega_h \subset \Omega$ nor  $\Omega \subset
\Omega_h$. However, for simplicity, we assume $\Og \subset \Og_h$. 
This will help skip the analysis for the a posteriori error estimation on the part of data approximation error of $f$. Such error can be eventually ignored in the computation since it is of higher order.

\subsection{The Cut Finite Element Method} 
In this subsection we recall the CutFEM introduced in \cite{BH12}. We begin with some 
necessary notation. 
For any $F\in \cE_I$, let $K_F^+$ and $K_F^-$ be those two elements sharing $F$ as a common facet such that the outer normal of $K_F^+$ coincides with $\bn_F$.
For any discontinuous function $v$, define the jump of $v$ across the facet $F$ by 
\[
\jump{v}|_F := v^+_F - v^-_F \quad \mbox{and} \quad 
v_F^{\pm}(\bfx)
= \lim \limits_{s \rightarrow 0^+} 
v(\bfx \mp s \bfn_F).
\]
The set of facets associated with cut elements is defined by 
\[ 
\cE_g := \{ F \in \cE_I \,:\, (K_F^+ \cup K_F^-) \cap \partial \Og_h \neq \emptyset\}.
\]
The index $g$ above refers to the ghost penalization, defined on every $F\in \cE_g$.

For each $K \in \cT_h$, we define a sign function $\mathfrak{s}_K$ defined on $\cE_K$ such that
\[
	\mathfrak{s}_K(F) = \begin{cases}
	1 \qquad \mbox{if } \bn_F = \bn_K|_F,\\
	-1 \qquad \mbox{if } \bn_F = -\bn_K|_F.
	\end{cases}
\]
The conforming linear finite element space is then defined as
\begin{equation}
CG_{h} :=\{ v \in H^1(\triangle_h): v|_K \in \mathbb{P}_1(K) \quad \forall \, K \in \cT_{h} \}.
\end{equation}
We also define the following forms:

\begin{equation}\label{forms}
\begin{split}
a_0(w,v) &:= (\nabla w,\nabla v)_{\Omega_h} 
- \left<\partial_{\bfn_h} w,v\right>_{\partial \Og_h} 
- \left<w, \partial_{\bfn_h} v\right>_{\partial \Og_h} 
+   \sum_{K \in \cT_h^b} \dfrac{\beta}{h_K}\left< w,v\right>_{\Gamma_K},\\
j_h(w,v) &:= \gamma\sum_{F \in \cE_g}  h_F \left< \jump{\partial_{\bfn_F} w},\jump{\partial_{\bfn_F} v}\right>_F,\\
a_h(w,v)& := a_0(w,v) + j_h(w,v), 
\\
l_h(v) &:= (f,v)_{\Omega_h} 
- \left<g_h , \partial_{\bfn_h} v\right>_{\partial \Og_h} 
+ \sum_{K \in \cT_h^b} \dfrac{\beta}{h_K}\left<g_h,v\right>_{\Gamma_K} ,
\end{split}
\end{equation}
where $\Gamma_K = K \cap \partial \Og_h$, $\partial_{\bfn_h} := \bfn_h \cdot \nabla $,  $\gamma$ and $\beta$ are positive constants, $g_{h} $ is an approximation of $g$ defined on $\partial \Og_h$. A natural choice is that $g_h(\bfx) = g \circ \bfp_h$. 
Note that we only assumed that $\Og \subset \Og_h$. In $\Omega_h \setminus
  \Omega$ where $f$ is not originally defined,  $f\vert_{\Omega_h \setminus
  \Omega}$ is defined to be an appropriate extension. 
 
\begin{remark}
The stabilizing term $j_{h}(w,v)$, which is the so-called ghost penalty term, is introduced to extend the coercivity of $a_h(\cdot,\cdot)$ to all of $\triangle_h$, see  \cite{Bu10,MasLarLog14}. Thanks to this property, one may prove that the condition number of the linear system is uniformly bounded regardless of the arbitrary boundary-mesh intersection.
\end{remark}

\begin{remark}
In order to guarantee the coercivity of the bilinear form $a_h(\cdot,\cdot)$ in \cref{forms}, $\beta$ has to be chosen large enough.
\end{remark}

The finite element method is then to find  $u_h \in CG_h$ such that 
\begin{equation}\label{eq:fem}
a_h(u_h, v) = l_h(v) \quad \forall \,v \in CG_h
\end{equation}
where $a_h$ and $l_h$ are defined in \cref{forms}. 

For $v \in H^1(\triangle_h)$, define the continuous and discrete energy norms
respectively by
\begin{align}\label{def:ene_norm}
\tn v \tn^2 &:= 
\|\nabla v\|^2_{\Omega} 
+\|\partial_{\bfn}v \|^2_{H^{-1/2}(\partial \Omega)} 
+ \|h^{-\frac12} v\|^2_{\partial \Omega}
\end{align}
and
\begin{align}\label{def:ene_norm_h}
\tn v \tn_h^2 &:= 
\|\nabla v\|^2_{\Omega_h} 
+\sum_{K \in \cT_h^b} h_K \|\partial_{\bfn_h} v \|^2_{\Gamma_K} 
+ \sum_{K \in \cT_h^b}h_K^{-1}\|v\|^2_{\Gamma_K}+j_h(v,v).
\end{align}

In \cref{def:ene_norm}, $h$ denotes the piecewise constant mesh size function.
From \cite{Bu10}, we have for $\beta$ sufficiently large the following coercivity result,
\begin{equation}\label{coecivity}
	 a_h(v, v) \ge C \tnorm{v}_h^2 \quad \forall v \in CG_h,
\end{equation}
which, together with the uniform $\tn \cdot \tn_h$-continuity of $a_h(\cdot,\cdot)$ on $CG_h$,  implies that \cref{eq:fem} has a unique solution.

\subsection{Some important inequalities}
Below we list the well known trace and inverse inequalities \cite[Section 1.4.3]{DiPE12},
\begin{equation}\label{eq:standard_trace}
\|v\|_{\partial K} \lesssim h_K^{-1/2}\|v\|_{K} + h_K^{1/2}\|\nabla
v\|_K \quad \forall \, v \in H^1(K), \quad \forall \, K \in \cT_h,
\end{equation}
\begin{equation}\label{eq:discrete_trace}
h_K^{-\frac12} \|v_h\|_{\partial K} + \|\nabla v_h\|_K \lesssim h_K^{-1}\|v_h\|_{K} 
\quad \forall \, v_h \in \mathbb{P}_1(K), \quad \forall \, K \in \cT_h.
\end{equation}
The following irregular trace inequality can be found in \cite{HH02}
\begin{equation}\label{eq:boundary_trace}
	\|v_h\|_{\Gamma_K} \lesssim h_K^{-1/2}\|v_h\|_{K} +
        h_K^{1/2}\|\nabla v_h\|_K \quad \forall \, v_h \in \mathbb{P}_1(K),
        \quad \forall \, K \in \cT_h^b,
\end{equation}
where the hidden constant is independent of the boundary-mesh intersection.
 Here and below we use the notation 
$\lesssim$ to denote less or equal up to a generic constant that is independent of the
mesh-geometry configuration.

In the following Lemma, we also provide a Poincar\'e-type inequality for the boundary elements.
\begin{lemma}
Let $v \in H_{0}^1(\Og)$. Then for any $K$ such that $K \cap \partial \Og  \neq \emptyset$ or $K \cap \partial \Og_h  \neq \emptyset$,
there exists a local convex neighborhood $\mathcal{S}_K$ of $K$ such that $v$ vanishes 
on a non-empty subset of $\partial S_K$ and
\begin{equation}\label{poincare}
	\|v\|_{K} \lesssim h_K\| \nabla v\|_{\mathcal{S}_K},
\end{equation}
where we defined $v$ outside $\Og$ using the trivial extension $v\vert_{\Og_0 \setminus \Og} = 0$.
\end{lemma}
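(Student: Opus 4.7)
The plan is to deduce this from the standard Friedrichs--Poincar\'e inequality on a convex domain where the test function vanishes on a portion of the boundary with positive $(d{-}1)$-measure. Since $v \in H_{0}^1(\Omega)$ and is extended by zero outside $\Omega$, the extension $\tilde v$ lies in $H^1(\Omega_0)$ and is identically zero on $\Omega_0 \setminus \Omega$. In particular $\tilde v$ vanishes on any open subset of $\Omega_0 \setminus \bar\Omega$, so it suffices to build a convex set $\mathcal{S}_K \supset K$ of diameter $O(h_K)$ whose boundary meets $\Omega_0 \setminus \bar\Omega$ in a set of positive surface measure.

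For the construction, first observe that assumption~(2.4), together with $\Omega \subset \Omega_h$, implies that $\partial\Omega$ lies within $O(h_K)$ of $K$ for every $K \in \cT_h^b$, and an analogous statement holds for the other boundary elements in the hypothesis of the lemma. Because $\partial \Omega$ is a union of smooth surfaces $\Gamma_i$ with a Lipschitz overall boundary, a uniform exterior ball/cone condition holds: for $h_K$ sufficiently small there exists a ball $B_K \subset \Omega_0 \setminus \bar\Omega$ of radius $c\,h_K$ with $\mathrm{dist}(B_K,K) \leq C h_K$, where $c, C$ depend only on the geometry of $\partial \Omega$ (near corners one shrinks $c$, but keeps it uniform in $K$). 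Define $\mathcal{S}_K := \mathrm{conv}(K \cup B_K)$; it is convex by construction, contains $K$, and has diameter $O(h_K)$.

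Next, identify the vanishing piece of $\partial \mathcal{S}_K$. The portion of $\partial B_K$ that is not separated from infinity by $K$ forms a spherical cap $\Gamma_K \subset \partial \mathcal{S}_K$ of $(d{-}1)$-measure $\gtrsim h_K^{d-1}$, and $\Gamma_K$ lies in the open set $\Omega_0 \setminus \bar\Omega$, so $\tilde v|_{\Gamma_K} = 0$ in the trace sense. Applying the standard Poincar\'e inequality on the convex domain $\mathcal{S}_K$ with homogeneous Dirichlet data on $\Gamma_K$ yields
\begin{equation*}
\|v\|_K \;\leq\; \|\tilde v\|_{\mathcal{S}_K} \;\lesssim\; \mathrm{diam}(\mathcal{S}_K)\,\|\nabla \tilde v\|_{\mathcal{S}_K} \;\lesssim\; h_K\,\|\nabla v\|_{\mathcal{S}_K},
\end{equation*}
where in the last step we used that $\nabla \tilde v = 0$ on $\Omega_0 \setminus \Omega$.

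The main obstacle is proving that the Poincar\'e constant on $\mathcal{S}_K$ can be chosen independently of the mesh--geometry configuration. This reduces to showing that the ratio $|\Gamma_K|/|\partial \mathcal{S}_K|$ and the shape-regularity of $\mathcal{S}_K$ admit uniform bounds as $K$ varies among boundary elements; both follow from shape-regularity of $\cT_{0,h}$ and the uniform exterior ball radius $c\,h_K$ guaranteed by the piecewise smooth, Lipschitz nature of $\partial \Omega$. Corner elements of $\partial \Omega$ are the delicate case and require $h_K$ small enough that $B_K$ fits in a single exterior sector, after which the argument is unchanged.
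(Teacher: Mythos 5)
The paper does not actually prove this lemma in-text---it defers entirely to the cited reference [EburmanChe-2020]---but your argument is correct and is essentially the standard one used for this kind of cut-element Poincar\'e inequality: extend $v$ by zero, attach to $K$ an exterior ball of radius $\sim h_K$ supplied by the uniform exterior cone condition of the Lipschitz, piecewise smooth boundary $\partial\Omega$, and apply a scaled Friedrichs inequality on the convex hull $\mathcal{S}_K$, whose constant is uniform because the chunkiness of $\mathcal{S}_K$ and the relative $(d-1)$-measure of the vanishing spherical cap are both controlled by shape regularity and the exterior ball radius. The two delicate points (corners of $\partial\Omega$ requiring $h_K$ small enough, and uniformity of the Poincar\'e constant over all boundary elements) are exactly the ones that need care, and you address both, so I see no gap.
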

The proof of the lemma can be found in \cite{EburmanChe-2020}.

\section{Mixed formulation}\label{mixedProblem}
In this section, we introduce an auxiliary mixed formulation for the cut finite element method. The aim is not to solve the global mixed problem. Instead, our goal is to establish the connections between the mixed and CutFEM formulation, and, with the help of those connections, to locally recover a conservative flux in the $H(div)$ space. The idea generates from \cite{becker2016local} in which classical finite element methods with fitted meshes were studied.

Define  the discontinuous finite element space on $\triangle_h$ by
\begin{equation}
DG_{h} :=\{ v \in L^2(\triangle_h): v|_K \in \mathbb{P}_1(K) \; \forall \, K \in \cT_{h}
\}.
\end{equation} 
Also define the  average operator:
\[
	\{w\} = \begin{cases} 0.5 ( w_F^+ +  w_F^-), & F \in \cE_I,\\
	w ,& F \in  \cE_{\partial}.
	\end{cases}
 \]

We also denote by $\cN$ the set of all vertices in $\cT_h$,  and by $\cN_I$ and $\cN_{\partial}$ the sets of all interior and boundary vertices on $\cT_h$. Obviously, we have $\cN_I \cap \cN_{\partial} = \cN$.
For each $N \in \cN$, define $\cT_N$ and $\cE_N $ be the sets of all elements and all facets sharing $N$ as a vertex, respectively. For each $N \in \cN$, we define a signed function $\mathfrak{s}_N$ on $\cE_N$ such that
$\mathfrak{s}_N (F) = 1$ if $\bn_F$ is oriented counter-clockwise in $\cT_N$, otherwise $\mathfrak{s}_N (F) = -1$.
We further define the following space:

\[
	M_h =\left \{\mu \in L^2(\cE_I) : \mu|_F \in \mathbb{P}_1(F) \; \forall \, F \in \cE_I, 
\sum_{F \in \cE_N} \mathfrak{s}_N(F) h_F \mu|_F(N)  =0 \; \forall N \in \cN_I \right\}.
\]
Here  $\mathbb{P}_1(F)$ is the space of linear functions defined on $F \in \cE$.
For the sake of brevity, we use the following notations for piecewise integration,

\[
	\int_{\cT_h} = \sum_{K\in \cT_h} \int_K, \quad \int_{\tilde \cE} = \sum_{ F \in \tilde\cE } \int_F,
\]
where $\tilde \cE$ is a subset of $\cE$.
We now define semi-norms and norms on the discrete spaces defined above:
\begin{equation}
\begin{split}
	|v|_{1,h} =& \left( \int_{\cT_h} |\nabla v|^2\right)^{1/2}, \quad \forall \,v \in DG_h,\\
	\tnorm{v}_{h,*} =&\left( |v|_{1,h}^2 +\sum_{K \in \cT_h^b} h_K \|\partial_{\bfn_h} v \|^2_{\Gamma_K} 
+ \sum_{K \in \cT_h^b}h_K^{-1}\|v\|^2_{\Gamma_K}+ \int_{\cE_I} h_F^{-1} \jump{v}^2 \,ds \right)^{1/2}, \forall v \in DG_h,\\ 
	 \| \mu\|_{M_h} =  &\left( \int_{\cE_I} h_F \mu ^2 \,ds \right)^{1/2}, \quad \forall \,\mu \in M_h.
\end{split}
\end{equation}

The auxiliary mixed formulation is defined as follows:
find $(u_h, \theta_h) \in DG_h \times M_h$ such that
\begin{equation}\label{mixed-formulation}
	\begin{split}
	&\tilde a_h (u_h, w_h) + b(\theta_h, w_h)= l_h(w_h), \quad \forall w_h \in DG_h, \\
	&b(\mu_h, u_h) = 0, \quad \forall \mu_h \in M_h.
	\end{split}
\end{equation}
where
\begin{equation}
\begin{split}
	\tilde{a}_h(v,w) =& a_h(v,w) - \left< \{ \partial_{\bn_F} v\}, \jump{w} \right>_{\cE_{I} \cap \Omega_h}- \left< \{ \partial_{\bn_F} w\}, \jump{v} \right>_{\cE^{I} \cap \Omega_h},\\
	b(\mu, v) =& \sum_{F \in \cE_I}\dfrac{h_F}{2} \sum_{N \in \cN\cap F} \mu_F(N) \jump{v}(N)
	\approx \int_{\cE_I} \mu \jump{v} \,ds, 
\end{split}
\end{equation}
where $\mu_F = \mu|_F$ and $\cN\cap F$ is the set of vertices of $F$.
It is easy to check that the discrete kernel of $b(\cdot)$ coincides exactly with the CutFEM space, i.e.,
\[
	\mbox{ker}(b) := \{w_h: b(\mu_h, w_h)=0 \quad \forall \, \mu_h \in M_h\} = CG_h.
\]
Indeed, any function $w_h$ in $CG_h$ satisfies $\jump{w_h}|_F =0$ for any $F\in \cE_I$, and hence belongs to  $\mbox{ker}(b) $.  Reciprocally, for any $w_h$ in $\mbox{ker}(b)$, we can choose $\mu_h\in M_h$ defined by $(\mu_h)_F =h_F^{-1} \jump{w_h}|_F $ for any $F\in \cE_I$, which yields $\jump{w_h}|_F  =0$ and hence, $w_h \in CG_h$.
It is then obvious to see that the solution $u_h$ for \cref{eq:fem} coincides with the solution for \cref{mixed-formulation} if it is well defined. 

\subsection{Well-posedness of the mixed finite element approximation}

\begin{lemma}[continuity]\label{continuity}
We have the following continuity results for the bilinear forms:
\begin{equation*}
\begin{split}
	b(\mu, v) &\lesssim  \| \mu\|_{M_h} \tnorm{v}_{h,*}, \quad \forall \mu \in M_h, \forall v \in DG_h,\\
	\tilde{a}_h(v,w) &\lesssim  \tnorm{v}_{h,*} \tnorm{w}_{h,*}, \quad \forall v,w \in DG_h.\\
	\end{split}
\end{equation*}
\end{lemma}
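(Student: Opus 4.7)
The plan is to bound both bilinear forms termwise by Cauchy--Schwarz, pairing mesh-size weights $h_F^{1/2}$ or $h_K^{1/2}$ on one factor against $h_F^{-1/2}$ or $h_K^{-1/2}$ on the other, and leveraging inverse/scaling inequalities that exploit every function in $DG_h$ being piecewise linear.

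For the continuity of $b$, the key observation is that on each $F\in\cE_I$ both $\mu|_F$ and $\jump{v}|_F$ lie in $\mathbb{P}_1(F)$. By norm equivalence on the two-dimensional space $\mathbb{P}_1(F)$ one has $\sum_{N\in\cN\cap F} p(N)^2 \simeq h_F^{-1}\|p\|_F^2$ for any $p \in \mathbb{P}_1(F)$, so the two-point formula defining $b$ satisfies the per-facet estimate
$$\Bigl| \frac{h_F}{2}\sum_{N\in\cN\cap F}\mu_F(N)\jump{v}(N)\Bigr| \lesssim \|\mu\|_F \|\jump{v}\|_F = \bigl(h_F^{1/2}\|\mu\|_F\bigr)\bigl(h_F^{-1/2}\|\jump{v}\|_F\bigr).$$
Summing over $F\in\cE_I$ and applying a discrete Cauchy--Schwarz then produces the bound by $\|\mu\|_{M_h}\tnorm{v}_{h,*}$ directly from the definitions of the two norms.

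For the continuity of $\tilde a_h$, I would split the form into its seven constituent pieces: the bulk term $(\nabla v,\nabla w)_{\Omega_h}$, the two Nitsche consistency traces on $\partial\Omega_h$, the Nitsche penalty $\sum_K(\beta/h_K)\langle v,w\rangle_{\Gamma_K}$, the ghost penalty $j_h(v,w)$, and the two average-jump consistency corrections on $\cE_I\cap\Omega_h$. The bulk term is immediate because $\Omega_h\subset\triangle_h$ gives $\|\nabla v\|_{\Omega_h}\le|v|_{1,h}$. The Nitsche consistency traces are estimated cellwise by pairing $h_K^{1/2}\|\partial_{\bfn_h}v\|_{\Gamma_K}$ against $h_K^{-1/2}\|w\|_{\Gamma_K}$, and the Nitsche penalty by distributing $h_K^{-1}$ as two factors $h_K^{-1/2}$; in both cases a discrete Cauchy--Schwarz against the corresponding pieces of $\tnorm{\cdot}_{h,*}$ closes the argument. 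For the average-jump corrections, piecewise linearity of $v$ makes $\nabla v|_K$ constant, so shape regularity and scaling yield $h_F\|\{\partial_{\bfn_F}v\}\|_F^2 \lesssim \|\nabla v\|_{K_F^+}^2+\|\nabla v\|_{K_F^-}^2$, and this pairs with $h_F^{-1/2}\|\jump{w}\|_F$ exactly as above.

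The one point that needs a little care---and what I would call the main obstacle---is the ghost penalty $j_h(v,w)$: it does not appear in $\tnorm{\cdot}_{h,*}$, so it cannot be absorbed directly. I handle it by exploiting once more that $v\in DG_h$ is piecewise linear, so the gradient is elementwise constant; an identical scaling argument gives $h_F\|\jump{\partial_{\bfn_F}v}\|_F^2 \lesssim \|\nabla v\|_{K_F^+}^2+\|\nabla v\|_{K_F^-}^2$, and summing over $F\in\cE_g$ together with Cauchy--Schwarz yields $j_h(v,w)\lesssim |v|_{1,h}\,|w|_{1,h} \le \tnorm{v}_{h,*}\tnorm{w}_{h,*}$. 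Combining all seven contributions completes the proof.
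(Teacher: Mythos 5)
Your proposal is correct and follows essentially the same route as the paper: facetwise Cauchy--Schwarz with the $h_F^{1/2}$/$h_F^{-1/2}$ weight split for $b$, and for $\tilde a_h$ the key observation that $j_h(v,v)\lesssim |v|_{1,h}^2$ for piecewise linears (so the ghost penalty is absorbed into $\tnorm{\cdot}_{h,*}$) together with the scaling bound $h_F\|\{\partial_{\bfn_F}v\}\|_F^2\lesssim\|\nabla v\|_{K_F^+}^2+\|\nabla v\|_{K_F^-}^2$ for the average--jump terms. The only cosmetic difference is that the paper packages the Nitsche terms at once via $\tnorm{\cdot}_h\lesssim\tnorm{\cdot}_{h,*}$ rather than termwise, and your explicit norm-equivalence justification of the nodal quadrature bound for $b$ is a detail the paper leaves implicit.
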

\begin{proof}
The proof of the first assertion is trivial by Cauchy-Schwartz and the definitions of the norms:
\[
	b(\mu, v) \lesssim \sum_{F \in \cE_I} \| h_F^{1/2}\mu\|_{F} \|h_F^{-1/2} \jump{v}\|_{F} \le 
	\|\mu\|_{M_h}\tnorm{v}_{h,*}.
\]
As regards the second one, we first note that $j_h(v,v)\lesssim \int_{\cT_h} |\nabla v|^2$  for any $v\in DG_h$, so we clearly have that $\tnorm{\cdot}_{h} \lesssim \tnorm{\cdot}_{h,*} $ on $DG_h$, hence the continuity of  $a_h(\cdot,\cdot)$. The remaining terms of  $\tilde{a}_h(\cdot,\cdot)$ are bounded as  follows:
\begin{equation*}
\begin{split}
\vert\left< \{ \partial_{\bn_F} v\}, \jump{w} \right>_{\cE_I\cap \Omega_h}\vert &\le \left( \sum_{F\in \cE_I} h_F \| \{ \partial_{\bn_F} v\}\|_{F}^2\right)^{1/2}  \left( \sum_{F\in \cE_I} h_F^{-1}\|\jump{w}\|_{F}^2\right)^{1/2}\\
&\lesssim \left(  \int_{\cT_h} |\nabla v|^2\right)^{1/2}  \left( \int_{\cE_I} h_F^{-1} \jump{w}^2 \right)^{1/2}\qquad \forall v,w\in DG_h,
\end{split}
\end{equation*}
which ensures the continuity of $\tilde{a}_h(\cdot,\cdot)$.
\end{proof}

\begin{lemma}[inf-sup condition]
We also have the inf-sup result for the bilinear form: 
\begin{equation}\label{inf-sup}
	\inf_{\mu \in M_h} \sup_{v \in DG_h} \dfrac{b(\mu, v)}{\tnorm{v}_{h,*}   \| \mu\|_{M_h}} \ge C,
\end{equation}
where the constant is independent of the mesh size and domain-mesh intersection.
\end{lemma}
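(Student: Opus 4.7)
The plan is to prove \eqref{inf-sup} by constructing, for each $\mu \in M_h$, an explicit $v = v(\mu) \in DG_h$ that realizes the supremum up to constants, i.e., a Fortin-type operator. The construction is local and vertex-based: I prescribe the vertex values $v_K(N) := v|_K(N)$ for each pair $(K,N)$ with $N \in \cN \cap K$ so that
$$\jump{v}|_F(N) = h_F \mu_F(N), \qquad \forall F \in \cE_I, \; \forall N \in \cN \cap F.$$
Around an interior vertex $N \in \cN_I$, I order the star $\cT_N = \{K_1,\dots,K_m\}$ cyclically and let $F_i \in \cE_N$ be the facet separating $K_i$ and $K_{i+1}$ (indices mod $m$). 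The jump conditions read $v_{K_{i+1}}(N) - v_{K_i}(N) = \pm h_{F_i}\mu_{F_i}(N)$, and summing around the star one sees that the solvability condition is exactly $\sum_{F\in\cE_N} \mathfrak{s}_N(F) h_F \mu_F(N) = 0$, which is built into the definition of $M_h$. I remove the remaining additive ambiguity by requiring $\sum_i v_{K_i}(N) = 0$, which gives $|v_{K_i}(N)| \lesssim \sum_{F\in\cE_N} h_F |\mu_F(N)|$ by shape regularity. At a boundary vertex $N \in \cN_{\partial}$, the star is an open chain, no compatibility is needed, and a solution with the same size bound is obtained by propagation from one end.

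The lower bound on $b(\mu,v)$ is then a direct computation:
$$b(\mu, v) = \sum_{F\in\cE_I} \frac{h_F}{2} \sum_{N\in\cN\cap F} \mu_F(N)\cdot h_F\mu_F(N) = \frac{1}{2} \sum_{F\in\cE_I} h_F^2 \sum_{N\in\cN\cap F} \mu_F(N)^2.$$
Norm equivalence on the one-dimensional polynomial space $\mathbb{P}_1(F)$ gives $\sum_{N\in F} \mu_F(N)^2 \sim h_F^{-1}\|\mu\|_F^2$, whence $b(\mu,v) \gtrsim \|\mu\|_{M_h}^2$.

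For the upper bound $\tnorm{v}_{h,*} \lesssim \|\mu\|_{M_h}$, I treat the four contributions to the norm separately. The scaling $\|\nabla v\|_K^2 \lesssim \sum_{N \in \cN \cap K} v_K(N)^2$ valid in two dimensions for $v \in \mathbb{P}_1(K)$, combined with the vertex-value bound above, yields
$$|v|_{1,h}^2 \lesssim \sum_{N\in\cN}\sum_{K\in\cT_N} v_K(N)^2 \lesssim \sum_{N\in\cN}\sum_{F\in\cE_N} h_F^2 \mu_F(N)^2 \sim \|\mu\|_{M_h}^2.$$
The jump term is handled directly since $\jump{v}|_F$ is the linear function on $F$ whose nodal values are the prescribed $h_F\mu_F(N)$, so $\int_{\cE_I} h_F^{-1}\jump{v}^2\,ds \sim \sum_F h_F^2 \sum_{N\in F}\mu_F(N)^2 \sim \|\mu\|_{M_h}^2$. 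For the cut-element traces, since $\partial_{\bfn_h} v$ is constant on each $K \in \cT_h^b$, applying the irregular trace inequality \eqref{eq:boundary_trace} to $v|_K$ and $\partial_{\bfn_h} v|_K$ gives $h_K\|\partial_{\bfn_h} v\|_{\Gamma_K}^2 + h_K^{-1}\|v\|_{\Gamma_K}^2 \lesssim \|\nabla v\|_K^2 + h_K^{-2}\|v\|_K^2$, and the equivalence $\|v\|_K^2 \lesssim h_K^2 \sum_{N \in \cN \cap K} v_K(N)^2$ absorbs the second term into the previous estimate. Combining the lower and upper bounds and taking the supremum over $DG_h$ yields the claimed constant.

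The main obstacles I anticipate are (i) careful bookkeeping of the sign function $\mathfrak{s}_N$ so the compatibility condition defining $M_h$ matches the cyclic closure of the local jump system around each interior vertex, and (ii) the CutFEM-specific cut-boundary contributions to $\tnorm{\cdot}_{h,*}$, which are absent from the fitted-mesh argument of Becker et al.\ and whose control must remain insensitive to the arbitrary boundary-mesh intersection. It is precisely for (ii) that the irregular trace inequality \eqref{eq:boundary_trace}, whose constant is independent of the cut configuration, is essential here.
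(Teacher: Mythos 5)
Your proposal is correct and follows essentially the same route as the paper: an explicit vertex-based construction of $v$ with $\jump{v}|_F(N)=h_F\mu_F(N)$ around each vertex star (the compatibility around interior vertices being exactly the constraint built into $M_h$), followed by the identity for $b(\mu,v)$, finite-dimensional norm equivalences for the volume and jump terms, and the cut-independent trace inequality \eqref{eq:boundary_trace} for the $\Gamma_K$ contributions. The only (immaterial) difference is your normalization $\sum_i v_{K_i}(N)=0$ in place of the paper's choice $v_{K_1}(N)=0$.
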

\begin{proof}
We prove by construction. To define a linear function $v \in DG_h$, it suffices to define 
$v_K(N):= v|_K(N)$ for all $K \in \cT_h$ and for all $N \in \cN_K$.
For each $N \in \cN_I$, we let $\{K_i\}_{i=1}^{n_N}$ be the clockwise oriented elements in $\cT_N$ where $n_N \ge 1$ is the number of elements in $\cT_N$. We also let $F_i = K_{i-1} \cap K_{i}, i=1, \cdots, n_N (K_{0} = K_{n_N})$. We let $v_{K_1}(N) =0$. The rest are defined such that
\begin{equation}\label{inf-sup-a}
v_{K_{i}}(N) - v_{K_{i-1}}(N) = h_{F_i} \mathfrak{s}_N (F_i)  \mu_{F_i}(N),\qquad  i=2, \cdots, n_N.
\end{equation}
It is easy to check that \cref{inf-sup-a} is compatible. Moreover, it is easy to check that 
\begin{equation}\label{inf-sup-b}
\jump{v}|_{F_i}(N) = h_{F_i}\mu_{F_i}(N), \qquad i=1, \cdots, n_N.
\end{equation}
For $N \in \cN_{\partial}$ such that $\cE_N \cap \cE_I\neq \emptyset$, we define $v_{K_i}, i=1, \cdots, n_N$ in the same way (where we assume that $K_1$ has a boundary facet). 
Note that there are only $n_N-1$ interior facets in this case. And we also have
\begin{equation}\label{inf-sup-c}
\jump{v}|_{F_i}(N) =h_{F_i} \mu_{F_i}(N), \qquad i=2, \cdots, n_N.
\end{equation}
Finally, for $N \in \cN_{\partial}$ such that $\cE_N \cap \cE_I =\emptyset$, we simply put $v_K(N)=0$.
Combining \cref{inf-sup-b} and \cref{inf-sup-c} gives that
 \begin{equation}\label{inf-sup-d}
\jump{v}|_F = h_F\mu_F \quad \forall \,F \in \cE_I.
\end{equation}
We then have that
 \begin{equation}\label{inf-sup-d1}
	b(\mu, v) = \sum_{F \in \cE_I}  \sum_{N \in \cN\cap F}\dfrac{h_F^2}{2}\mu_F(N)^2   \gtrsim  \| \mu\|_{M_h}^2.
\end{equation}
The last inequality follows from the equivalence of norms in a finite dimensional space.
Immediately from \cref{inf-sup-c}, we also have the following bound:
\begin{equation}\label{bound-v-k}
	\sum_{i=1}^{n_N} v_{K_i}(N)^2 \lesssim \sum_{F \in \cE_N \cap \cE_I} h_F^2 \mu_{F}(N)^2.
\end{equation}
By a direct computation, \cref{bound-v-k}, and norm equivalence in a finite dimensional space, we have
\begin{equation}\label{bound-v-1}
\begin{split}
	 \sum_{K \in \cT_h} \|\nabla v\|_{K}^2   &\lesssim  \sum_{K \in \cT_h} h_K\sum_{N \in \cN_K}v_K(N)^2 
	=  \sum_{N \in \cN} \sum_{K \in \cT_N}h_Kv_K(N)^2 \\
	&\lesssim \sum_{N \in \cN} \sum_{F \in \cE_N \cap \cE_I} h_F^2 \mu_F(N)^2
	\lesssim  \| \mu\|_{M_h}^2.
\end{split}
\end{equation}
Immediately, we also have, by using \cref{eq:boundary_trace} for the last estimate,
\begin{equation}\label{bound-v-2}
\begin{split}
	\int_{\cE_I} h_F^{-1} \jump{v}^2 \,ds
	&=\int_{\cE_I}  h_F \mu_F^2 \,ds = \| \mu\|_{M_h}^2,\\
	\sum_{K \in \cT_h^b} \int_{\Gamma_K} h_K^{-1} v^2 \,ds   
	&\lesssim \sum_{K \in \cT_h^b}  h_K\sum_{N \in \cN_K}v_K(N)^2\lesssim  \| \mu\|_{M_h}^2.
\end{split}
\end{equation}
We also have, thanks to \cref{eq:boundary_trace}, 
\begin{equation}\label{bound-v-3}
 h_K \|\partial_{\bn_h} v\|^2_{\Gamma_K} \,ds\lesssim \|\nabla v\|_{K}^2,\qquad \forall K \in \cT_h^b.
\end{equation}
Note that the involved constants do not depend on the interface-mesh intersection.
Combing \cref{bound-v-1}, \cref{bound-v-2} and \cref{bound-v-3} yields
 \begin{equation}\label{inf-sup-e}
	\tnorm{v}_{h,*} \lesssim \| \mu\|_{M_h}.
 \end{equation}
 \cref{inf-sup} is then the direct consequence of \cref{inf-sup-d1} and \cref{inf-sup-e}. This completes the proof of the lemma.
\end{proof}

By \cref{coecivity}, we also have the uniform coercivity of $\tilde{a}_h(\cdot,\cdot)$ on $\mbox{ker}(b)=CG_h$ with respect to the norm $\tnorm{\cdot}_h $.

\begin{lemma}
	The mixed formulation \cref{mixed-formulation} is well posed. Moreover, the solution $u_h$ in 
	\cref{mixed-formulation}  coincides with the CutFEM solution of \cref{eq:fem}. 
\end{lemma}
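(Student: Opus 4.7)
The plan is to invoke the standard Brezzi theorem for saddle-point problems. The three ingredients required are the continuity of the bilinear forms $\tilde a_h(\cdot,\cdot)$ and $b(\cdot,\cdot)$, the inf-sup condition on $b(\cdot,\cdot)$, and the coercivity of $\tilde a_h(\cdot,\cdot)$ on the discrete kernel $\mathrm{ker}(b)$. The first two are already provided by \cref{continuity} and the inf-sup lemma, both uniformly with respect to the mesh size and the boundary-mesh configuration.

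For the kernel coercivity, I would exploit the key identification $\mathrm{ker}(b)=CG_h$ that was observed right after the definition of the mixed formulation. For any $v\in CG_h$, the interior jumps $\jump{v}|_F$ vanish, so the two correction terms in $\tilde a_h(\cdot,\cdot)$ drop out and we obtain $\tilde a_h(v,v)=a_h(v,v)$. Likewise, on $CG_h$ the face-jump contribution in $\tnorm{\cdot}_{h,*}$ vanishes, so $\tnorm{v}_{h,*}=\tnorm{v}_h$. Applying the coercivity estimate \cref{coecivity} then yields $\tilde a_h(v,v)\gtrsim\tnorm{v}_{h,*}^2$ for all $v\in\mathrm{ker}(b)$, uniformly in $h$ and in the cut configuration. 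The three Brezzi conditions then deliver existence and uniqueness of $(u_h,\theta_h)\in DG_h\times M_h$.

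To identify the first component $u_h$ with the CutFEM solution of \cref{eq:fem}, I would argue as follows. The second equation of \cref{mixed-formulation} forces $u_h\in\mathrm{ker}(b)=CG_h$. Testing the first equation with an arbitrary $w_h\in CG_h\subset DG_h$, I would use that $\jump{w_h}|_F=0$ on every interior facet, which makes both correction terms in $\tilde a_h(u_h,w_h)$ vanish and also makes $b(\theta_h,w_h)=0$. The first equation then collapses to $a_h(u_h,w_h)=l_h(w_h)$ for every $w_h\in CG_h$, which is exactly \cref{eq:fem}. Since the CutFEM problem has a unique solution, the two $u_h$ coincide.

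The main subtlety to flag is not really an obstacle but a sanity check: one must verify that the correction terms added to pass from $a_h(\cdot,\cdot)$ to $\tilde a_h(\cdot,\cdot)$ do not destroy the Brezzi coercivity on $\mathrm{ker}(b)$, and this is why the observation $\mathrm{ker}(b)=CG_h$ is crucial. On the continuous subspace these correction terms simply disappear, so the usual Nitsche coercivity is preserved for free, and no new estimate needs to be proved.
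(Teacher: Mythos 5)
Your proposal is correct and follows essentially the same route as the paper: both rest on the Babuska--Brezzi theorem with the continuity of \cref{continuity}, the inf-sup condition \cref{inf-sup}, and the coercivity \cref{coecivity} restricted to $\mathrm{ker}(b)=CG_h$ (the paper merely packages this as a uniqueness argument for the square linear system, while you verify the three Brezzi hypotheses and then test the first equation of \cref{mixed-formulation} with $w_h\in CG_h$ to recover \cref{eq:fem}). One small imprecision: for $v\in CG_h$ the jump term in $\tnorm{v}_{h,*}$ does vanish, but $\tnorm{v}_{h,*}$ and $\tnorm{v}_h$ are not equal, since $|v|_{1,h}$ is taken over all of $\triangle_h$ whereas $\tnorm{\cdot}_h$ only sees $\|\nabla v\|_{\Omega_h}$ plus $j_h(v,v)$; the inequality $\tnorm{v}_{h,*}\lesssim\tnorm{v}_h$ on $CG_h$ still holds (this is exactly what the ghost penalty buys), so your kernel coercivity and hence the conclusion are unaffected.
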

\begin{proof}
The proof is standard and follows from the Babuska-Brezzi theorem.  We sketch the proof of the existence and uniqueness of the solution for the convenience of the readers.
It is enough to show the uniqueness of the solution of the linear square system. 
We prove by contradiction. Assume both $(u_h^1, \theta_h^1)$ and  $(u_h^2, \theta_h^2)$ are solutions to \cref{mixed-formulation}.
From the second equation in \cref{mixed-formulation}, we have that $(u_h^1 - u_h^2) \in CG_h$. Then we have
\[
	a_h(u_h^1 - u_h^2,w_h) =0 \qquad \forall \, w_h \in CG_h.
\]
By \cref{coecivity}, we have $(u_h^1 - u_h^2) \equiv 0$. 
Then we have $b(\theta_h^1 -\theta_h^2 , w_h)=0$ for all $w_h \in DG_h$. Finally by the inf-sup condition \cref{inf-sup}, we obtain $\theta_h^1 -\theta_h^2 \equiv 0$. Since the solution for  \cref{mixed-formulation} is unique, then the solution $u_h$ in \cref{mixed-formulation} must  coincide with the CutFEM solution of \cref{eq:fem}. 
This completes the proof of the lemma.
\end{proof}

\subsection{Local construction of $\theta_h$}
In this subsection, we aim to compute the solution $\theta_h$ to \cref{mixed-formulation} based on the solution $u_h$ of \cref{eq:fem} through solving local problems, following \cite{becker2016local}.
Firstly note that 
\begin{equation}\label{residual1}
	\begin{split}
	b(\theta_h, w) &= r(w):=l_h(w) - \tilde a_h(u_h, w) \quad \forall w \in DG_h.
	\end{split}
\end{equation}
Note that $r(w) =  0$ for any $w \in CG_h$.

Let $N \in \cN$ such that $\cE_N \cap \cE_I \neq \emptyset$.  We define $\theta_N \in M_h$ on $\cE_N \cap \cE_I$ such that, for each $K \in  \cT_N$,
\begin{equation}\label{local-construction}
	\begin{split}
	b(\theta_N, \lambda_N\chi_K) &= r(\lambda_N \chi_K)\\
	b(\theta_N, \lambda_M \chi_K) & = 0 \quad \mbox{if }  M\in \cN_K \mbox{ and }  M \neq N,
	\end{split}
\end{equation}
where $\lambda_M$ denotes the barycentric basis function corresponding to the vertex $M$. On $ \cE_I\setminus \cE_N $, we impose that $\theta_N$ is null.

Recall from the definition of $M_h$ that the condition $\theta_N \in M_h$ means that $\theta_N$ has to satisfy a constraint at any interior node. The last equation of \cref{local-construction} gives that $\theta_N|_{F}(M) =0$ for all $F\in \cE_N $ and $M\in F,\, M \neq N$, so the constraint is obviously satisfied at the node $M$, for $M\in \cN_I$. Hence,  $\theta_N$ only has to satisfy the constraint equation at the node $N$, for $N\in \cN_I$.

Also note that $ \sum \limits_{K \subset \cT_N}r(\lambda_N \chi_K)=r(\lambda_N)=0$. It is then easy to check that the system introduced by \cref{local-construction} is  compatible.

\begin{lemma}
 For any $N \in \cN$, the system \cref{local-construction} has a unique solution $\theta_N$ in $M_h$.
\end{lemma}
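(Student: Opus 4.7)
My plan is to prove uniqueness via a local adaptation of the inf-sup construction from the proof of \cref{inf-sup}, and to deduce existence from the fact that the resulting square-system count balances.

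First I will analyze the structure of \cref{local-construction}. The second family of equations forces $(\theta_N)_F(M) = 0$ for every $F \in \cE_N \cap \cE_I$ and its other vertex $M \neq N$, because the only facet contributing to $b(\theta_N, \lambda_M \chi_K)$ is $F = \overline{NM}$. Hence $\theta_N$ is determined by the $m_N := |\cE_N \cap \cE_I|$ values $c_F := (\theta_N)_F(N)$. When $N \in \cN_I$, the $M_h$-constraint at $N$ removes one of these degrees of freedom, while the $M_h$-constraints at the other interior vertices are automatically satisfied. On the equation side, the compatibility identity $\sum_{K \in \cT_N} r(\lambda_N \chi_K) = r(\lambda_N) = 0$, which holds because $\lambda_N \in CG_h$ and $r$ vanishes on $CG_h$, reduces the $n_N$ equations of the first family to $n_N - 1$ independent ones. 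In both the interior case ($m_N = n_N$) and the boundary case ($m_N = n_N - 1$) the free unknowns and independent equations balance at $n_N - 1$, so the local system is square.

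Next, I will establish uniqueness by testing against a tailored function. Suppose $\theta_N$ solves the homogeneous version of \cref{local-construction}. Applying to $\mu = \theta_N$ the construction used in the proof of \cref{inf-sup} yields $v \in DG_h$ with $\jump{v}|_F = h_F (\theta_N)_F$ for every $F \in \cE_I$. Because $(\theta_N)_F(P) = 0$ at every vertex $P \neq N$, the cumulative-sum recursion that defines $v$ produces $v_K(P) = 0$ whenever $P \neq N$, so the support of $v$ is confined to the fan $\cT_N$; when $N \in \cN_I$, the $M_h$-condition on $\theta_N$ is precisely what allows the cycle around $N$ to close consistently. Hence $v$ lies in $V_N := \mathrm{span}\{\lambda_M \chi_K : K \in \cT_N,\ M \in \cN_K\}$, and the homogeneous equations give $b(\theta_N, v) = 0$. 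Evaluating $b$ directly and using $(\theta_N)_F(P) = 0$ for $P \neq N$,
\[
	b(\theta_N, v) = \sum_{F \in \cE_N \cap \cE_I} \frac{h_F}{2}\, c_F \cdot h_F c_F = \frac{1}{2}\sum_{F \in \cE_N \cap \cE_I} h_F^2 c_F^2,
\]
so every $c_F$ vanishes and $\theta_N = 0$. Combined with the squareness of the system, this yields existence and uniqueness. The one delicate point is verifying that the global inf-sup construction genuinely restricts to $V_N$ when applied to $\theta_N$; this reduces to the observation that the nodal values of $\theta_N$ vanish away from $N$, which kills all cumulative sums at those vertices, and that in the interior case the $M_h$-constraint at $N$ is exactly the cycle-closure condition for the fan.
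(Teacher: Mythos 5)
Your proof is correct, but the key uniqueness step is genuinely different from the paper's. The paper works with the unconstrained local system and exhibits its kernel explicitly: it constructs the function $\Psi_N$ with $\Psi_N|_F(N)=h_F^{-1}\mathfrak{s}_N(F)$ and zero at the other vertices, checks that $\mathrm{span}\{\Psi_N\}$ is the kernel, and then observes that $\Psi_N$ violates the $M_h$-constraint $\sum_{F\in\cE_N}\mathfrak{s}_N(F)h_F\mu|_F(N)=0$, so that restricting to $M_h$ kills the kernel; for boundary vertices it simply counts $n_N-1$ unknowns against $n_N-1$ independent equations. You instead prove injectivity on the constrained space directly, by feeding $\mu=\theta_N$ into the test-function construction from the inf-sup proof and computing $b(\theta_N,v)=\tfrac12\sum_F h_F^2 c_F^2$; the observation that the cycle-closure condition of that construction at $N$ is exactly the $M_h$-constraint is the right one, and the reduction of $v$ to the fan $\cT_N$ because the nodal values of $\theta_N$ vanish away from $N$ is sound. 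What each approach buys: the paper's argument identifies the one-dimensional kernel explicitly, which makes transparent why the $M_h$-normalization is exactly the missing equation (and feeds directly into the explicit $n_N\times n_N$ system solved in the next subsection); your argument is a local instance of the global inf-sup machinery, so it avoids guessing the kernel element, settles the rank question without appeal to the later explicit computation, and implicitly yields the quantitative stability bound $\|\{\tilde\theta_N\}\|\lesssim\|\{r_N\}\|$ of \cref{estimate-r-N-to-theta-N} as a by-product. Both proofs conclude existence from the same dimension count matched against the single compatibility relation $\sum_{K\in\cT_N}r(\lambda_N\chi_K)=r(\lambda_N)=0$; your statement of this step is compressed but the ingredients (one row dependency on the left, matched by the vanishing sum on the right, plus injectivity of the square reduced system) are all present.
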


\begin{proof}
We first assume that $N \in \cN_I$.
Because of the compatibility condition, the system \cref{local-construction} a one-dimensional kernel. Indeed, let $\Psi_N $ defined on any  $F\in \cE_N $ by $\Psi_N|_{F}\in \mathbb P_1(F)$ and
\[
\begin{cases}\Psi_N|_{F}(N) =h_F^{-1} \mathfrak{s}_N(F),\\
\Psi_N|_{F}(M) =0, &\forall M\in F,\, M \neq N,
\end{cases}
\]
whereas on  $F\in\cE_I\setminus \cE_N$, we set $\Psi_N|_{F}\equiv 0$. 
It is then easy to check that for all $K \in \cT_N$,
\begin{equation}
	\begin{split}
	b(\Psi_N, \lambda_N\chi_K) &= 0,\\
	b(\Psi_N, \lambda_M \chi_K) & = 0 \quad \mbox{if } M \in \cN_K   \mbox{ and } M \neq N.
	\end{split}
\end{equation}
Thus $\mbox{span}\{\Psi_N\}$ is the kernel of the system \cref{local-construction}. However, it is obvious that 
$\Psi_N$ does not satisfy the constraint equation $\sum\limits_{F \in \cE_N} \mathfrak{s}_N(F) h_F  \mu|_{F}(N) =0$ for $N \in \cN_I$. 
Thus \cref{local-construction}  has a unique solution $\theta_N \in M_h$ under the constraint for $N \in \cN_I$.
For $N \in \cN_{\partial}$, note that there are $n_N-1$ interior facets. The solution is then unique since there are $n_N-1$ unknowns with $n_N-1$ independent equations. 
This completes the proof of the lemma.
\end{proof}

\begin{lemma}
	Let $\theta_N$ be defined in \cref{local-construction} and $\theta_h$ be the solution of \cref{mixed-formulation}. Then we have that
	\begin{equation}\label{equivalence}
	\theta_h = \sum_{N \in \cN} \theta_N.
	\end{equation}
\end{lemma}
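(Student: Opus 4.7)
The plan is to show that $\tilde\theta_h := \sum_{N \in \cN}\theta_N$ (extended by $\theta_N \equiv 0$ for those $N$ with $\cE_N \cap \cE_I = \emptyset$) satisfies the same mixed system as $\theta_h$, so that the uniqueness established in the previous lemma forces $\tilde\theta_h = \theta_h$. Since $M_h$ is a linear subspace of $L^2(\cE_I)$ and each $\theta_N \in M_h$ by construction, the sum $\tilde\theta_h$ lies in $M_h$ automatically. Because $u_h$ already solves the CutFEM problem \cref{eq:fem} and hence the second equation $b(\mu_h, u_h)=0$ of \cref{mixed-formulation}, the only thing left to verify is $b(\tilde\theta_h, w) = r(w)$ for every $w \in DG_h$, where $r$ is the residual defined in \cref{residual1}.

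By bilinearity it suffices to check this identity on the Lagrange basis $\{\lambda_M\chi_K : K \in \cT_h,\ M \in \cN_K\}$ of $DG_h$. Fix such a basis function and expand
\[
    b(\tilde\theta_h, \lambda_M\chi_K) = \sum_{N \in \cN} b(\theta_N, \lambda_M\chi_K).
\]
The first step is the support reduction: I would argue that $b(\theta_N, \lambda_M\chi_K) = 0$ whenever $N \notin \cN_K$. Indeed, $\theta_N$ is supported on $\cE_N$ by construction, while from the formula for $b$ only contributions at vertices of facets where $\jump{\lambda_M\chi_K}$ is nonzero matter; since $\chi_K$ vanishes outside $K$ and $\lambda_M$ vanishes at every vertex other than $M$, only the pairs $(F,P)$ with $F \in \cE_K \cap \cE_I$ and $P = M$ contribute. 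A non-zero pairing with $\theta_N$ therefore requires $F \in \cE_N \cap \cE_K$, which forces $N$ to be a vertex of $K$.

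With only $N \in \cN_K$ surviving, the two defining equations of \cref{local-construction} finish the computation: for $N \in \cN_K \setminus \{M\}$ one has $b(\theta_N, \lambda_M\chi_K)=0$, while for $N = M$ one has $b(\theta_M, \lambda_M\chi_K) = r(\lambda_M\chi_K)$, so that $b(\tilde\theta_h, \lambda_M\chi_K) = r(\lambda_M\chi_K)$.

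The main obstacle is the boundary exceptional case: vertices $N$ with $\cE_N \cap \cE_I = \emptyset$, for which $\theta_N$ is not produced by \cref{local-construction}. When $M$ itself falls into this category, one has to verify independently that $r(\lambda_M\chi_K)=0$ so that setting $\theta_M:=0$ is consistent. I would handle this by noting that, for such $M$, no interior facet contains $M$ and $\lambda_M$ vanishes at every other node, hence $\jump{\lambda_M\chi_K}|_F(P)=0$ for all $F \in \cE_I$ and all $P \in \cN \cap F$. This yields $b(\mu, \lambda_M\chi_K)=0$ for every $\mu \in M_h$; applying this to the genuine solution $\theta_h$ gives $r(\lambda_M\chi_K) = b(\theta_h,\lambda_M\chi_K) = 0$, so both sides of the target identity vanish. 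Combining all cases, $(u_h, \tilde\theta_h)$ solves \cref{mixed-formulation} and uniqueness delivers \cref{equivalence}.
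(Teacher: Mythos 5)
Your proposal is correct and follows essentially the same route as the paper: reduce to the Lagrange basis $\lambda_M\chi_K$ of $DG_h$, use the support of $\theta_N$ and of $\jump{\lambda_M\chi_K}$ to collapse the sum to $N\in\cN_K$, and then invoke the two defining equations of the local systems, with uniqueness of the mixed solution closing the argument. Your extra verification of the exceptional boundary vertices with $\cE_N\cap\cE_I=\emptyset$ is a welcome detail the paper leaves implicit, but it does not change the approach.
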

\begin{proof}
	From \cref{residual1}, it suffices to prove that
\begin{equation}\label{equivalence-a}
\sum_{N \in \cN}  b(\theta_N, \lambda_M \chi_K) = r(\lambda_M \chi_K), \quad \forall \,M \in \cN, \; \forall \,K \in \cT_M.
\end{equation}
By  \cref{local-construction}, we have that
\begin{equation}
	\begin{split}
	\sum_{N \in \cN}  b(\theta_N, \lambda_M \chi_K) = 
	\sum_{N \in \cN_K}  b(\theta_N, \lambda_M \chi_K)  =  b(\theta_M, \lambda_M \chi_K) = r(\lambda_M \chi_K).
	\end{split}
\end{equation}
This completes the proof of the lemma.
\end{proof}

\subsection{Computation of $\theta_N$ for $N \in \cN$}
For each $N \in \cN$, 
recall that we let $\{K_{i,N}\}_{i=1}^{n_N}$ be the clockwise oriented elements in $\cT_N$ and $n_N \ge 1$ is the number of elements in $\cT_N$. 
When $N \in \cN_I$, we let $F_i = K_{i-1,N} \cap K_{i,N}, i=1, \cdots, n_N (K_{0} = K_{n_N})$.  When $N \in \cN_\partial$, note that there are $n_N-1$ interior facets in $\cE_N$. 
Also let $M_{i}$ denotes the other vertices of $F_i$.

We firstly deal with the case $N \in \cN_I$.
From the second equation in \cref{local-construction}, it is easy to see that $\theta_N(M_i)=0$ for $i=1, \cdots, n_N$.
We then let
\[\theta_{i,N} := \theta_N|_{F_i}(N), \quad r_{i,N}:=r (\lambda_N \chi_{K_i}), \quad a_{i,N} := \mathfrak{s}_N (F_i)h_{F_i}. 
\]
From the first equation in \cref{local-construction}, a straight computation gives that
\begin{equation}\label{theta-n-1}
	a_{i,N} \theta_{i,N} - a_{i+1}\theta_{i+1,N} = 2 r_{i,N}, \quad i = 1, \cdots, n_N-1.
\end{equation}
Note that we only used $n_N-1$ equations since the last one is linearly dependent.
The constraint  provides the last equation if $N \in \cN_I$:
\begin{equation}\label{theta-n-2}
	\sum_{i=1}^{n_N} a_{i,N} \theta_{i,N} =0.
\end{equation}
Combining \cref{theta-n-1} and \cref{theta-n-2} yields a $n_N \times n_N$ non-singular local system. It is helpful to denote the unknowns by $ \tilde{\theta}_{i,N} :=a_{i,N}  \theta_{i,N}$, such that the matrix of the previous local system has constant coefficients and depends only on $n_N$ .

If $N \in \cN_{\partial}$, note that $\theta_N|_{F}(N)= 0$ for $F\in \cE_{\partial}$, i.e., $\theta_{1,N}= \theta_{n_N,N} = 0$. Hence we can explicitly compute $\theta_{i,N}$ for $i =1, \cdots, n_N-1$. Thus no local $n_N \times  n_N$ problem is required to solve for boundary elements.

By a simple calculation, we also have the following estimate:
\begin{equation}\label{estimate-r-N-to-theta-N}
\sum_{i=1}^{n_N} \tilde{\theta}_{i,N}^2 \lesssim \sum_{i=1}^{n_N} r_{i,N}^2 \,\Longleftrightarrow\, \|\{\tilde{\theta}_{N}\}\|
	\lesssim \|\{r_{N}\}\|,
\end{equation}
where $\{r_{N}\}$ and $\{\tilde{\theta}_{N}\}$ are the vectors in $\mathbb R^{n_N}$ formed by $(r_{i,N})_{i}$ and $(\tilde{\theta}_{i,N})_i$, respectively. Here  $\| \cdot\|$ denotes the Euclidean $2$-norm.

We also obtain the following estimate for $\{r_N\}$ which will be used later.
From the definition of $r_{i,N}$, integration by parts, \cref{eq:boundary_trace}, $\jump{u_h}|_{\cE_I}=0$ and  direct computations on norms of $\lambda_N \chi_{K_i}$, we have
\begin{equation}\label{estimate-r-N}
\begin{split}
	r_{i,N} =& r(\lambda_N \chi_{K_i})\\
		=&(f,\lambda_N \chi_{K_i})_{\Omega_h} - (\nabla u_h, \nabla (\lambda_N \chi_{K_i}))_{\Omega_h}
	- \left<g_h - u_h , \partial_{\bn_h} (\lambda_N \chi_{K_i})\right>_{\partial \Omega_h} \\
	&+\left<\lambda_N \chi_{K_i} , \partial_{\bn_h} u_h\right>_{\partial \Omega_h} 
	+ \left< \{ \partial_{\bn_F} u_h\}, \jump{\lambda_N \chi_{K_i}} \right>_{\cE_I \cap \Omega_h}
\\
&+ \sum_{K \in \cT_h^b} \beta h_K^{-1}\left<g_h -u_h,\lambda_N \chi_{K_i}\right>_{\Gamma_K}
-\gamma\sum_{F \in \cE_{g}}  h_F \left< \jump{\partial_{\bn_F} (\lambda_N \chi_{K_i})},\jump{\partial_{\bn_F} u_h}\right>_F\\
=&(f,\lambda_N \chi_{K_i})_{K_i \cap \Omega_h} 
	- \left<g_h - u_h , \partial_{\bn_h} (\lambda_N \chi_{K_i})\right>_{\Gamma_{K_i}} + {\beta}{h^{-1}_{K_i}}\left<g_h -u_h,\lambda_N \chi_{K_i}\right>_{\Gamma_{K_i}}\\
	&-\!\!\!\sum_{F \in \cE_I \cap \cE_{K_i}} \left<  \jump{ \partial_{\bn_F} u_h}, \{\lambda_N \chi_{K_i}\} \right>_{F \cap \Omega_h} 
	-\!\!\!
	\gamma\sum_{F \in \cE_{g} \cap \cE_{K_i}}  h_F \left< \jump{\partial_{\bn_F} (\lambda_N \chi_{K_i})},\jump{\partial_{\bn_F} u_h}\right>_F\\
	\lesssim & h_{K_i} \|f \|_{K_i \cap \Omega_h}  
	+h_{K_i}^{-1/2} \|g_h - u_h\|_{\Gamma_{K_i}}  
	+\sum_{F \in \cE_I \cap \cE_{K_i}}h_F^{1/2} \| \jump{ \partial_{\bn_F} u_h}\|_{F}.
\end{split}
\end{equation}
Immediately, we also have for any node $N$ that
\begin{equation}\label{residual-estimate}
\begin{split}
	\|\{\tilde{\theta}_{N}\}\|
	&\lesssim  \|\{r_{N}\}\|\\
	&\lesssim  \sum_{F \in \cap \cE_N \cap \cE_I} h_F^{1/2}\| \jump{ \partial_{\bn_F} u_h}\|_F+ 
	\sum_{K \in \cT_N} \left( h_K \|f\|_{K\cap \Omega_h} +h_K^{-1/2} \|g_h - u_h\|_{\Gamma_K}  \right).
	\end{split}
\end{equation}


\subsection{Flux reconstruction}
In this subsection, we locally recover a flux for each element $K \in \cT_h$. The element-wise construction is completely explicit and based on the computation of $u_h$ and $\theta_h$. 
%
Denote the  $H(\mbox{div}; \triangle_h)$ conforming Raviart-Thomas ($RT$) space of index $1$ with respect to $\cT_h$
by
\[
RT_h= \left\{ \bftau \in H(\mbox{div}; \triangle_h) \,: \, \bftau|_K \in RT^{1}(K),  \;\forall\, K\in\cT_h \right\},
\]
where $RT^{1}(K) = \mathbb{P}_{1}(K)^d + \bfx \, \mathbb{P}_{1}(K) $.
On a triangular  element $K \in \cT_h$, a vector-valued function $\bftau$ in $RT^1(K)$  is characterized 
by the following degrees of freedom (see Proposition 2.3.4 in \cite{boffi2013mixed}):
 \[
 	\int_K \bftau \cdot \bfzeta \,dx , \quad \forall \, \bfzeta \in \mathbb{P}_{0}(K)^d,	
 \]
 and
 \[	
 	\int_F ( \bftau \cdot \bn_F) \,w \,ds, \quad \forall \,w \in \mathbb{P}_{1}(F) \mbox{ and } \; \forall \, F \in \cE_K.
 \]

For each element $K \in \cT_h$, we define $\bsigma_K \in RT^1(K)$ such that for all $ \bfzeta \in \mathbb{P}_{0}(K)^d$ and  for all $ F \in \cE_K$ and $w \in \mathbb{P}_{1}(F)$ it satisfies:
\begin{equation}\label{flux-construction}
	\begin{split}
		&(\bsigma_K, \bfzeta)_K =
		(\nabla u_h, \bfzeta)_K + \gamma\sum_{F \in \cE_{g} \cap \cE_K }  
		h_F \left< \jump{\partial_{\bn_F} u_h},\jump{\bfzeta \cdot \bn_F}\right>_F 
		+ \left<g_h - u_h, \bfzeta \cdot \bn_h \right>_{\Gamma_K},\\
		 &\left<  \bsigma_K \cdot \bn_F, w \right>_F  =
		\left< \{ \partial_{\bn_F} u_h\}, w \right>_F 
		 - b_F(\theta_h, w) \quad \mbox{if } F \in \cE_I,\\
		 &\left<  \bsigma_K \cdot \bn_F, w \right>_F  =
		\left<  \partial_{\bn_F} u_h, w \right>_F 
		  \quad \mbox{if } F \in \cE_{\partial}\setminus \Gamma_K,\\
		&\left<  \bsigma_K \cdot \bn_F, w \right>_F  =
		\left< \partial_{\bn_F} u_h, w \right>_F + 
		\dfrac{\beta}{h_K}\left<g_h -u_h, w \right>_{\Gamma_K} \quad \mbox{if } F = \Gamma_K,
	\end{split}
\end{equation}
where 
$$b_F(\theta_h, w) = \frac {h_F}{2} \sum_{N\in\cN\cap F} \theta_{h|F} (N) w(N).$$ 
\begin{remark}
One can also reconstruct the flux in the Raviart-Thomas space of index $0$ similarly to \cref{flux-construction}. Note that there are now no interior degrees of freedom thus the first equation in \cref{flux-construction} is not needed, whereas the edge degrees of freedom are tested with $w \in \mathbb{P}_{0}(F)$.
\end{remark}

\begin{remark}
	Note that if $F=\Gamma_K \in \cE_K$, we then have that
	\[
		 \bsigma_K \cdot \bn_F =  \partial_{\bn_F} u_h+ \dfrac{\beta}{h_K}(g_h -u_h).
	\]
\end{remark}
We then define the global recovered flux by
\begin{equation}\label{flux-construction-1}
	\bsigma_h = \sum_{K \in \cT_h} \bsigma_K.
\end{equation}


Recall that on a cut element $K \in \cT_h^b$, $f$ is only defined on $K \cap \Omega_h$. We next introduce an extension of $f$ to the whole cut element $K$. For each $K \in \cT_h^b$ and $K \not \subset \Omega_h$, 
we extend $f$ to $K \cap \Og_h^c$ such that
$f|_{K \cap \Og_h^c}$ is linear and also satisfies that for all $w \in \mathbb{P}_1(K)$,
\begin{equation}\label{tilde-f}
	\begin{split}
		( f, w)_{K \cap \Omega_h^c} =\beta h_K^{-1}\left< g_h - u_h, w\right>_{\Gamma_K}
		+\sum_{F \in \cE_K \cap \cE_I}  \frac{1}{2}\left<\mathfrak{s}_K(F) \jump{\partial_{\bn_F} u_h}, \jump{w} \right>_{F \cap \Omega_h^c}.
	\end{split}
\end{equation}
Note that if the right-hand side is $0$, this represents a zero extension.

Let $\Pi_1$ be the $L^2$ projection operator onto the space $DG_h$ defined on $\cT_h$.
\begin{lemma}
	Let $\bsigma_h$ be defined in \cref{flux-construction-1}. Then we have that $\bsigma_h \in RT_h$ and
	\begin{equation}\label{local-conservation}
	\begin{split}
	-\divvr \bsigma_h = \Pi_1(f) \quad \forall \, K \in \cT_h. 
	\end{split}
	\end{equation}
\end{lemma}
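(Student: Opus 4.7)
The statement splits into two parts: $H(\mathrm{div};\triangle_h)$-conformity of $\bsigma_h$, and the element-wise identity $-\divvr\bsigma_K = \Pi_1(f)|_K$. For the conformity part, it is enough to check that on every interior facet $F\in\cE_I$ shared by two elements $K^+,K^-$, the normal traces $\bsigma_{K^+}\cdot\bn_F$ and $\bsigma_{K^-}\cdot\bn_F$ coincide. The second line of \cref{flux-construction} prescribes the same value $\langle\{\partial_{\bn_F}u_h\},w\rangle_F - b_F(\theta_h,w)$ for the facet DoF from either side, and these two degrees of freedom uniquely determine $\bsigma_K\cdot\bn_F\in\mathbb{P}_1(F)$ by unisolvence. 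Hence the normal traces agree and $\bsigma_h\in RT_h$.

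For the divergence identity, fix $K\in\cT_h$ and $w\in\mathbb{P}_1(K)$. Since $\divvr\bsigma_K\in\mathbb{P}_1(K)$ and $(\Pi_1(f),w)_K = (f,w)_K$ for such $w$, it suffices to show
\begin{equation*}
-(\divvr\bsigma_K,w)_K = (f,w)_K \qquad \forall\, w\in\mathbb{P}_1(K).
\end{equation*}
Integration by parts rewrites the left-hand side as $(\bsigma_K,\nabla w)_K - \sum_{F\in\cE_K}\langle\bsigma_K\cdot\bn_K,w\rangle_F$, and I then substitute the DoFs of $\bsigma_K$ from \cref{flux-construction} (valid because $\nabla w\in\mathbb{P}_0(K)^d$ and $w|_F\in\mathbb{P}_1(F)$). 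This yields an explicit expression in terms of $\nabla u_h$, $u_h$, $g_h$, $\theta_h$, the interior facet jumps $\jump{\partial_{\bn_F}u_h}$, and a Nitsche-type term on $\Gamma_K$ when $K$ is cut.

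The next step is to match this expression with $(f,w)_K$ by invoking the residual equation \cref{residual1} with the test function $\chi_K w\in DG_h$, the extension of $w$ by zero outside $K$. Writing $b(\theta_h,\chi_K w) = l_h(\chi_K w) - \tilde a_h(u_h,\chi_K w)$ and using the identities $\jump{\chi_K w}|_F = \mathfrak{s}_K(F)\,w|_F$ and $\jump{\partial_{\bn_F}(\chi_K w)}|_F = \mathfrak{s}_K(F)\,\partial_{\bn_F} w|_F$ on $F\in\cE_I\cap\cE_K$ (together with $\jump{u_h}=0$, which cancels one symmetrization term in $\tilde a_h$), a term-by-term comparison shows that every gradient, facet-jump, ghost-penalty, and Nitsche contribution produced by the flux DoFs is reproduced exactly by the residual expansion. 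For a non-cut element $K\subset\Omega_h$ this immediately yields $(f,w)_K = -(\divvr\bsigma_K,w)_K$.

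For a cut element $K\in\cT_h^b$, however, the residual equation only delivers $(f,w)_{K\cap\Omega_h}$, and there are leftover contributions supported on the unphysical pieces $F\cap\Omega_h^c$ of interior facets together with the Nitsche-type term on $\Gamma_K$. The extension \cref{tilde-f} is engineered precisely to absorb these: its defining identity supplies $(f,w)_{K\cap\Omega_h^c}$ as the sum of exactly those leftover terms, so adding the two integrals produces $(f,w)_K = (f,w)_{K\cap\Omega_h} + (f,w)_{K\cap\Omega_h^c} = -(\divvr\bsigma_K,w)_K$. The main obstacle is this cut-element bookkeeping: one must carefully track the signs from $\mathfrak{s}_K(F)$ and the outer normal $\bn_K$, split each facet integral between $F\cap\Omega_h$ and $F\cap\Omega_h^c$, and align the ghost-penalty contributions coming from the flux DoFs with those coming from $a_h$ through the residual, so that \cref{tilde-f} performs the intended cancellation with nothing missing or double-counted.
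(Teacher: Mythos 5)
Your proposal is correct and follows essentially the same route as the paper: conformity from the fact that the facet degrees of freedom in \cref{flux-construction} are prescribed identically from both sides, the divergence identity by testing the residual equation \cref{residual1} with $\chi_K w$ and integrating by parts, and, on cut elements, using the extension \cref{tilde-f} to absorb exactly the leftover penalty and half-jump terms on $\Gamma_K$ and $F\cap\Omega_h^c$. The only device you leave implicit in the ``bookkeeping'' is the one the paper makes explicit in \cref{residual5}: an integration by parts over $K\cap\Omega_h^c$ that converts the Nitsche consistency term $\left<w,\partial_{\bfn_h}u_h\right>_{\Gamma_K}$ into the gradient contribution on $K\cap\Omega_h^c$ plus the facet terms that \cref{tilde-f} is built to cancel.
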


\begin{proof}
	By its definition, it is easy to see that $\bsigma_h \in RT_h$.
	Firstly, we note that for all $w \in DG_h$ we have
\begin{equation}\label{residual2-a}
	\begin{split}
	b(\theta_h,w) =& l_h(w) - \tilde a_h(u_h, w)\\
	=&
	(f,w)_{\Omega_h} - (\nabla u_h, \nabla w)_{\Omega_h}
	- \left<g_h - u_h , \partial_{\bn_h} w\right>_{\partial \Omega_h} 
	+\left<w , \partial_{\bn_h} u_h\right>_{\partial \Omega_h}\\
&+ \sum_{K \in \cT_h^b} \dfrac{\beta}{h_K}\left<g_h -u_h,w\right>_{\Gamma_K}
-\gamma\sum_{F \in \cE_{g}}  h_F \left< \jump{\partial_{\bn_F} w},\jump{\partial_{\bn_F} u_h}\right>_F\\
&+ \left< \{ \partial_{\bn_F} u_h\}, \jump{w} \right>_{\cE_I \cap \Omega_h}
	\end{split}
\end{equation}
since $\jump{u_h}=0$.	To prove \cref{local-conservation}, we  first consider the case of  $K \in \cT_h^{int}$, i.e., $K \subset \Og_h$, $K \cap \partial \Og_h = \emptyset$.
	Let $w \in DG_h$ such that $w|_K\in \mathbb{P}_1(K)$ and $w$ vanishes elsewhere. From \cref{residual2-a} we have
	\begin{equation}\label{residual3}
	\begin{split}
b(\theta_h, w) =&
	(f,w)_K - (\nabla u_h, \nabla w)_K
-\gamma\sum_{F \in \cE_{g} \cap \cE_K}  h_F \left< \jump{\partial_{\bn_F} u_h},\jump{\partial_{\bn_F} w}\right>_F\\
&+ \left< \{ \partial_{\bn_F} u_h\}, \jump{w} \right>_{\cE_K}.
	\end{split}
\end{equation}
By integration by parts, \cref{flux-construction}, the fact that $b(\theta_h, w) = \sum\limits_{F \in \cE_K} b_F(\theta_h, \jump{w})$, and \cref{residual3}, we have
\begin{equation}\label{residual4}
	\begin{split}
	(\nabla \cdot \bsigma_h, w)_K =& -(\bsigma_h, \nabla w)_K + \left< \bsigma_h \cdot \bn_K, w \right>_{\cE_K}\\
	=& -(\bsigma_h, \nabla w)_K + \left< \bsigma_h \cdot \bn_F, \jump{w} \right>_{\cE_K}\\
	=&-(\nabla u_h, \nabla w)_K-\gamma\sum_{F \in \cE_{g} \cap \cE_K}  h_F \left< \jump{\partial_{\bn_F} u_h},\jump{\partial_{\bn_F} w}\right>_F\\
	&+\left< \{ \partial_{\bn_F} u_h\}, \jump{w} \right>_{\cE_K} - b(\theta_h, w) = -(f,w)_K,
	\end{split}
\end{equation}
which yields \cref{local-conservation} for all interior elements. 

Now consider the second case when $K\in \cT_h^b$. Again, let  $w \in DG_h$ such that $w|_K\in \mathbb{P}_1(K)$ and $w$ vanishes elsewhere.
From \cref{residual2-a} we now have
	\begin{equation}
	\begin{split}
b(\theta_h, w) =&
	(f,w)_{K \cap \Omega_h} - (\nabla u_h, \nabla w)_{K \cap \Omega_h}
	- \left<g_h - u_h , \partial_{\bn_h} w\right>_{\Gamma_K} 
	+\left<w , \partial_{\bn_h} u_h\right>_{\Gamma_K}\\
&+  \beta h_K^{-1}\left<g_h -u_h,w\right>_{\Gamma_K}
-\gamma\sum_{F \in \cE_{g} \cap \cE_K}  h_F \left< \jump{\partial_{\bn_F} w},\jump{\partial_{\bn_F} u_h}\right>_F\\
&+ \left< \{ \partial_{\bn_F} u_h\}, \jump{w} \right>_{\cE_I \cap \cE_K \cap \Omega_h}.
	\end{split}
\end{equation}
Applying integration by parts on $K \cap \Omega_h^c$ gives
\begin{equation}
	\begin{split}
		\left<w , \partial_{\bn_h} u_h\right>_{\Gamma_K} &= -(\nabla u_h, \nabla w)_{K \cap \Omega_h^c}
		+ \left< \partial_{\bn_K} u_h, w \right>_{\partial (K \cap \Omega_h^c) \setminus \Gamma_K}\\
		&= -(\nabla u_h, \nabla w)_{K \cap \Omega_h^c}
		+ \left< \partial_{\bn_F} u_h, \jump{w} \right>_{\cE_I \cap \cE_K \cap \Omega_h^c}
		+ \left< \partial_{\bn_K} u_h, w \right>_{\cE_K \cap \cE_{\partial}}
	\end{split}
\end{equation}
which, combining with the following equation,
\[
	\left< \partial_{\bn_F} u_h, \jump{w} \right>_{\cE_I \cap \cE_K \cap \Omega_h^c} = 
	\left< \{\partial_{\bn_F} u_h\}, \jump{w} \right>_{\cE_I \cap \cE_K \cap \Omega_h^c}
	+\dfrac{1}{2} \left< \mathfrak{s}_K(F)\jump{\partial_{\bn_F} u_h}, \jump{w} \right>_{\cE_I \cap \cE_K  \cap \Omega_h^c},
\]
implies
\begin{equation}\label{residual5}
	\begin{split}
		\left<w , \partial_{\bn_h} u_h\right>_{\Gamma_K} =& -(\nabla u_h, \nabla w)_{K \cap \Omega_h^c}
		+ \left< \{\partial_{\bn_F} u_h\}, \jump{w} \right>_{\cE_I \cap \cE_K \cap \Omega_h^c}\\
	&+\dfrac{1}{2}\left< \mathfrak{s}_K(F)\jump{\partial_{\bn_F} u_h}, \jump{w} \right>_{\cE_I \cap \cE_K \cap \Omega_h^c}
		+ \left< \partial_{\bn_K} u_h, w \right>_{\cE_K \cap \cE_{\partial}}.
	\end{split}
\end{equation}
Note that the previous relation also holds in the case $\Gamma_K=\cE_K \cap \cE_{\partial}$. 

Combining all above with the definitions in \cref{tilde-f} and \cref{flux-construction} gives
	\begin{equation}\label{residual6}
	\begin{split}
b(\theta_h, w)=&
	(f,w)_{K} - (\nabla u_h, \nabla w)_{K }
	- \left<g_h - u_h , \partial_{\bn_h} w\right>_{\Gamma_K} \\
&
-\gamma\sum_{F \in \cE_{g} \cap \cE_K}  h_F \left< \jump{\partial_{\bn_F} w},\jump{\partial_{\bn_F} u_h}\right>_F
+ \left< \{ \partial_{\bn_F} u_h\}, \jump{w} \right>_{ \cE_K}\\
=&
	(f,w)_{K} 
	- (\bsigma_h, \nabla w)_K
+ \left< \{ \partial_{\bn_F} u_h\}, \jump{w} \right>_{ \cE_K}.
	\end{split}
\end{equation}
Again, we have $b(\theta_h, w) = \sum\limits_{F \in \cE_K} b_F(\theta_h, \jump{w})$. By using \cref{flux-construction}, the previous equality gives  
\begin{equation}\label{residual7}
	0=(f,w)_{K} - (\bsigma_h, \nabla w)_K+ \left< \bsigma_h \cdot \bn_F, \jump{w} \right>_{\cE_K}
\end{equation}
so we obtain \cref{local-conservation} thanks to the integration by parts formula.
This completes the proof of the lemma.
\end{proof}

\begin{remark}
For the cut elements, it is not obvious to construct a flux that is both locally conservative in the cut part $K \cap \Og_h$ and, at the same time, maintains continuous normal flux. The technique of applying integration by parts in \cref{residual5} renders the problem to be a more regular problem and completes partial elements to full elements.
\end{remark}

\section{Application in the a posteriori error estimation}\label{posteriori}
For the sake of simplicity, we assume in this section that $f|_K\in\mathbb P_1(K)$ on any $K\in \cT_h \setminus \cT_h^b$. 
In the adaptive procedure, we define the following local error indicators 
\begin{equation} \label{indicator-a}
	\eta_{K,1} = \| \bsigma_h - \nabla u_h\|_K, \quad \eta_{K,2} = \| \bsigma_h - \nabla u_h\|_{K \cap \Omega_h}  \quad \forall K \in \cT_h
\end{equation}
and the corresponding estimators:
\begin{equation} \label{estimator-a}
	\eta_1 = \sqrt{\sum_{K \in \cT_h}\eta_{K,1}^2}, \quad \eta_2 = \sqrt{\sum_{K \in \cT_h}\eta_{K,2}^2} \,.
\end{equation}

\subsection{Reliability}

Let $\tilde e \in H^1(\Omega)$ be the lifting such that $\tilde e = e := u- u_h$ on $\partial \Omega$ and
\[
	\|\tilde e\|_{H^1(\Omega)} = \| e\|_{H^{1/2}(\partial \Omega)}.
\]
\begin{theorem}[Reliability]
Let $\bsigma_h$ be given by (\ref{flux-construction-1}) and $u_h$ be the CutFEM solution in (\ref{eq:fem}).
We have the following reliability result:
\begin{equation}\label{reliability-1}
	\begin{split}
	\| \nabla (u - u _h)\|_{\Omega} \le  \| \bsigma_h  - \nabla u_h\|_{\Omega} + 2 \| \nabla \tilde e\|_{\Og} + C \epsilon,
	\end{split}
\end{equation}
where the  constant $C$ is independent of the mesh size and mesh-domain intersection,
and
\[
  \epsilon = \sqrt{\sum_{K \in \cT_h^b} h_K^2 \|f  - \Pi_1(f) \|_{K \cap \Omega}^2}.
\]

\end{theorem}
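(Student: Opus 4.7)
The plan is to use the classical Prager--Synge-type argument: lift the boundary discrepancy of $u-u_h$ into an $H^1_0(\Omega)$ function, test the Poisson equation against it, and exploit the local conservation identity $-\nabla\cdot\bsigma_h=\Pi_1(f)$ from the previous lemma to turn the consistency defect into a data oscillation supported only on cut elements. Concretely, I set $v:=u-u_h-\tilde e$; since $u=g$ on $\partial\Omega$ and $\tilde e$ matches $u-u_h$ there by construction, $v\in H^1_0(\Omega)$, and the triangle inequality reduces the claim to proving
$$\|\nabla v\|_\Omega \le \|\bsigma_h - \nabla u_h\|_\Omega + \|\nabla \tilde e\|_\Omega + C\epsilon.$$

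Testing with $v$ itself and using $-\Delta u=f$ in $\Omega$, I obtain
$$\|\nabla v\|_\Omega^2 = (f,v)_\Omega - (\nabla u_h,\nabla v)_\Omega - (\nabla \tilde e,\nabla v)_\Omega.$$
Since $\bsigma_h\in H(\mbox{div};\triangle_h)$ and $\Omega\subset\triangle_h$, integration by parts on $\Omega$, with the boundary contribution killed by $v|_{\partial\Omega}=0$, together with the conservation identity, gives $(\bsigma_h,\nabla v)_\Omega=(\Pi_1(f),v)_\Omega$. Inserting $\pm\bsigma_h$ in the previous line therefore yields
$$\|\nabla v\|_\Omega^2 = (f-\Pi_1(f),v)_\Omega + (\bsigma_h-\nabla u_h,\nabla v)_\Omega - (\nabla\tilde e,\nabla v)_\Omega.$$
The last two right-hand terms are controlled directly by Cauchy--Schwarz, producing the contributions $\|\bsigma_h-\nabla u_h\|_\Omega\|\nabla v\|_\Omega$ and $\|\nabla\tilde e\|_\Omega\|\nabla v\|_\Omega$.

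For the residual data oscillation term I invoke the section's assumption that $f|_K\in\mathbb{P}_1(K)$ on any $K\in\cT_h\setminus\cT_h^b$, so $f-\Pi_1(f)$ vanishes off the cut elements. For each $K\in\cT_h^b$, I extend $v$ by zero outside $\Omega$ and apply the Poincar\'e-type inequality \cref{poincare} on the neighborhood $\mathcal{S}_K$, obtaining $\|v\|_{K\cap\Omega}\le\|v\|_K\lesssim h_K\|\nabla v\|_{\mathcal{S}_K}$. A discrete Cauchy--Schwarz, together with the finite overlap of the patches $\{\mathcal{S}_K\}_{K\in\cT_h^b}$ (which follows from the shape regularity of $\cT_{0,h}$), then delivers $|(f-\Pi_1(f),v)_\Omega|\lesssim \epsilon\,\|\nabla v\|_\Omega$. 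Dividing through by $\|\nabla v\|_\Omega$ and applying the triangle inequality $\|\nabla(u-u_h)\|_\Omega\le\|\nabla v\|_\Omega+\|\nabla\tilde e\|_\Omega$ completes the proof.

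The only delicate point, rather than a genuine obstacle, is the use of the Poincar\'e inequality on cut elements whose supporting neighborhood $\mathcal{S}_K$ may straddle $\partial\Omega$: it is the zero extension of $v\in H^1_0(\Omega)$ that makes $v$ an admissible test function for that lemma, and this is also the reason why the constant $C$ in front of $\epsilon$ remains independent of how $\partial\Omega$ cuts the background mesh.
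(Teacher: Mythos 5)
Your proposal is correct and follows essentially the same route as the paper's proof: the paper bounds $\|\nabla(e-\tilde e)\|_\Omega$ by duality as a supremum over $v\in H_0^1(\Omega)$ of $(\nabla e,\nabla v)/\|\nabla v\|_\Omega$, whereas you test directly with $v=e-\tilde e$, but the core steps coincide — the same lifting $\tilde e$, the same splitting $\nabla u-\bsigma_h$ plus $\bsigma_h-\nabla u_h$, the same use of $-\nabla\cdot\bsigma_h=\Pi_1(f)$ and integration by parts to reduce the consistency defect to $(f-\Pi_1(f),v)$ on cut elements, and the same Poincar\'e-type inequality \cref{poincare} to produce $C\epsilon$. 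No further comparison is needed.
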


\begin{remark}
Thanks to the assumption that $\Omega \subset \Omega_h$, we have that
\[
	\| \bsigma_h  - \nabla u_h\|_{\Omega} \le \| \bsigma_h  - \nabla u_h\|_{\Omega_h} \le \| \bsigma_h  - \nabla u_h\|_{\triangle_h}.
\]
Therefore both $\eta_1$ and $\eta_2$ could serve as the error estimator in the AMR procedure.
\end{remark}

\begin{proof}
By triangle inequality, we firstly have the following bound:
\begin{equation}\label{reliability-a}
	\begin{split}
		\|\nabla e\|_{\Omega} &\le \|\nabla (e - \tilde e)\|_{ \Omega}  +
		\|\nabla \tilde e\|_{\Omega} \\
		&= \sup_{v \in H_0^1(\Omega)} \dfrac{(\nabla (e - \tilde e), \nabla v)}{\|\nabla v\|_{\Omega}} 
		+ \|\nabla \tilde e\|_{ \Omega}\\
		& \le 
		 \sup_{v \in H_0^1(\Omega)} \dfrac{(\nabla e, \nabla v)}{\|\nabla v\|_{\Omega}} 
		+2 \|\nabla \tilde e\|_{ \Omega}.
	\end{split}
\end{equation}
To bound $(\nabla e, \nabla v)_{\Omega} $ we have
\begin{equation}
	\begin{split}
	|(\nabla e, \nabla v)_{\Omega} |&= |(\nabla u - \bsigma_h, \nabla v)_{\Omega}+
	(\bsigma_h - \nabla u_h, \nabla v)_{\Omega}|\\
	&\le \sum_{K \in \cT_h^b}|(f - \Pi_1(f), v)_{K \cap \Omega} |+ |(\bsigma_h - \nabla u_h, \nabla v)_{\Omega}|\\
	&\lesssim
	\sum_{K \in \cT_h^b} 
	\|f -\Pi_1(f) \|_{K \cap \Omega} \|v\|_{K }
	+ \|\bsigma_h - \nabla u_h\|_{\Omega} \|\nabla v\|_{\Omega}\\
	&\le
	C \epsilon   \|\nabla v\|_{\Og}
	+ \|\bsigma_h - \nabla u_h\|_{\Omega} \|\nabla v\|_{\Omega}
	\end{split}
\end{equation}
where we used (\ref{poincare}) for the last inequality. This completes the proof of the theorem.
\end{proof}

\begin{remark} It is useful to note that for any $K\in \cT_h^b$, we have that
\begin{equation}\label{reliability-d_0}
 \|\Pi_1(f)\|_{K}\lesssim |K|^{-1/2}\int_K|f|.
\end{equation}
Indeed, denoting by $(a_i)_{1\leq i\leq d+1}$ the values taken by $\Pi_1(f)$ at the vertices of $K$ and by $\lambda_i$ the corresponding nodal basis functions on $K$, we have using that $0\leq\lambda_i\leq 1$,
\begin{equation*}
\begin{split}
|K|\sum_{i=1}^{d+1}a_i^2\simeq \|\Pi_1(f)\|_{K}^2&=\int_K f \bigg(\sum_{i=1}^{d+1}a_i \lambda_i\bigg)\\
&\lesssim \int_K|f| \bigg(\sum_{i=1}^{d+1}|a_i|\bigg)\lesssim |K|^{-1/2} \|\Pi_1(f)\|_{K} \int_K|f|,
\end{split}
\end{equation*}
which yields the desired estimate.
\end{remark}

\begin{remark}
By the definition of $f$ in \cref{tilde-f}, $f|_{\Og_h^c}$ can be bounded as follows using Cauchy-Schwarz inequality:
\begin{equation*}
	\begin{split}
		&|(f,w)_{K \cap \Og_h^c}|
		\lesssim
		\beta h_K^{-1}  \|g_h - u_h\|_{\Gamma_K} \| w\|_{\Gamma_K} + \sum_{F\in \cE_I \cap \cE_K} \|\jump{\partial_{\bn_F} u_h}\|_{F\cap \Omega_h^c}  \| w\|_{F \cap \Og_h^c}\\
		\lesssim&
		\left(\beta h_K^{-1} \frac{ |\Gamma_K|^{1/2} }{|K \cap \Og_h^c|^{1/2}}\|g_h - u_h\|_{\Gamma_K}  + \!\! \sum_{F\in \cE_I \cap \cE_K} \frac{|F\cap \Omega_h^c|^{1/2}}{|K \cap \Og_h^c|^{1/2}} \|\jump{\partial_{\bn_F} u_h}\|_{F\cap \Omega_h^c} \right) \| w\|_{K \cap \Og_h^c},
	\end{split}
\end{equation*}
for any $w\in \mathbb P_1(K \cap \Og_h^c)$.
Therefore, we have
\begin{equation*}\label{reliability-b_0}
	\begin{split}
	|K \cap \Og_h^c|^{1/2}\|f\|_{K \cap \Og_h^c} \lesssim 	
	\beta h_K^{-1}  |\Gamma_K|^{1/2}\|g_h - u_h\|_{\Gamma_K}  + \!\!\!\!\!\!\!\sum_{F\in \cE_I \cap \cE_K} |F\cap \Omega_h^c|^{1/2} \|\jump{\partial_{\bn_F} u_h}\|_{F \cap \Og^c_h}.
	\end{split}
\end{equation*}
Hence, using that
\begin{equation*}\label{reliability-c_0}
\|f - \Pi_1(f)\|_{K \cap \Og} \le \|f - \Pi_1(f)\|_{K \cap \Og_h} \le \|f\|_{K \cap \Og_h} + \|\Pi_1(f)\|_{K}
\end{equation*}
as well as \cref{reliability-d_0} and
\begin{equation*}\label{reliability-c_00}
 \|\Pi_1(f)\|_{K}\lesssim \frac{1}{|K|^{1/2}}\int_K|f|\lesssim \frac{|K\cap \Og_h|^{1/2}}{|K|^{1/2}} \|f\|_{K \cap \Og_h} +\frac{|K\cap \Og_h^c|^{1/2}}{|K|^{1/2}} \|f\|_{K \cap \Og_h^c},
\end{equation*}
we obtain the next bound for  $\epsilon$:
\begin{equation}\label{reliability-c_final}
\epsilon^2 \lesssim  \sum_{K \in \cT_h^b} 
	\left( h_K^2\|f\|_{K \cap \Og_h}^2 + h_K^{-2} |\Gamma_K|\|g_h - u_h\|_{\Gamma_K} ^2 +  \sum_{F\in \cE_I \cap \cE_K} |F\cap \Omega_h^c| \|\jump{\partial_{\bn_F} u_h}\|_{F \cap \Og^c_h}^2\right).
\end{equation}
\end{remark}

\begin{remark}
From the above estimate, we observe that $\epsilon$ can be bounded by the classical residual based error estimator. Moreover, the constants are uniformly bounded and independent of the domain-mesh intersection. 
We further note that $\epsilon$ can also be bounded by $\eta$ with an additional higher order oscillation term, 
\[
\mbox{osc}_1 =\sqrt{ h_K^2\|f - \Pi_{0,K \cap \Og_h} f\|_{K \cap \Og_h}^2 
+ h_K^{-d} |\Gamma_K|\|g_h - u_h - \Pi_{0, \Gamma_K} (g_h - u_h)\|_{\Gamma_K} ^2}.
\] Indeed, we could prove by equivalence of norms on finite dimensional space. Firstly note that $\eta=0$ implies $\bsigma_h = \nabla u_h$. Therefore, based on the properties of $\bsigma_h$, we have that $\jump{\nabla u_h \cdot \bn_F}=0$ for all $F \in \cE_I$ and that $f \equiv 0$ in $\cT_h \setminus \cT_h^b$. Moreover, from the first equation in \cref{flux-construction}, we immediately have $\Pi_{0,\Gamma_K}(g_h - u_h) = 0$ on $\Gamma_K$  for $K \in \cT_h^b$. From the second equation in \cref{flux-construction}, we have $\theta_h=0$ and thus, from \cref{residual1},  $\{r_{N}\}=0$ for each $N \in \cN_I$. Finally, the second equation in \cref{estimate-r-N} implies $ \Pi_{0,K \cap \Og_h} f=0$ on $K \in \cT_h^b$. Eventually, we have that $\eta + \mbox{osc}_1 =0$ implies $\epsilon=0$, and, therefore, $\epsilon \lesssim \eta+ \mbox{osc}_1$.
In the numerical computation, we discard the term $2 \| \nabla \tilde e\|_{\Og} + C \epsilon$. 
 $\|\nabla \tilde e\|_{\Og}$ is the so-called boundary correction error which we have thoroughly discussed in \cite{EburmanChe-2020}. It was shown that adding such error does not affect the overall convergence rate as well as the final meshes when the mesh is fine enough.
\end{remark}

\subsection{Efficiency}

\begin{lemma}\label{lem:effi-for-normal-elements}
Let $K$ be a given element in $ \cT_h$.
Then the following local efficiency result holds:
\begin{equation}\label{efficiency-b}
\begin{split}
&\| \bsigma_h -\nabla u_h\|_{K \cap \Omega_h}  \le \| \bsigma_h -\nabla u_h\|_K \lesssim \| \nabla (u-u_h)\|_{\tilde\Delta_K} \\
&+\sum_{N \in \cN_K}\bigg(\sum_{F \in  \cE_N \cap \cE_g} h_F^{1/2}\| \jump{ \partial_{\bn_F} u_h}\|_F+ \!\!\!\! \sum_{K' \in \cT_N \cap \cT_h^b}h_{K'} \|f\|_{K'\cap \Og_h} +  h_{K'}^{-1/2} \|g_h - u_h\|_{\Gamma_{K'}}\bigg),
\end{split}
\end{equation}
where $\tilde \Delta_K$
is a local neighborhood of $K$ that does not contain elements in $\cT_h^b$ and the efficiency constant does not depend on the mesh size nor the domain-mesh intersection.
\end{lemma}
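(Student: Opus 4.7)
The first inequality $\|\bsigma_h-\nabla u_h\|_{K\cap\Omega_h}\le\|\bsigma_h-\nabla u_h\|_K$ is immediate from inclusion of the integration domain. For the second inequality, my plan has three stages. Since $\nabla u_h|_K$ is a constant vector on $K$, the restriction $\bsigma_h|_K-\nabla u_h|_K$ lies in $RT^1(K)$; by the unisolvence of the Raviart--Thomas degrees of freedom and a standard scaling argument on the reference element, $\|\bsigma_h-\nabla u_h\|_K$ is controlled by a weighted sum of the interior degrees of freedom tested against $\bfzeta\in\mathbb P_0(K)^d$ and the edge degrees of freedom tested against $w\in\mathbb P_1(F)$, $F\in\cE_K$. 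Because $K$ is assumed to be a non-cut element (the neighborhood $\tilde\Delta_K$ excludes $\cT_h^b$), the boundary contribution $\langle g_h-u_h,\bfzeta\cdot\bfn_h\rangle_{\Gamma_K}$ and the $\Gamma_K$ edge DOF in \cref{flux-construction} drop out, leaving only three non-trivial sources: the ghost-penalty jumps on $\cE_g\cap\cE_K$ from the interior DOFs, the averaged normal jumps $\{\partial_{\bfn_F}u_h\}-\partial_{\bfn_F}u_h|_K=\pm\tfrac12\jump{\partial_{\bfn_F}u_h}$ on each $F\in\cE_I\cap\cE_K$, and the Lagrange multiplier contribution $b_F(\theta_h,w)$ on the same interior edges.

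Second, I need to translate $b_F(\theta_h,w)$ into the quantities appearing in \cref{efficiency-b}. Writing $\theta_h=\sum_{N\in\cN}\theta_N$ via \cref{equivalence}, only the two endpoints $N_1,N_2\in\cN_K$ of any edge $F\in\cE_K$ contribute to $\theta_h|_F$, since each $\theta_N$ is supported on $\cE_N$. From the second equation in \cref{local-construction}, each $\theta_{N_i}|_F$ vanishes at the other endpoint $N_{3-i}$, so it is determined by its nodal value at $N_i$, which, through the local system \cref{theta-n-1}--\cref{theta-n-2}, is controlled by $\|\{r_{N_i}\}\|/h_F$. Invoking \cref{estimate-r-N-to-theta-N} and \cref{residual-estimate} then bounds $b_F(\theta_h,w)$, after scaling with $w\in\mathbb P_1(F)$, by a sum over $N\in\{N_1,N_2\}$ of face-jump terms on $\cE_N\cap\cE_I$ and local data/boundary residuals on $\cT_N$, with constants independent of the cut configuration thanks to \cref{eq:boundary_trace}. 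Combining with the DOF bound of the previous step produces
\[
\|\bsigma_h-\nabla u_h\|_K\lesssim \sum_{N\in\cN_K}\Big(\sum_{F\in\cE_N\cap\cE_I} h_F^{1/2}\|\jump{\partial_{\bfn_F}u_h}\|_F+\!\!\!\!\sum_{K'\in\cT_N}\big(h_{K'}\|f\|_{K'\cap\Omega_h}+h_{K'}^{-1/2}\|g_h-u_h\|_{\Gamma_{K'}}\big)\Big).
\]

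Third, standard bubble-function efficiency is applied to the non-cut portion of the patch $\bigcup_{N\in\cN_K}\cT_N$ to absorb the harmless contributions into $\|\nabla(u-u_h)\|_{\tilde\Delta_K}$: for every $F\in\cE_I\setminus\cE_g$ whose two adjacent elements are both non-cut, the edge-bubble argument of Verf\"urth gives $h_F^{1/2}\|\jump{\partial_{\bfn_F}u_h}\|_F\lesssim\|\nabla(u-u_h)\|_{\omega_F}$ (with $\omega_F$ the edge patch), using that $\jump{\partial_{\bfn}u}=0$ across interior edges of $\Omega$; and for every $K'\in\cT_N\setminus\cT_h^b$ the element-bubble argument gives $h_{K'}\|f\|_{K'}\lesssim\|\nabla(u-u_h)\|_{K'}$, where no oscillation term appears because of the blanket assumption $f|_{K'}\in\mathbb P_1(K')$ made in this section. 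The terms on $\cE_g\cap\cE_N$ and on $\cT_N\cap\cT_h^b$ cannot be absorbed in this way and are left on the right-hand side, which is exactly the form of \cref{efficiency-b}. The hard step is the second one: because $\theta_h$ is defined only implicitly through coupled local solves, one must carefully unwind the linear algebra of \cref{local-construction} at both interior and boundary nodes and track the scaling so that the efficiency constant remains independent of how $\partial\Omega_h$ cuts the mesh.
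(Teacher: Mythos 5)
Your overall route is the same as the paper's: control $\|\bsigma_h-\nabla u_h\|_K$ through the $RT^1(K)$ degrees of freedom, substitute the defining equations \cref{flux-construction} to reduce everything to $\pm\tfrac12\jump{\partial_{\bn_F}u_h}$, the ghost-penalty terms and $b_F(\theta_h,\cdot)$, bound the multiplier via $b_F(\theta_h,p)=\sum_{N\in\cN\cap F}b_F(\theta_N,p)$ together with \cref{estimate-r-N-to-theta-N} and \cref{residual-estimate}, and finally absorb the contributions from non-cut facets and elements into $\|\nabla(u-u_h)\|_{\tilde\Delta_K}$ by the classical bubble arguments, using $f=\Pi_1(f)$ off the cut region. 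That part of the plan is sound and matches \cref{efficiency-b-a}--\cref{efficiency-a} and \cref{effi-classical}.

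The gap is your premise that ``$K$ is assumed to be a non-cut element'' so that the $\Gamma_K$ contributions ``drop out.'' The lemma is stated for an arbitrary $K\in\cT_h$, including $K\in\cT_h^b$; the clause about $\tilde\Delta_K$ only says that the $H^1$-error is measured over nearby \emph{non-cut} elements (which exist by the mesh assumption $\mathrm{dist}(K,K')=O(h_K)$), not that $K$ itself is uncut. Indeed the right-hand side of \cref{efficiency-b} explicitly retains $h_{K'}^{-1/2}\|g_h-u_h\|_{\Gamma_{K'}}$ over $K'\in\cT_N\cap\cT_h^b$, and these terms originate precisely from the pieces you discard: the edge degree of freedom on $F=\Gamma_K$, for which \cref{flux-construction} gives $\langle(\bsigma_h-\nabla u_h)\cdot\bn_F,p\rangle_F=\tfrac{\beta}{h_K}\langle g_h-u_h,p\rangle_{\Gamma_K}$ and hence, after the $h_F^{1/2}$ weighting, a term $h_K^{-1/2}\|g_h-u_h\|_{\Gamma_K}$; and the interior degree of freedom, where $\langle g_h-u_h,\bfzeta\cdot\bn_h\rangle_{\Gamma_K}$ must be estimated by the irregular trace inequality \cref{eq:boundary_trace} (with constant independent of the cut) to yield the same quantity in \cref{efficiency-f}. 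Your argument as written proves the lemma only for $K\notin\cT_h^b$ (where, note, the $\cT_N\cap\cT_h^b$ and $\cE_N\cap\cE_g$ sums can still be nonempty through cut neighbours, and your treatment of those via \cref{residual-estimate} is fine), but it omits exactly the boundary-element case that makes this a CutFEM-specific result; you need to add the two $\Gamma_K$ estimates above, as in \cref{efficiency-c} and \cref{efficiency-f} of the paper, to cover general $K$.
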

\begin{proof}
By the degrees of freedom for $RT^1(K)$ space, we have the following bound:
\begin{equation}\label{efficiency-b-a}
\|   \bsigma_h-\nabla u_h\|_K \lesssim
	\sum_{F \in \cE_K}h_F^{1/2} \|(  \bsigma_h-\nabla u_h) \cdot \bn_F\|_F
	+ \| \Pi_0( \bsigma_h-\nabla u_h) \|_K,
\end{equation}
where $\Pi_0$ is the $L^2$ projection onto the piecewise constant space on $\cT_h$. Let $F\in \cE_K$ and $p\in \mathbb P_1(F)$ arbitrary, then from \cref{flux-construction} we have:

\begin{equation}\label{efficiency-c}
\begin{split}
	\left< (  \bsigma_h -\nabla u_h) \cdot \bn_F, p \right>_F =&
	\left<   \{ \partial_{\bn_F} u_h\} -\nabla u_h \cdot \bn_F, p \right>_F - b_F(\theta_h,p)\\
	\le& \|   \jump {\partial_{\bn_F} u_h}\|_F \|p\|_F- b_F(\theta_h,p) \quad \mbox{if } F \in \cE_I,\\
	\left< (  \bsigma_h -\nabla u_h) \cdot \bn_F, p \right>_F =&0 \quad \mbox{if } F \in \cE_{\partial}\setminus \Gamma_K,\\
	\left< (  \bsigma_h -\nabla u_h) \cdot \bn_F, p \right>_F =&\dfrac{\beta}{h_K}\left<g_h -u_h, p\right>_{\Gamma_K} \quad \mbox{if } F = \Gamma_K.
\end{split}
\end{equation}

Then applying Cauchy-Schwartz inequality and \cref{residual-estimate} gives, for $F\in \cE_I$,
\begin{equation}\label{efficiency-d}
\begin{split}
	&b_F(\theta_h,p) = \sum_{N\in \cN_F}b_F(\theta_{N}, p) 
 \lesssim  h_F^{-1/2} \left(\sum_{N\in \cN_F}\|\{\tilde{\theta}_{N}\}\| \right)  \|p\|_F\\
	&\lesssim  
	\bigg(\underset{F' \in \underset{N\in \cN_F}{\bigcup}\cE_{N} \cap \cE_I}{\sum} \| \jump{ \partial_{\bn_{F'}} u_h}\|_{F'}
	+
	\sum_{K \in\underset{N\in \cN_F}{\bigcup}\cT_{N} } \left( h_K^{1/2}\|f\|_{K\cap\Og_h} 
	+h_K^{-1} \|g_h - u_h\|_{\Gamma_K}  \right)\bigg) \|p\|_F.
\end{split}
\end{equation}
Combining \cref{efficiency-c} and \cref{efficiency-d} gives, for any $F\in \cE$, 
\begin{equation}\label{efficiency-e}
\begin{split}
	&\|(\bsigma_h-\nabla u_h) \cdot \bn_F\|_F \le 
	\sup_{p \in \mathbb P_1(F)} \frac{\left< (  \bsigma_h -\nabla u_h) \cdot \bn_F, p \right>_F }{ \|p\|_F}\\
	\lesssim & 
	\sum_{F' \in \underset{N\in \cN_F}{\bigcup}\cE_{N} \cap \cE_I} \| \jump{ \partial_{\bn_{F'}} u_h}\|_{F'}
	+ \sum_{K \in\underset{N\in \cN_F}{\bigcup}\cT_{N} } \left( h_K^{1/2}\|f\|_{K\cap\Og_h} 
	+h_K^{-1} \|g_h - u_h\|_{\Gamma_K}  \right).
\end{split}
\end{equation}
By the definition of $\bsigma_h$ in \cref{flux-construction} and Cauchy-Schwartz inequality, we also have
\begin{equation}\label{efficiency-f}
\begin{split}
\| \Pi_0(\bsigma_h-\nabla u_h) \|_K 
=& \sup_{p \in \mathbb{P}_1(K)}\frac{(\bsigma_h-\nabla u_h, \nabla p)_K }{ \|\nabla p\|_K}\\
	\lesssim&
	\sum_{F \in \cE_{g} \cap \cE_K } h_F^{1/2} \| \jump{\partial_{\bn_F} u_h} \|_F + h_K^{-1/2} \|g_h - u_h\|_{\Gamma_K}.
	\end{split}
 \end{equation}
 Combining \cref{efficiency-b-a}, \cref{efficiency-e} and  \cref{efficiency-f} , we have
 \begin{equation}\label{efficiency-a}
 \begin{split}
&\| \bsigma_h-\nabla u_h\|_K \\
\lesssim&
\sum_{N \in \cN_K}\left(\sum_{F \in  \cE_N \cap \cE_I} h_F^{1/2}\| \jump{ \partial_{\bn_F} u_h}\|_F+ \sum_{K' \in \cT_N}h_{K'} \|f\|_{K'\cap\Og_h} 
+ h_{K'}^{-1/2} \|g_h - u_h\|_{\Gamma_{K'}} \right).
\end{split}
\end{equation}

For regular facets and elements, there holds the following classical local efficiency results (see \cite{verfurth1994posteriori,cai2017residual}):
	\begin{equation}\label{effi-classical}
	\begin{split}
	h_F^{1/2}\| \jump{ \partial_{\bn_F} u_h}\|_F &\lesssim \| \nabla (u -u_h)\|_{K_F^+ \cup K_F^-} + \|h_K(f - \Pi_1(f))\|_{K_F^+ \cup K_F^-}
	\quad \forall F \in \cE_I \setminus \cE_g,\\
	h_K \|f\|_K &\lesssim \| \nabla (u -u_h)\|_{K} +h_K\|f - \Pi_1(f)\|_{K} \quad \forall K \in \cT_h \setminus \cT_h^b.
	\end{split}
	\end{equation}
\cref{efficiency-b} is then a direct consequence	of \cref{efficiency-a}, \cref{effi-classical} and of the hypothesis $f=\Pi_1(f)$ on any $K \in \cT_h \setminus \cT_h^b$.
This completes the proof of the lemma.
\end{proof}

The following lemma, which follows from theorem 4.5 in \cite{EburmanChe-2020}, gives the efficiency result for the irregular error terms. 
Define
\[
 \mbox{osc}(f) = \left( \sum_{K \in \cT_h^b} h_K^2 \left( \| f - f_K\|_{\omega_K \cap \Omega_h}^2  
 + \|f\|_{ (\Omega_h \setminus \Omega) \cap K}^2\right)\right)^{1/2},
\]
where $\omega_K$ is the union of all elements sharing a common vertex with $K$ and 
$f_K = \underset{c \in R}{\mbox{argmin}} \, h_K\| f -
c\|_{\omega_K \cap \Omega_h}$.
\begin{lemma}\label{lem:best-approximation}
We have the best approximation result for the irregular terms
	\begin{equation}\label{effi-for-irregular-a}
	\begin{split}
	&j_h(u_h,u_h) + \sum_{K \in \cT_h^b} h_K^2 \|f\|_{K\cap \Og_h}^2 +
	\sum_{K \in \cT_h^b} h_{K}^{-1}\| u_h - g_h\|^2_{\Gamma_{K}} \\
	\le&  C_e\,
	 \inf_{v_h \in CG_h} \left( \tn u - v_h \tn^2 + j(v_h,v_h) + \sum_{K \in \cT_h^b} h_{K}^{-1}\| v_h - g_h\|^2_{\Gamma_{K}}+\mbox{osc}(f)^2
	 \right),
	\end{split}
	\end{equation}
where the constant $C_e$ does not depend on the mesh size nor the domain-mesh intersection and $\mbox{osc}(f)$ can be regarded as a higher order oscillation term.
\end{lemma}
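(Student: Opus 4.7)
The plan is to bound each of the three contributions on the left-hand side of \cref{effi-for-irregular-a} in terms of the energy error $\tnorm{u - u_h}_h$ plus a data-oscillation remainder, and then to invoke the quasi-optimality of the CutFEM scheme to reduce $\tnorm{u - u_h}_h^2$ itself to the right-hand side. For the ghost-penalty contribution, any sufficiently smooth extension of $u$ to $\triangle_h$ satisfies $j_h(u,u)=0$, so by the triangle inequality and the fact that $\tnorm{\cdot}_h$ in \cref{def:ene_norm_h} already contains $j_h(\cdot,\cdot)$, we obtain $j_h(u_h,u_h)^{1/2} \le j_h(u_h-u,u_h-u)^{1/2}\le\tnorm{u - u_h}_h$. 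For the boundary mismatch, writing $u_h - g_h = (u_h - v_h) + (v_h - g_h)$ for any $v_h \in CG_h$ and using that $\tnorm{\cdot}_h$ controls $\sum_{K\in\cT_h^b} h_K^{-1}\|\cdot\|^2_{\Gamma_K}$, followed by the further splitting $u_h - v_h = (u_h - u) + (u - v_h)$, expresses this term as the sum of the energy error near the boundary and $\sum_K h_K^{-1}\|v_h - g_h\|^2_{\Gamma_K}$, which already has the shape of the right-hand side.

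The main obstacle is the middle term $h_K^2\|f\|^2_{K\cap\Og_h}$ on cut elements. On a fitted element, the classical Verf\"urth bubble-function argument (test $-\Delta u - f = 0$ against $b_K \Pi_1 f$, integrate by parts, use Galerkin orthogonality, and apply inverse inequalities) produces $h_K\|f\|_K \lesssim \|\nabla(u - u_h)\|_K + h_K\|f - \Pi_1 f\|_K$. For $K \in \cT_h^b$, however, $K \cap \Og_h$ may be arbitrarily small and $K$ may not support a usable interior bubble, so the standard element-local argument fails. Following \cite[Theorem~4.5]{EburmanChe-2020}, I would instead support the test function on a larger patch surrounding $K$, which under the cut-mesh regularity hypothesis always contains at least one uncut element and therefore has measure uniformly comparable to $h_K^d$, and absorb the extra stabilization and boundary contributions that appear in the CutFEM Galerkin identity into the $j_h(u_h,u_h)$ and boundary mismatch pieces already handled. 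This yields a local estimate of the form $h_K\|f\|_{K\cap\Og_h} \lesssim \|\nabla(u - u_h)\|_{\omega_K} + h_K\|f - f_K\|_{\omega_K\cap\Og_h} + h_K\|f\|_{(\Og_h\setminus\Og)\cap K}$, whose last two summands are precisely the components of $\mbox{osc}(f)$.

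The final ingredient is the CutFEM quasi-optimality bound $\tnorm{u - u_h}_h^2 \lesssim \inf_{v_h \in CG_h}\bigl(\tn u - v_h\tn^2 + j_h(v_h, v_h) + \sum_{K\in\cT_h^b} h_K^{-1}\|v_h - g_h\|^2_{\Gamma_K} + \mbox{osc}(f)^2\bigr)$. I would prove this by the standard C\'ea argument: setting $w_h := u_h - v_h \in CG_h$, invoking coercivity \cref{coecivity} to get $\tnorm{w_h}_h^2 \lesssim a_h(w_h,w_h) = l_h(w_h) - a_h(v_h,w_h)$, and then using integration by parts on $-\Delta u = f$ (together with the trace inequalities \cref{eq:standard_trace}--\cref{eq:boundary_trace} and the Poincar\'e-type inequality \cref{poincare} to cope with cut elements and the $\Og_h \setminus \Og$ data extension) to produce an approximate Galerkin identity $l_h(w_h) = a_h(u,w_h) + R(w_h)$ with $|R(w_h)| \lesssim \mbox{osc}(f)\,\tnorm{w_h}_h$; the continuity of $a_h$ then handles $a_h(u - v_h, w_h)$ and furnishes the stated Céa estimate. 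Combining the three term-wise bounds with this quasi-optimality estimate closes the proof of \cref{effi-for-irregular-a}.
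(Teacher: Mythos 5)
The paper does not prove this lemma at all: it is imported verbatim from Theorem 4.5 of \cite{EburmanChe-2020}, so there is no in-paper argument to compare against. Measured against what that cited proof must contain, your architecture is the right one — term-wise reduction of the three irregular quantities, a patch-based (rather than element-local) bubble argument for $h_K\|f\|_{K\cap\Og_h}$ on cut elements, and a C\'ea/consistency step to convert the energy error into the best-approximation form — and you correctly identify why the two components $h_K\|f-f_K\|_{\omega_K\cap\Og_h}$ and $h_K\|f\|_{(\Og_h\setminus\Og)\cap K}$ of $\mbox{osc}(f)$ arise.

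That said, the sketch asserts rather than carries out precisely the steps that make this result nontrivial for CutFEM. First, the right-hand side is phrased in the continuous norm $\tn u-v_h\tn$ of \cref{def:ene_norm}, which lives on $\Og$ and $\partial\Og$, while every quantity you control along the way ($\tnorm{u-u_h}_h$, $h_K^{-1}\|u-v_h\|^2_{\Gamma_K}$, $\|\nabla(u-u_h)\|_{\omega_K}$) lives on $\Og_h$, $\partial\Og_h$ or $\triangle_h$; passing between the two requires an extension of $u$, the lift $\bfp_h$ and assumption \cref{eq:geomassum-a}, none of which your plan invokes. Second, your treatment of the ghost penalty uses only $j_h(u,u)=0$, which makes $j(v_h,v_h)$ on the right-hand side look superfluous; in fact that term is needed because the C\'ea step produces contributions of discrete functions on the exterior strip $\Og_h\setminus\Og$ and on arbitrarily small cut portions $K\cap\Og_h$, and the ghost penalty is the only mechanism that controls these uniformly in the cut configuration — omitting this is the one point at which a literal execution of your plan would stall. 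Third, the claim that the Galerkin remainder satisfies $|R(w_h)|\lesssim \mbox{osc}(f)\,\tnorm{w_h}_h$ is too strong: the consistency error also contains boundary and geometry mismatch terms (e.g.\ $\langle\partial_{\bfn}u,w_h\rangle_{\partial\Og}$ versus the Nitsche terms on $\partial\Og_h$) that must instead be absorbed into $\tn u-v_h\tn^2$ and $\sum_{K}h_K^{-1}\|v_h-g_h\|^2_{\Gamma_K}$. These are bookkeeping issues resolved in \cite{EburmanChe-2020}, so citing that theorem (as the paper does) or filling in these three points would complete your argument.
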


%



Thanks to the previous result, we can easily deduce an error bound for the flux error $ \|\bsigma-\bsigma_h\|_{\Omega}$, where  $\bsigma=\nabla u $. 

\begin{lemma}\label{thm:ConvHFlux}
Assume $u\in H^{2}(\Omega)$. Then one has:
\begin{equation}\label{ConvHFlux}
\begin{split}
&\|\bsigma-\bsigma_h\|_{\Omega} \lesssim | u- u_h|_{1,\Og} + h|u|_{2,\Og}  \\
&+C_e\,
	 \inf_{v_h \in CG_h} \left( \tn u - v_h \tn^2 + j(v_h,v_h) + \sum_{K \in \cT_h^b} h_{K}^{-1}\| v_h - g_h\|^2_{\Gamma_{K}}+\mbox{osc}(f)^2 \right)
\end{split}
\end{equation}
\end{lemma}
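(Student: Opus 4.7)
The natural approach is to split $\bsigma-\bsigma_h = (\nabla u - \nabla u_h)+(\nabla u_h-\bsigma_h)$ and apply the triangle inequality, so that
$$\|\bsigma-\bsigma_h\|_{\Omega}\le |u-u_h|_{1,\Omega}+\|\nabla u_h-\bsigma_h\|_{\Omega}.$$
The first summand already appears in the bound to be proved, so the task reduces to controlling the flux-reconstruction residual $\|\nabla u_h - \bsigma_h\|_{\Omega}$.

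Since $\Omega\subset \Omega_h\subset \triangle_h$, I would pass to the active mesh and work element by element: $\|\nabla u_h-\bsigma_h\|_{\Omega}^2 \le \sum_{K\in\cT_h}\|\nabla u_h-\bsigma_h\|_K^2$. On each $K$, \cref{lem:effi-for-normal-elements} yields a bound by $\|\nabla(u-u_h)\|_{\tilde \Delta_K}$ plus three families of irregular contributions: ghost-penalty jumps over $\cE_g$, element residuals $h_{K'}\|f\|_{K'\cap\Omega_h}$ for $K'\in \cT_h^b$, and boundary mismatches $h_{K'}^{-1/2}\|g_h-u_h\|_{\Gamma_{K'}}$. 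After squaring and summing, shape regularity provides bounded overlap both of the neighborhoods $\tilde\Delta_K$ and of the vertex patches $\cT_N$, so these three contributions collapse, up to a constant, to exactly the three quantities $j_h(u_h,u_h)$, $\sum_{K\in\cT_h^b}h_K^2\|f\|_{K\cap\Omega_h}^2$ and $\sum_{K\in\cT_h^b}h_K^{-1}\|g_h-u_h\|_{\Gamma_K}^2$ that sit on the left-hand side of \cref{effi-for-irregular-a}. Invoking \cref{lem:best-approximation} then absorbs all three sums into the term $C_e\inf_{v_h\in CG_h}(\cdots)$ appearing in the desired estimate.

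It remains to handle the regular part $\sum_K\|\nabla(u-u_h)\|_{\tilde\Delta_K}^2$. Because $\tilde\Delta_K$ is built from elements of $\cT_h\setminus\cT_h^b$, which lie at distance $\gtrsim h$ from $\partial\Omega_h$, and because $\partial\Omega$ and $\partial\Omega_h$ are $O(h)$-close by the standing geometric assumption, each $\tilde\Delta_K$ is contained in $\Omega$ for $h$ sufficiently small; bounded overlap then gives $\sum_K\|\nabla(u-u_h)\|_{\tilde\Delta_K}^2\lesssim |u-u_h|_{1,\Omega}^2$. The additional $h|u|_{2,\Omega}$ in the statement covers the residual geometric mismatch, namely the contribution from any $O(h)$-wide boundary strip that a given $\tilde\Delta_K$ might still omit; there, $u$ is replaced by a Stein $H^2$-extension $\tilde u$ and the difference is controlled by a trace/Poincar\'e estimate on the strip, which produces precisely the $h|u|_{2,\Omega}$ contribution.

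The main obstacle will be clean bookkeeping: verifying that the vertex-patch, edge-patch, and ghost-set sums produced by \cref{lem:effi-for-normal-elements} can be re-indexed, with constants independent of the cut-configuration, into the element-indexed sums controlled by \cref{lem:best-approximation}; and ensuring that the geometric overflow in the regular part is absorbed correctly by the $h|u|_{2,\Omega}$ term. No new analytic ingredient beyond these two previously established lemmas is needed.
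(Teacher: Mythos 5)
Your overall skeleton --- triangle inequality, local efficiency of the reconstruction, and \cref{lem:best-approximation} for the cut-element terms --- is the same as the paper's, and your treatment of the irregular contributions (ghost jumps, $h_{K'}\|f\|_{K'\cap\Omega_h}$, $h_{K'}^{-1/2}\|g_h-u_h\|_{\Gamma_{K'}}$, re-indexed by bounded overlap into the left-hand side of \cref{effi-for-irregular-a}) matches the paper exactly. The genuine divergence is in the regular part. The paper does \emph{not} invoke the packaged statement \cref{efficiency-b} of \cref{lem:effi-for-normal-elements}; it returns to the intermediate estimate \cref{efficiency-a}, which still exhibits the jump terms $h_F^{1/2}\|\jump{\partial_{\bn_F}u_h}\|_F$ on regular interior facets, and bounds those directly in \cref{optimal-c_2} by inserting the Lagrange interpolant $R_hu$ and using that $\jump{\partial_{\bn_F}u}=0$ for $u\in H^{2}$. \emph{That} is the source of the $h|u|_{2,\Omega}$ term, and it is also why the paper can remark that this proof does not require $f|_K\in\mathbb P_1(K)$ on uncut elements --- an assumption on which \cref{efficiency-b} does rely through \cref{effi-classical}.

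Consequently, your account of where $h|u|_{2,\Omega}$ enters is incorrect, and the substitute step you propose does not hold up: the patches $\tilde\Delta_K$ consist of uncut elements of the \emph{active} mesh, which need not lie in $\Omega$ (only in $\Omega_h$, since ``uncut'' refers to $\partial\Omega_h$ and $\Omega\subset\Omega_h$), and on a protruding piece $\tilde\Delta_K\setminus\Omega$ no trace/Poincar\'e estimate controls $\|\nabla(\tilde u-u_h)\|$ by $h|u|_{2,\Omega}$ --- neither $\|\nabla\tilde u\|$ nor $\|\nabla u_h\|$ is $O(h)$ on an $O(h)$-wide strip, so the claim essentially asserts what is to be proved. (The paper tacitly uses an extension of $u$ to $\Omega_h$ in \cref{optimal-c_2} as well; the point is that $h|u|_{2,\Omega}$ is an interpolation error, not a geometric-mismatch correction.) If you replace your boundary-strip paragraph by the argument \cref{optimal-c_2} applied to the regular facet jumps appearing in \cref{efficiency-a}, together with the second line of \cref{effi-classical} for the regular element residuals, the remainder of your proof goes through as written.
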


\begin{proof}
We have, using that $\Omega\subset \Omega_h$, that
\[	
\| \bsigma - \bsigma_h\|_\Og \le \|\nabla u- \nabla u_h\|_{\Og} + \| \bsigma_h-\nabla u_h \|_{\Og_h}.
\]
It is therefore sufficient to bound $\| \bsigma_h-\nabla u_h\|_{K} $ for any $K\in\cT_h$. 
For this purpose, we use \cref{efficiency-a}. 
The triangle inequality together with norm equivalence  in a discrete space and standard interpolation results give, for any $F\in \cE_I\setminus \cE_g$, that: 
\begin{equation}\label{optimal-c_2}
\begin{split}
 h_F^{1/2} \| \jump{ \partial_{\bn_{F}} (u-u_h)}\|_{F}  &\lesssim  h_F ^{1/2}\| \jump{ \partial_{\bn_{F}} (u-R_h u)}\|_{F} +  \vert R_h u-u_h\vert_{1,K_F^+\cup K_F^- }\\
 & \lesssim \vert u-u_h\vert_{1,K_F^+\cup K_F^- } +\vert u-R_h u\vert_{1,K_F^+\cup K_F^- } +h \vert u\vert_{2,K_F^+\cup K_F^- } \\
 &\lesssim  \vert u-u_h\vert_{1,K_F^+\cup K_F^- } + h \vert u\vert_{2,K_F^+\cup K_F^- },
 \end{split}
\end{equation}
where $R_h$ is the continuous, piecewise linear  Lagrange interpolation operator.
Together  with \cref{efficiency-a}, the second estimate of \cref{effi-classical} and \cref{lem:best-approximation}, this gives \cref{ConvHFlux} which completes the proof of the lemma. Note that in the proof, we do not need the requirement that $f$ is piecewise linear.
\end{proof}
\begin{remark}

When $\Omega = \Omega_h$, we refer to \cite{BH12} for the a priori error estimate of $\|\nabla( u -u _h)\|_{\Omega} \lesssim h |u|_{2,\Omega}$. In the case when $\Omega \neq \Omega_h$, the same order can be achieved but with some additional inconsistency error of higher order regarding the geometry approximation, which can be  bounded using similar techniques to \cite{EburmanChe-2020}.
\end{remark}

\section{Numerical results}\label{sec:6}
In this section, we present several numerical examples to validate the performance of the a posteriori error estimator in the adaptive mesh refinement procedure.  The adaptive mesh refinement procedure is set as follows:
\[
	\mbox{Solve} \rightarrow \mbox{Estimate} \rightarrow  \mbox{Mark} \rightarrow \mbox{Refine} \rightarrow \mbox{Solve}.
\]
For the penalty parameters in the finite element method, we set $\beta =10$ and $\gamma =0.1$. For the refinement strategy, we use the D\"orfler marking strategy \cite{dorfler1996convergent} and the refinement rate is set to be ten percent.
Regarding the domain approximation,
let $\rho$ be the level set function that satisfies $\rho=0$ on $\partial \Og$ and negative (positive) inside (outside) the domain $\Og$. Let $\rho_h$ be the nodal interpolation of $\rho$ with respect to $\cT_{0,h}$. Then we define 
\begin{equation}\label{domain-approx}
	\partial \Og_h = \{ \bfx: \rho_h(\bfx) =0\}.
\end{equation} 
We can easily check that \cref{eq:geomassum-a} holds.


In the adaptive procedure, we compare the error estimators $\eta_1$ and $\eta_2$ defined in \cref{estimator-a} with the residual based error estimator (see \cite{EburmanChe-2020}) defined as follows,
\begin{equation} \label{indicator}
	\eta_{K,res} =   \sqrt{h_K^2\| f \|_{K \cap \Og_h}^2 +
	h_K^{-1} \beta^2 \|g_h-u_h\|_{\Gamma_K}^2 
	+
	 \sum_{F \in \cE_K \cap \cE_I}\dfrac{h_F}{2} \|\jump{\partial_{\bn_F} u_h}\|_{F}^2}.
\end{equation}
The global residual based error estimator is then defined by
\begin{equation} \label{estimator}
\eta_{res} = \left( \sum_{K \in \cT_h} \eta_{K,res}^2   \right)^{1/2}.
\end{equation}

\begin{example}\label{ex2a}
In this example, we test a problem with a strong interior peak. The exact solution has the following representation:
\[
u(x,y) = \exp( -100 ((x - 0.5)^2 + (y- 0.5)^2)).
\]
This function has a strong peak at the point $(0.5, 0.5)$.
\end{example}
Note that the boundary of the domain is regular, thus we have that $\eta_{K,1} = \eta_{K,2}$. Moreover, the function value is very smooth and almost vanishes on the boundary. The purpose of this example is to test the efficacy of our adaptive algorithm for Nitsche's method on a regular domain.

In the numerical scheme, $g$ and $f$ are  approximated by their interpolations into the continuous piecewise linear space. 
We firstly test the convergence of the method on uniform meshes. The results are plotted in \cref{Ex2a}(e) which show optimal convergence rates (order $1$) for both the true error $\|\nabla (u-u_h)\|$ and the flux error $\| \nabla u - \bsigma_h\|$.

In the adaptive mesh refinement (AMR) procedure, we start with a $5 \times 5$ initial mesh. The marking percent is set to be $25\%$, i.e., the ordered elements (from the one with largest error indicator) that accounts for the first $25\%$ of the total error estimator get refined. With the stopping criteria that the total number of degree of freedoms (DOFs) be not greater than $5000$, the final meshes generated by $\eta_{1,K}$ and $\eta_{res,K}$ are provided in \cref{Ex2a}(a) and \cref{Ex2a}(b).
\begin{figure}[ht]
\centering
\begin{tabular}{ccc}
{\includegraphics[width=0.30\textwidth]{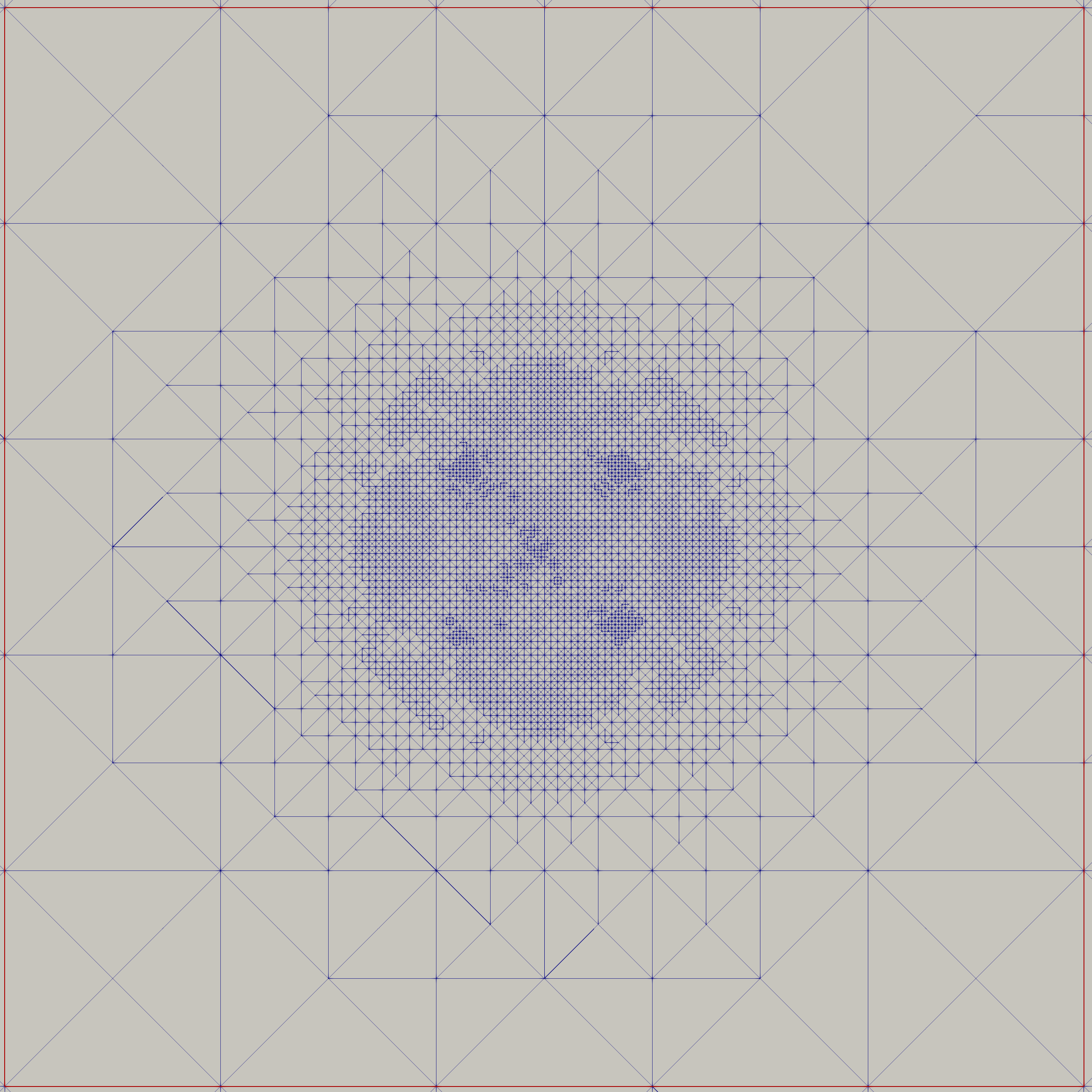} }&
{\includegraphics[width=0.30\textwidth]{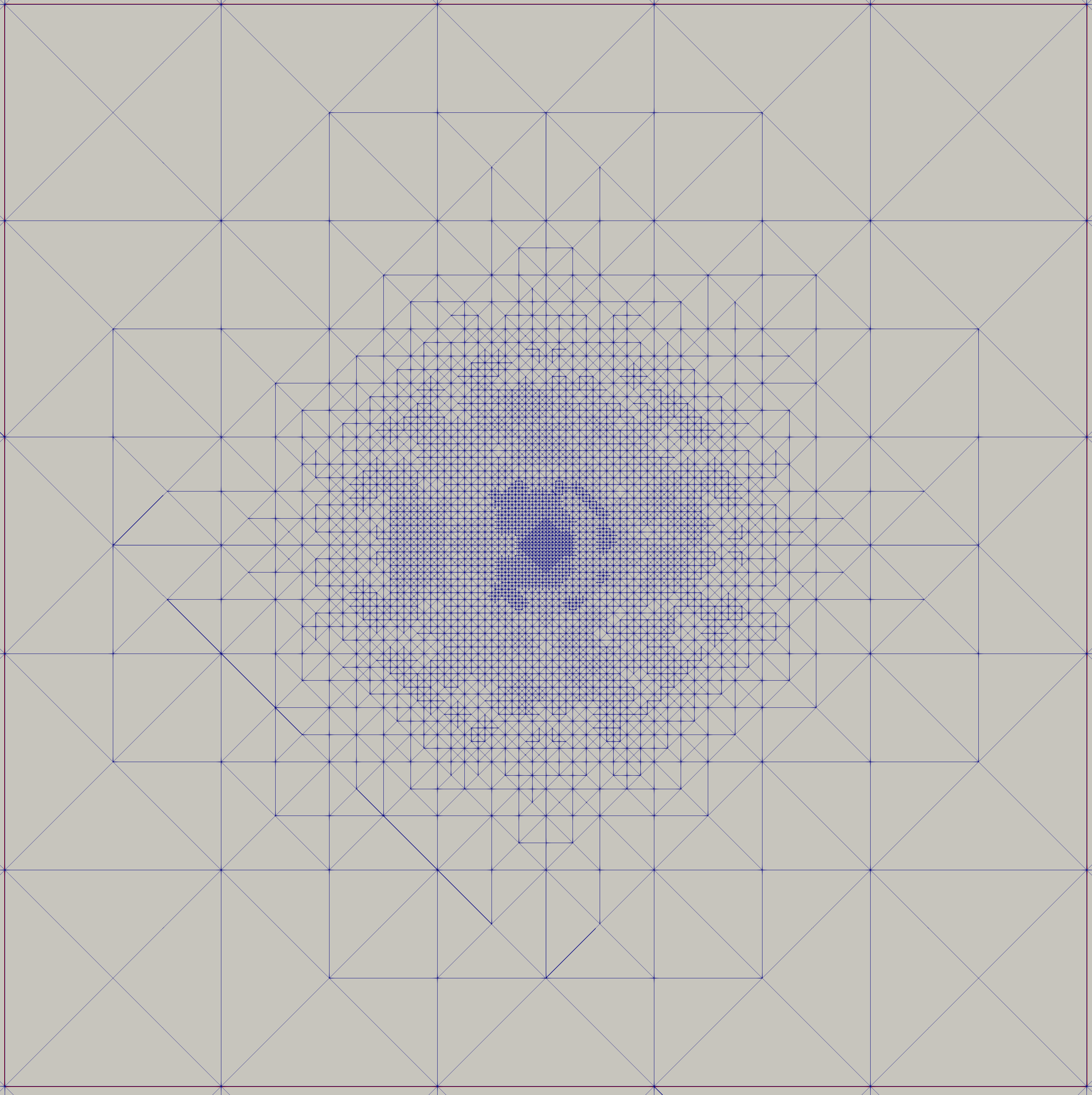}}&\\
(a)mesh by $\eta_{1,K}$ &(b)mesh by $\eta_{res,K}$&\\
{\includegraphics[width=0.30\textwidth]{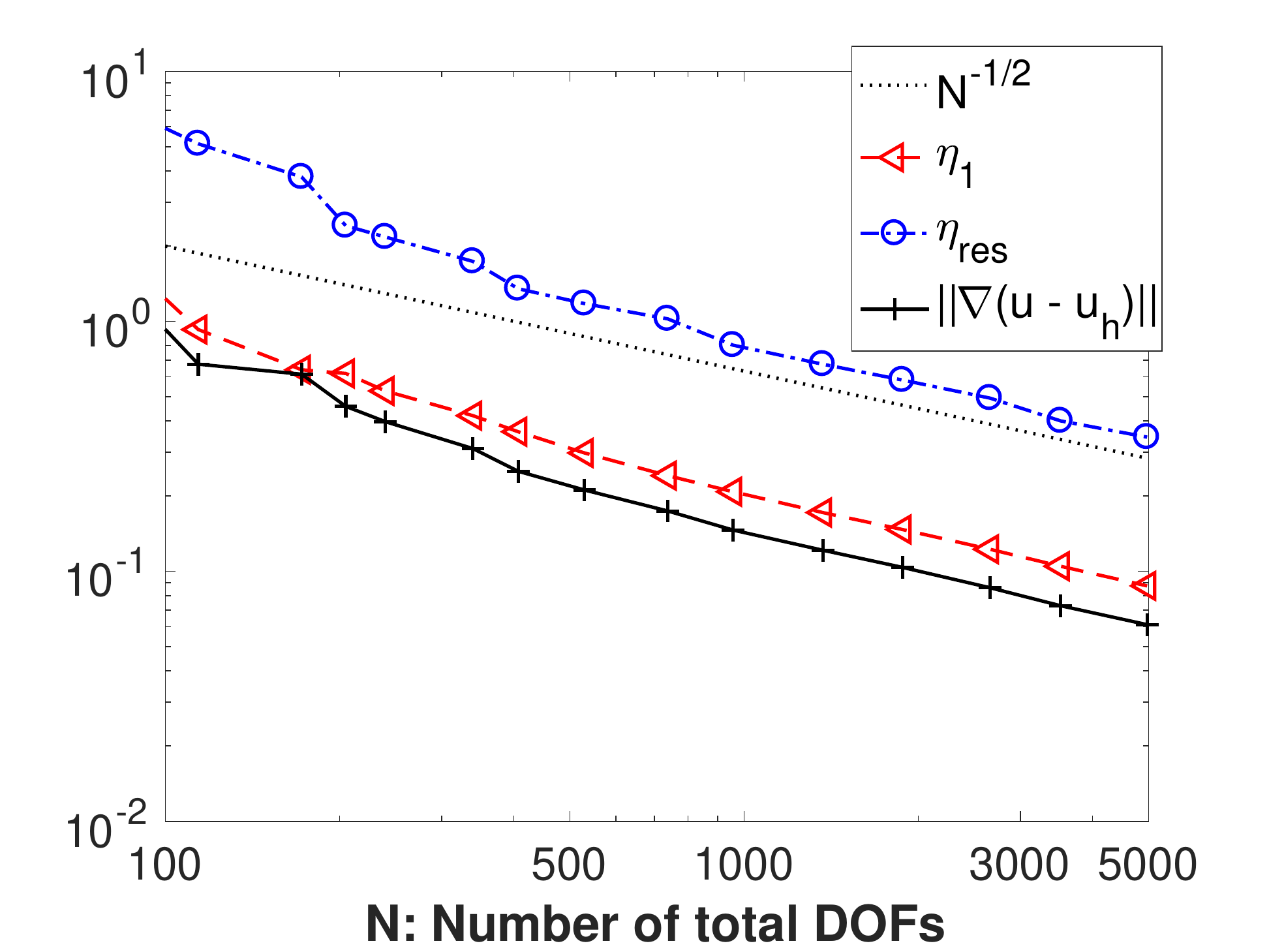} }&
{\includegraphics[width=0.30\textwidth]{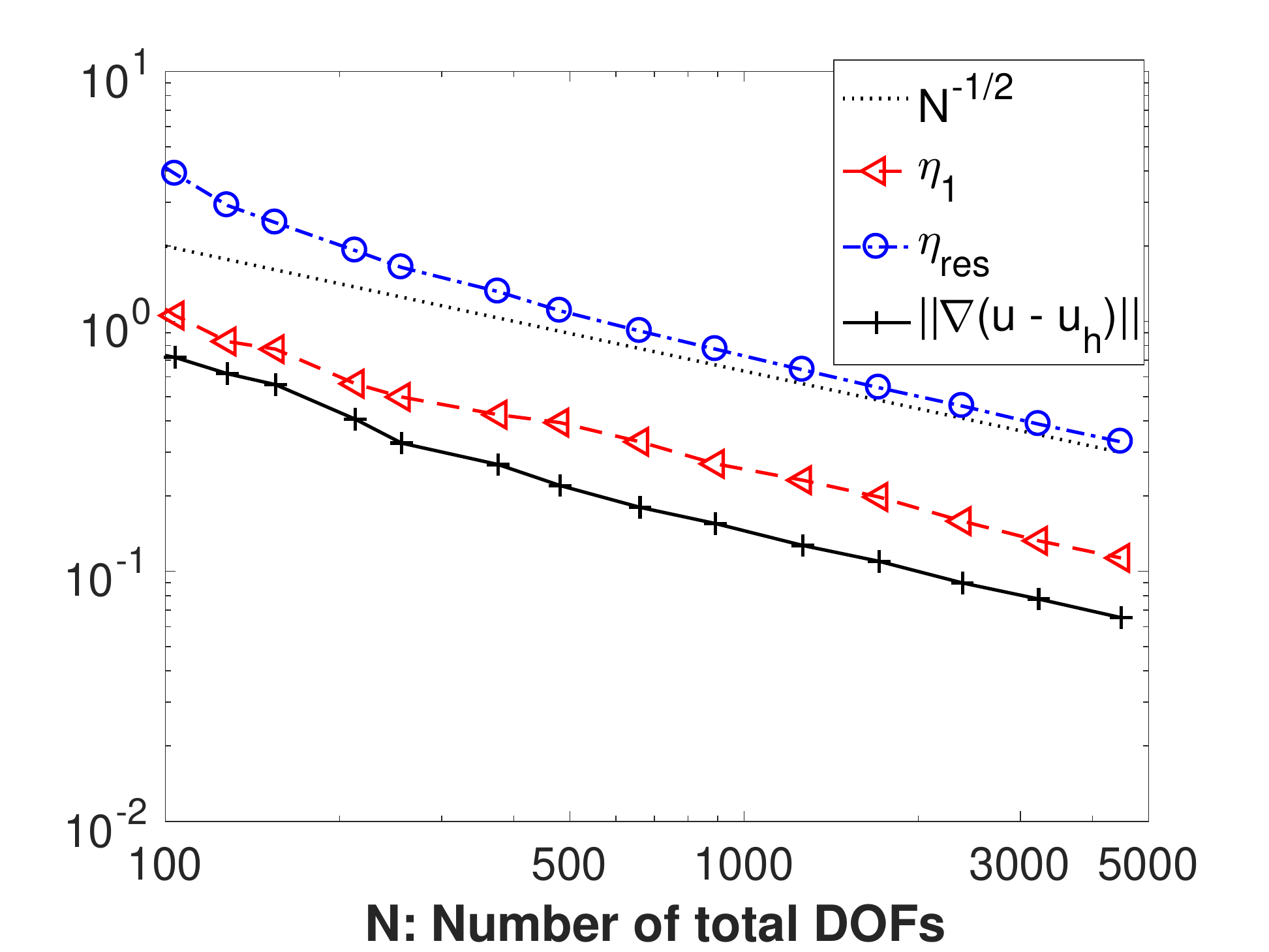}}&
{\includegraphics[width=0.30\textwidth]{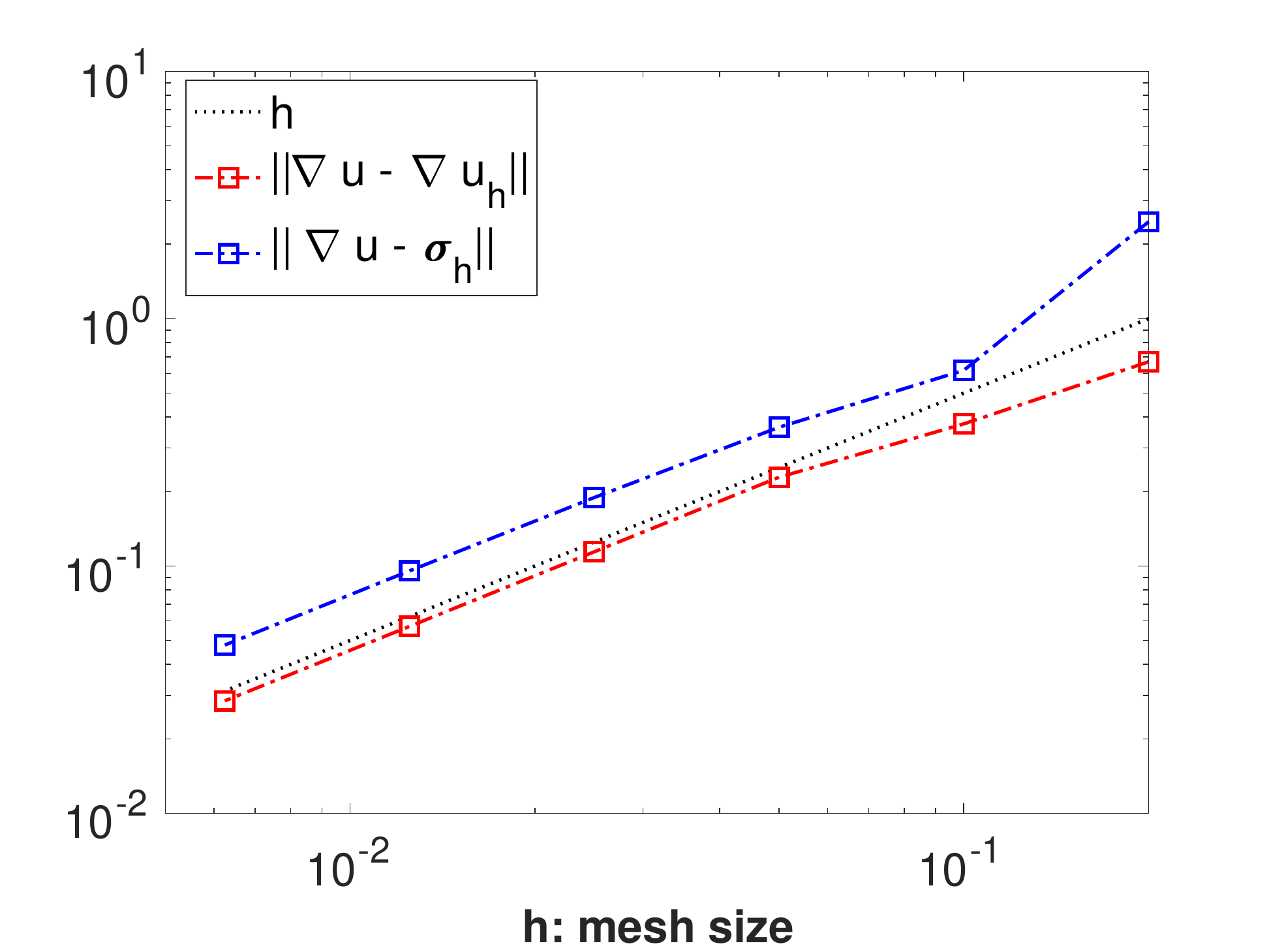}}\\
(c)Errors by $\eta_{1,K}$ &(d)Errors by $\eta_{res}$& (e)On uniform meshes\\
\end{tabular}
\caption{\cref{ex2a}. Final meshes and convergence of error estimators} 
 \label{Ex2a}
\end{figure}
From \cref{Ex2a}(c) and \cref{Ex2a}(d), we observe optimal convergence rates for both estimators. However, the efficiency index, which is defined by $\dfrac{\eta}{\| \nabla (u -u_h)\|}$, of $\eta_1$ is more accurate with mean values $1.42$ and $1.68$ for  \cref{Ex2a}(c)  and \cref{Ex2a}(d), respectively,  comparing to that of $\eta_{res}$ with mean values $5.75$ and $5.10$ for  \cref{Ex2a}(c)  and  \cref{Ex2a}(d), respectively.

\begin{example}\label{ex2}
In this example, we test the Franke function \cite{franke1979critical} on the unit square domain,
\begin{equation*}
\begin{split}
	u(x,y)  =&\dfrac{3}{4} \exp{\left(-(9x-2)^2/4 - ( 9y-2)^2/4\right)} 
	+ \dfrac{3}{4} \exp{(-(9x+1)^2/49 - (9y+1)/10)}\\
	&+\dfrac{1}{2}\exp{(-(9x-7)^2/4 - (9y-3)^2/4)}
	-\dfrac{1}{5} \exp{(-(9x-4)^2 - (9y-7)^2)}.
\end{split}
\end{equation*}
This function has two peaks at $(2/9, 2/9)$ and $(7/9,1/3)$ and one sink at $(4/9, 7/9)$. 
Since the boundary of the domain is regular, the purpose of this example is again to test the efficacy of our algorithm for Nitsche's method on regular domain. However, the solution on the boundary is more volatile than in \cref{ex2a} and our numerical results show that this boundary volatility potentially causes extra challenges for the efficiency of Nitsche's method that imposes the Dirichlet boundary condition weakly.
\end{example}
The optimal convergence results on uniform meshes for the errors $\| \nabla (u - u_h)\|$ and $\| \bsigma - \bsigma_h\|$ are verified in \cref{Ex2}(e). 
With the same initial mesh and marking strategy as in \cref{ex2a}, and with the stopping criteria that the total number of DOFs be not greater than $7500$, the final meshes generated using $\eta_{1,K}$ and $\eta_{res,K}$ are provided respectively in \cref{Ex2}(a) and (b). Both meshes are similar with DOFs centered around the peaks and sinks. Since the solution is more volatile on some parts of the boundary, we also observe dense refinements on some right and upper parts of the boundary. However, the mesh in \cref{Ex2}(a) puts relatively more DOFs on the boundary comparing to  \cref{Ex2}(b) on the boundary.

From \cref{Ex2}(c) and \cref{Ex2}(d), we observe optimal convergence for both the true error and the estimators in the overall pattern, however, with occasional oscillations, for both cases. Such oscillation is uniquely caused by the Nitsche method since it imposes the Dirichlet boundary condition weakly. Again, we observe that $\eta_{1}$ is more accurate than $\eta_{res}$ for most regular (non-oscillating) iterations. Nevertheless, it seems that $\eta_1$ has a stronger magnifying effect for the oscillation.   

\begin{figure}[ht]
\centering
\begin{tabular}{ccc}
{\includegraphics[width=0.30\textwidth]{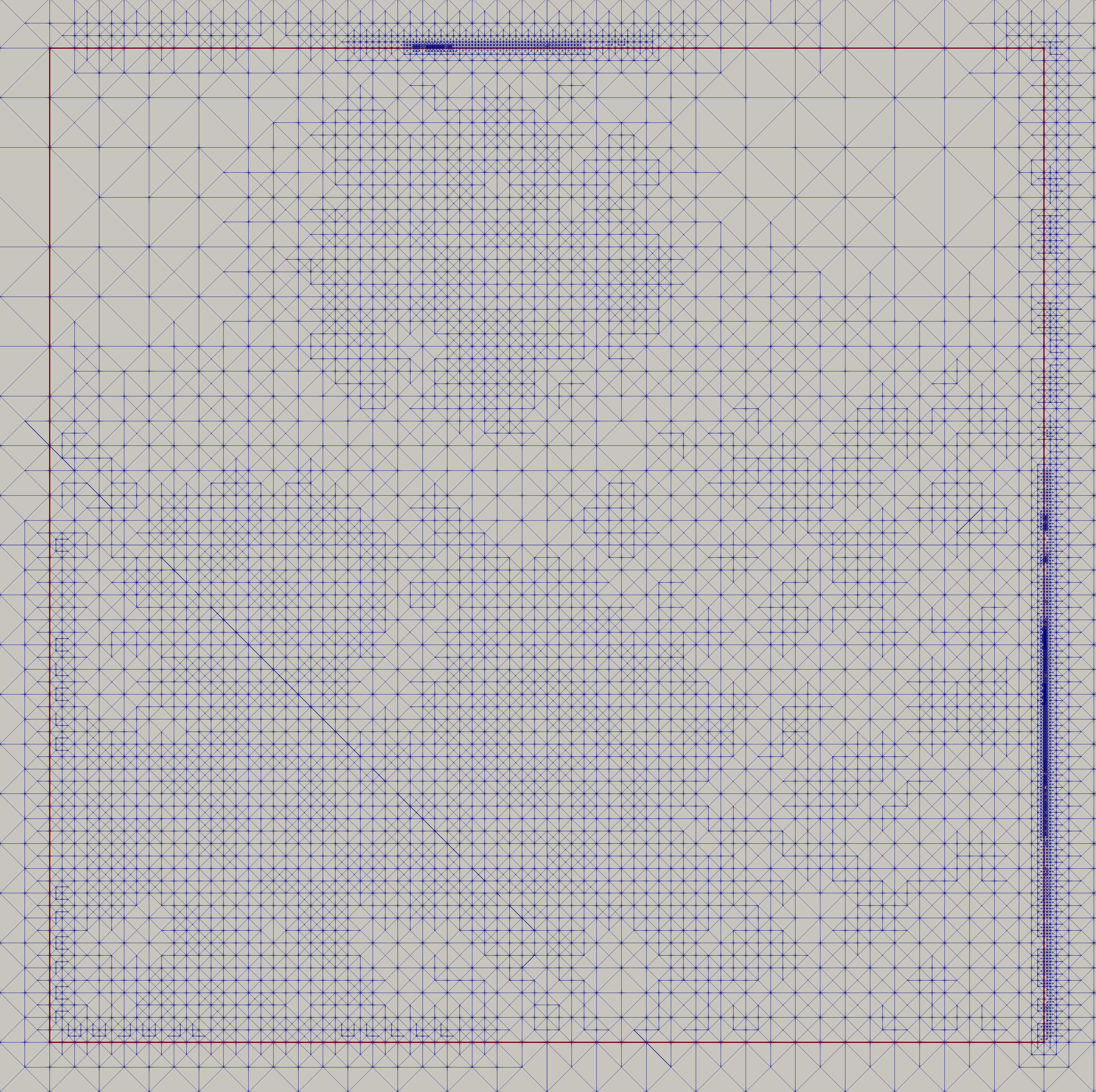} }&
{\includegraphics[width=0.30\textwidth]{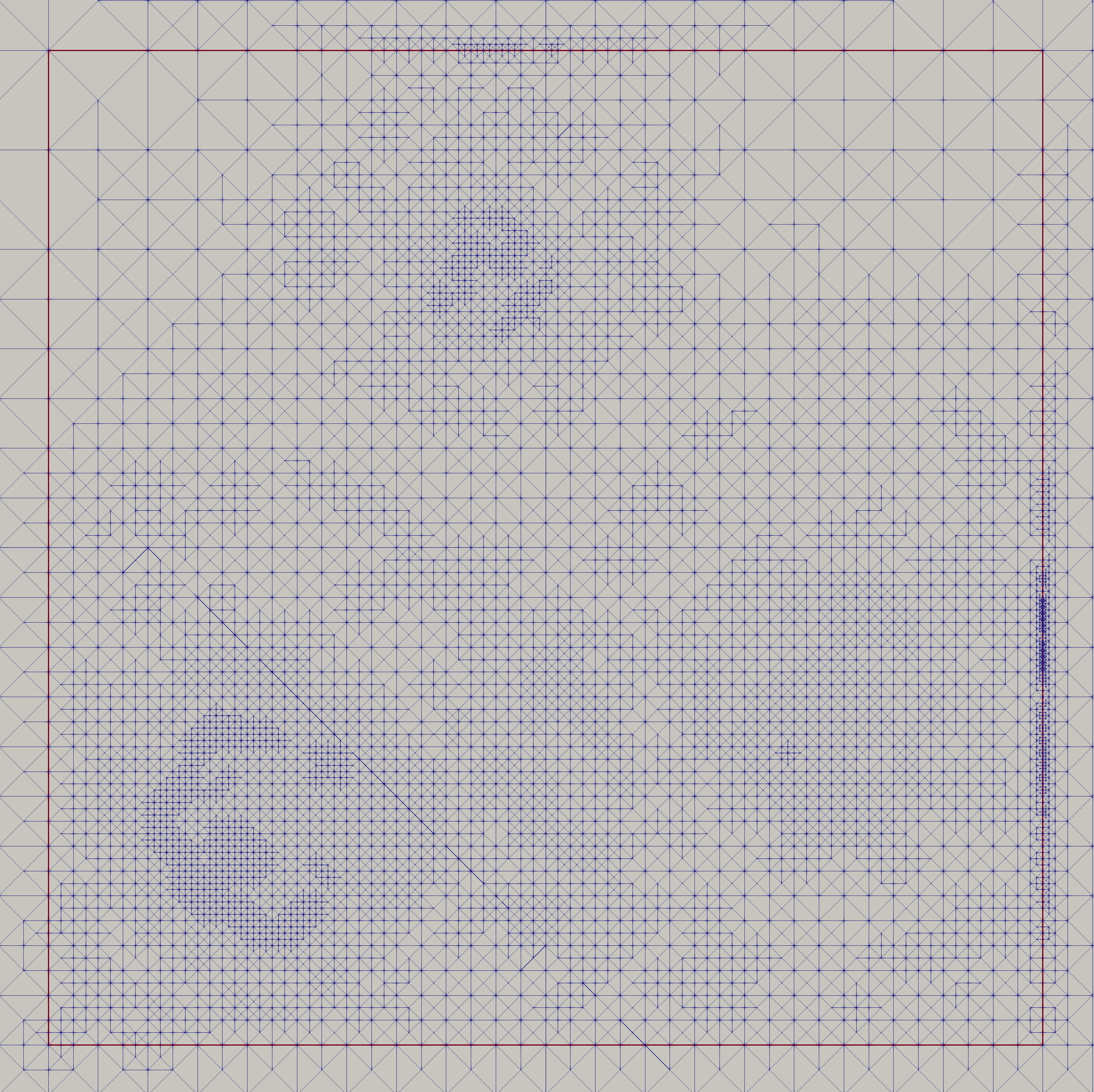}}&\\
(a)Mesh by $\eta_{1,K}$ &(b)Mesh by $\eta_{res,K}$&\\
{\includegraphics[width=0.30\textwidth]{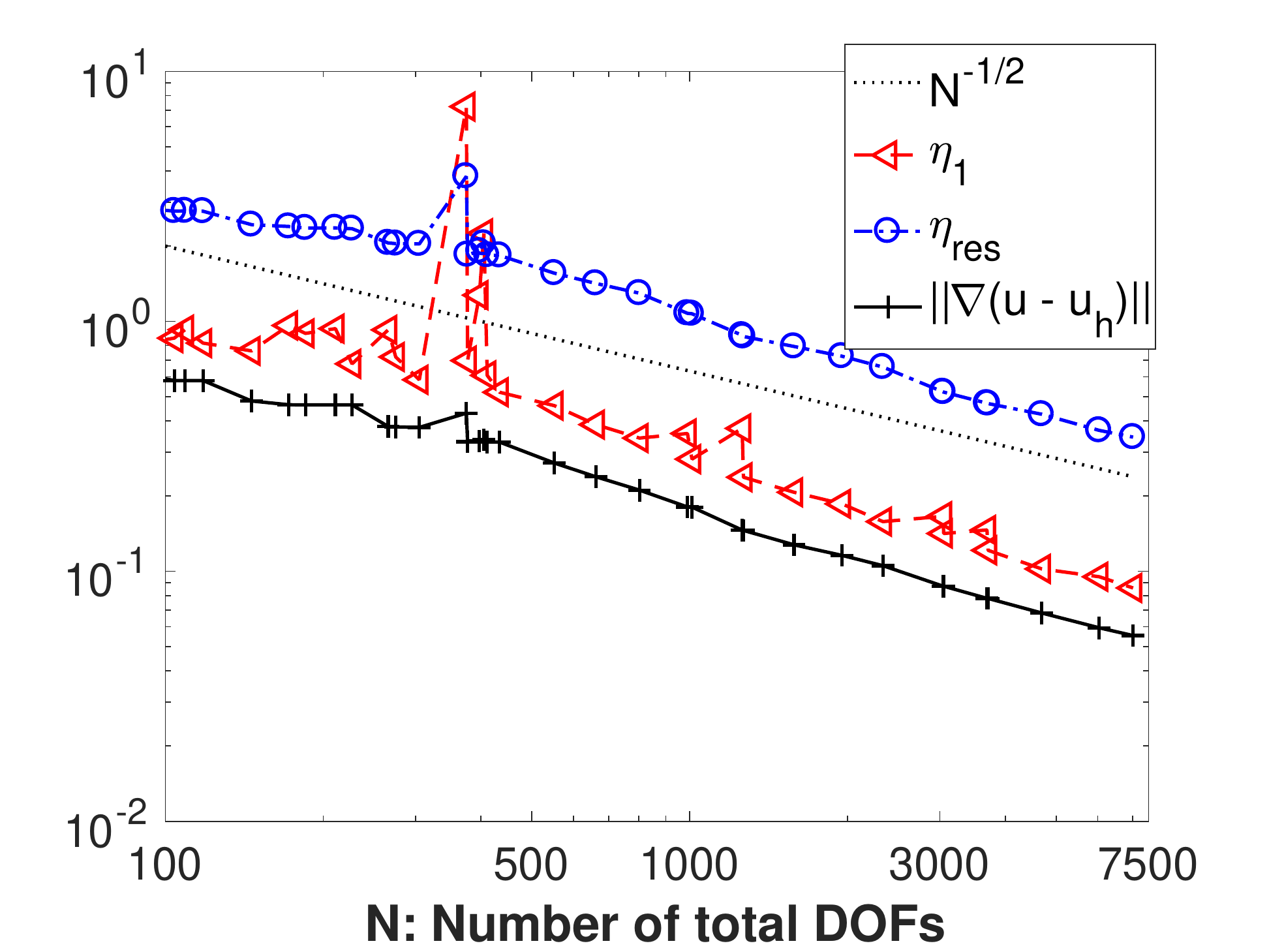} }&
{\includegraphics[width=0.30\textwidth]{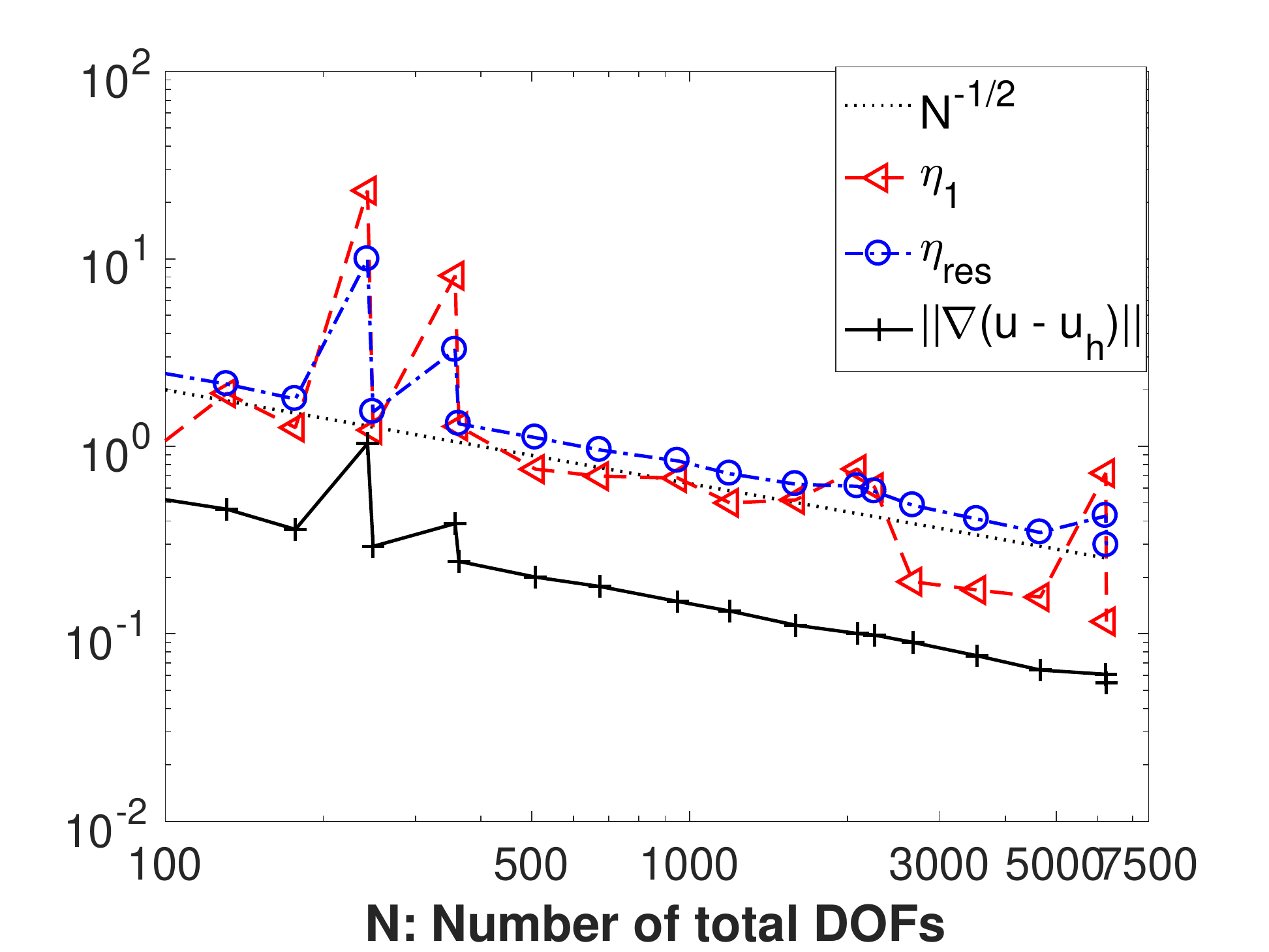}}&
{\includegraphics[width=0.30\textwidth]{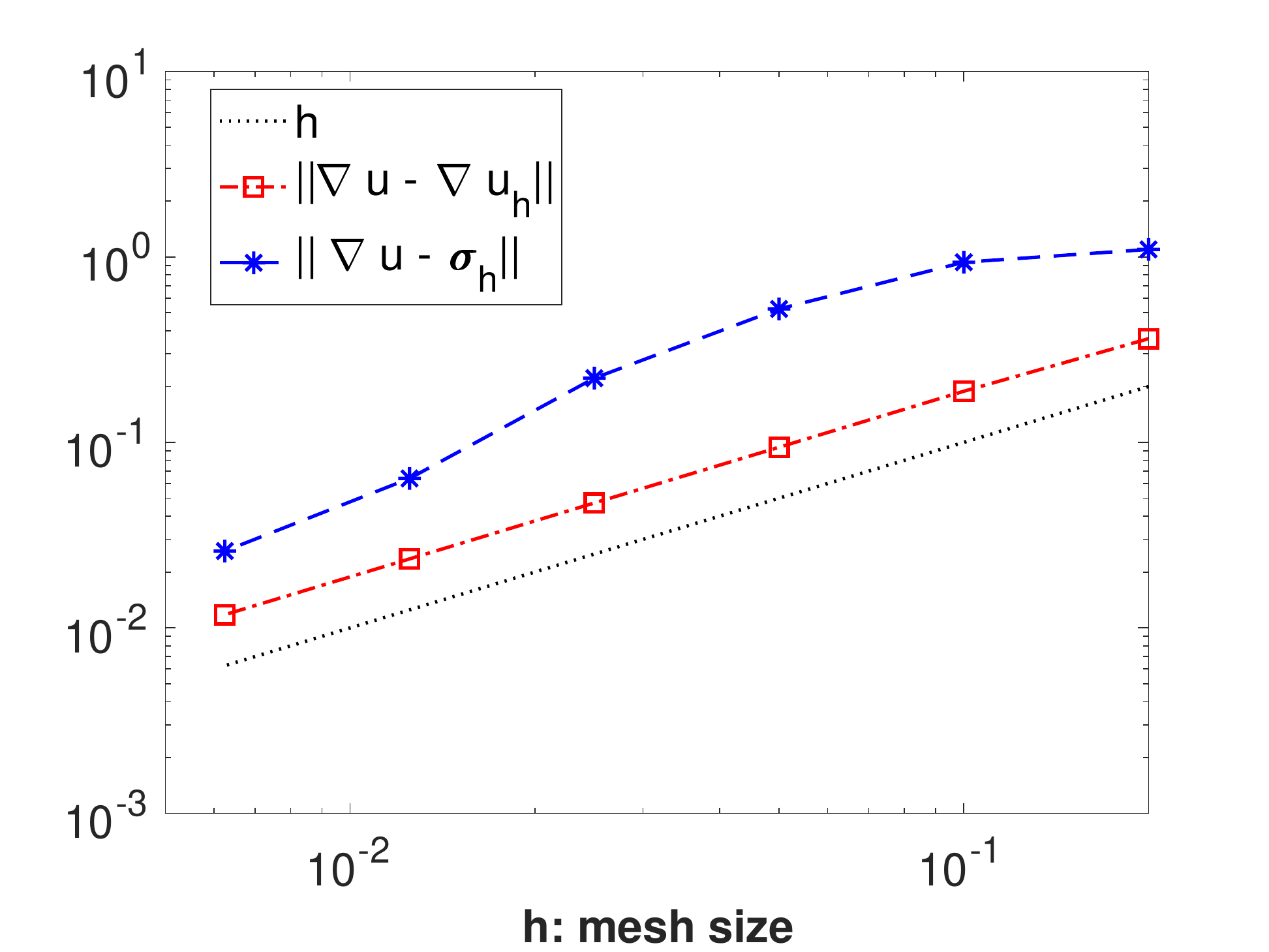}}\\
(c)Errors by $\eta_1$ &(d)Errors by $\eta_{res}$& (e)On uniform meshes\\
\end{tabular}
\caption{\cref{ex2}. Final meshes and convergence of error estimators} 
 \label{Ex2}
\end{figure}

\begin{example} \label{ex1}
In this example, we test our algorithm on an irregular domain.
The level set of the problem has a flower shape (see e.g., \cref{Ex1-no-correction}(a))  that has the 
following representation:
\[
	\rho = \min(\rho_0, \rho_1, \cdots, \rho_8)
\]
with
\[
	\begin{cases}
	\rho_0(x,y)= x^2 + y^2 - r^2, & r=2\\
	\rho_i(x,y) = (x - x_i)^2 + (y - y_i)^2 - r_i^2, &r_i = \sqrt{2} r(\sin( \pi/8) + \cos(\pi/8) ) \sin(\pi/8)
	\end{cases}
\]
for $i = 1, \cdots, 8$, and 
\[x_i = r(\cos(\pi/8)+\sin(\pi/8))\cos(i\pi/4), \quad
y_i = r(\cos(\pi/8)+\sin(\pi/8))\cos(i\pi/4).\]
The domain boundary is defined to be the zero level set, i.e., $\Og = \{ \bfx \in \mathbb{R}^2: \rho(\bfx) \le 0 \}$. 
The data are given such that $g = 0$ on $\partial \Og$ and 

\[   
	f(x,y) =\left\{
	\begin{array}{lll}
      	10 & \mbox{if }  (x - x_1)^2 + (y - y_1)^2 \le r_1^2/2,\\
	0 & \mbox{otherwise}.
\end{array} 
\right. \]
\end{example}

In the numerical scheme, we take $g_h \equiv 0$ and $f$ is  approximated by its $L^2$ projection into the discontinuous piecewise constant space. We start with a $8 \times 8$ crossed mesh on the rectangular domain $(-4,4) \times (-4,4)$.
With the stopping criteria that the total number of DOFs be not greater than $7000$ and marking percent set to be $15\%$, the final meshes obtained by $\eta_1$, $\eta_2$ and $\eta_{res}$ are given in
\cref{Ex1-no-correction}(a), (b) and (c), respectively. We observe similar meshes for the three cases and DOFs are centered around the heat source. From \cref{Ex1-no-correction}(c), (d) and (e), we observe optimal convergence rate for all error estimators. In this example, $\eta_1$ is very close to $\eta_2$ since there are no dense refinement on the boundary, and $\eta_{res}$ is relatively bigger. 
\begin{figure}[ht]
\centering
\begin{tabular}{ccc}
{\includegraphics[width=0.30\textwidth]{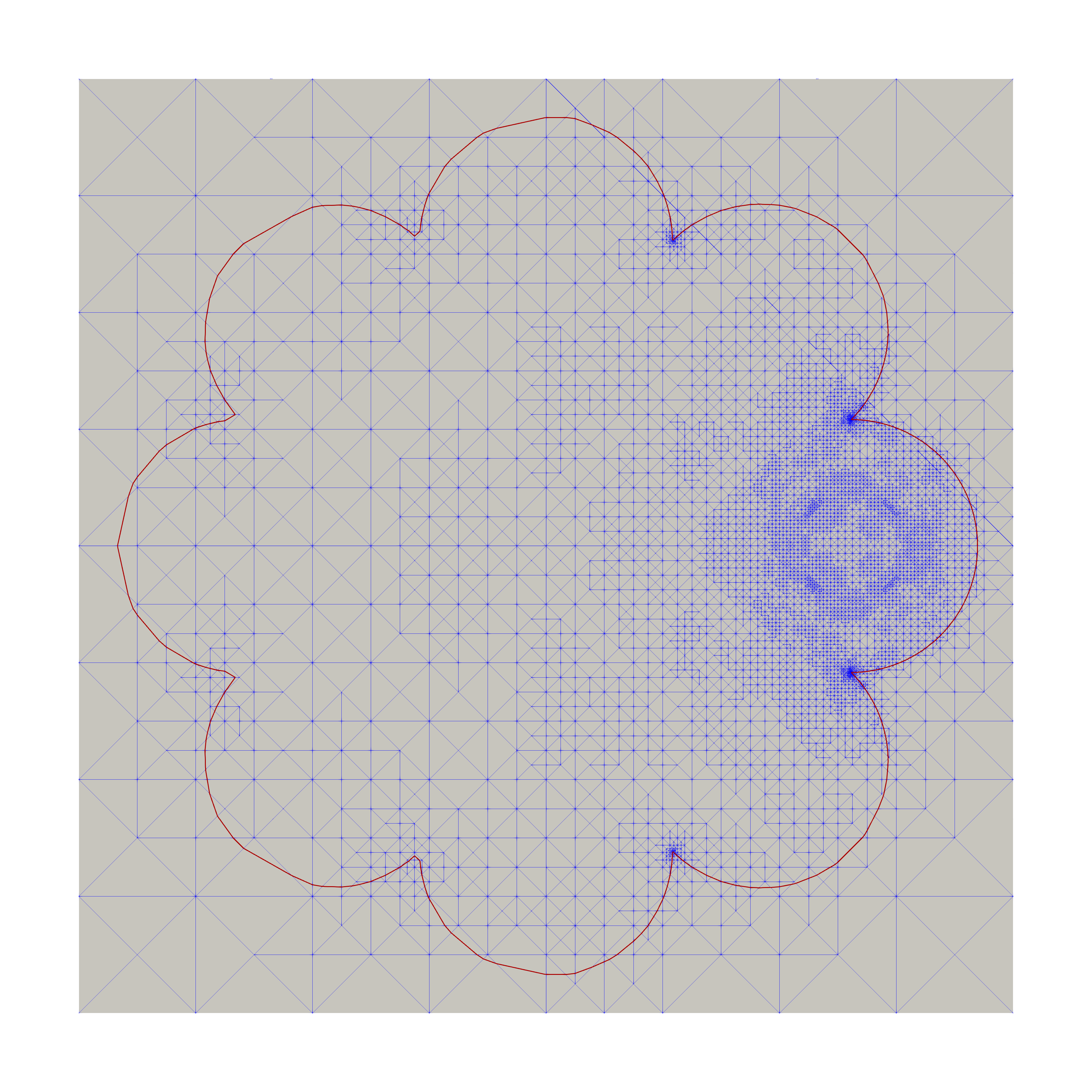} }&
{\includegraphics[width=0.30\textwidth]{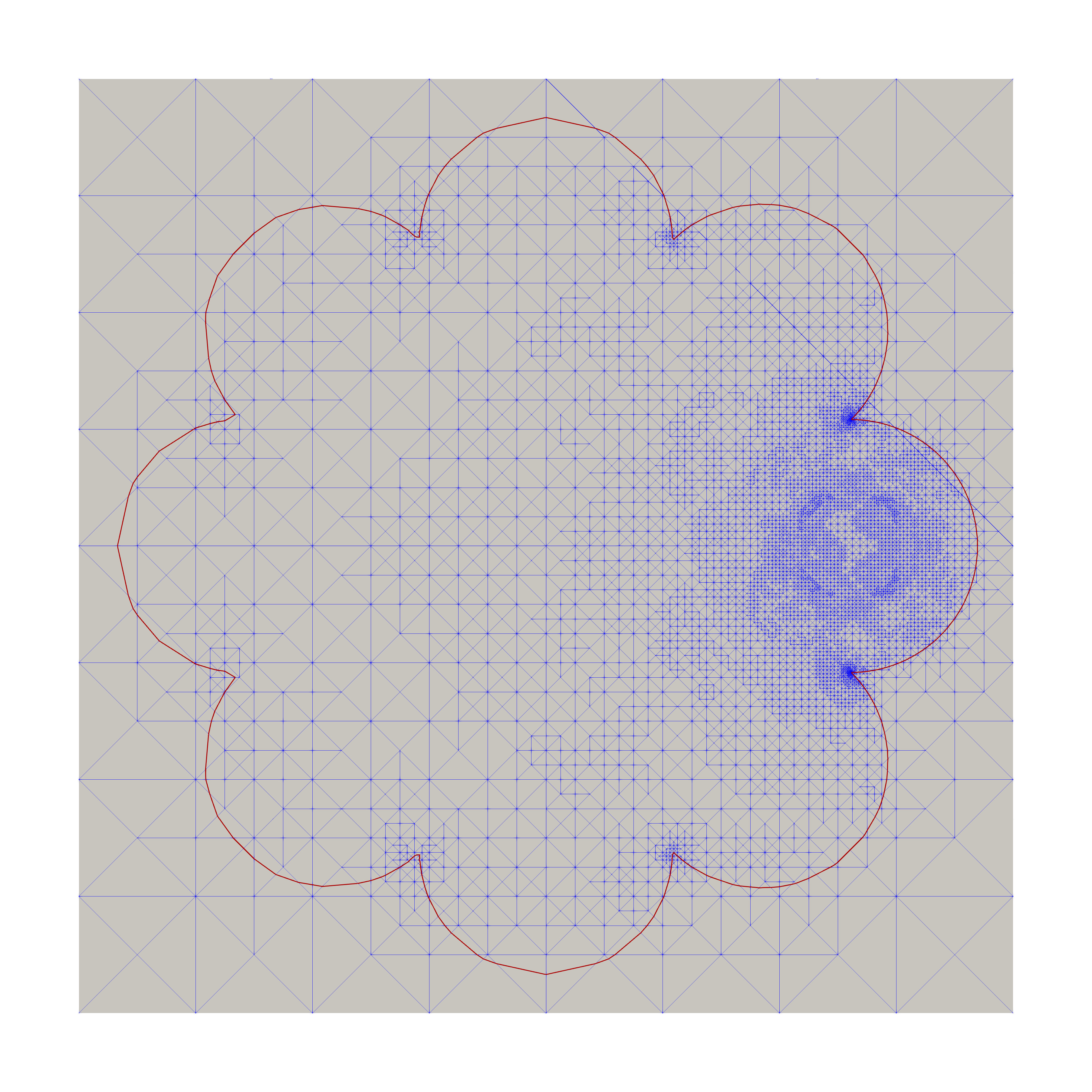}}&
{\includegraphics[width=0.30\textwidth]{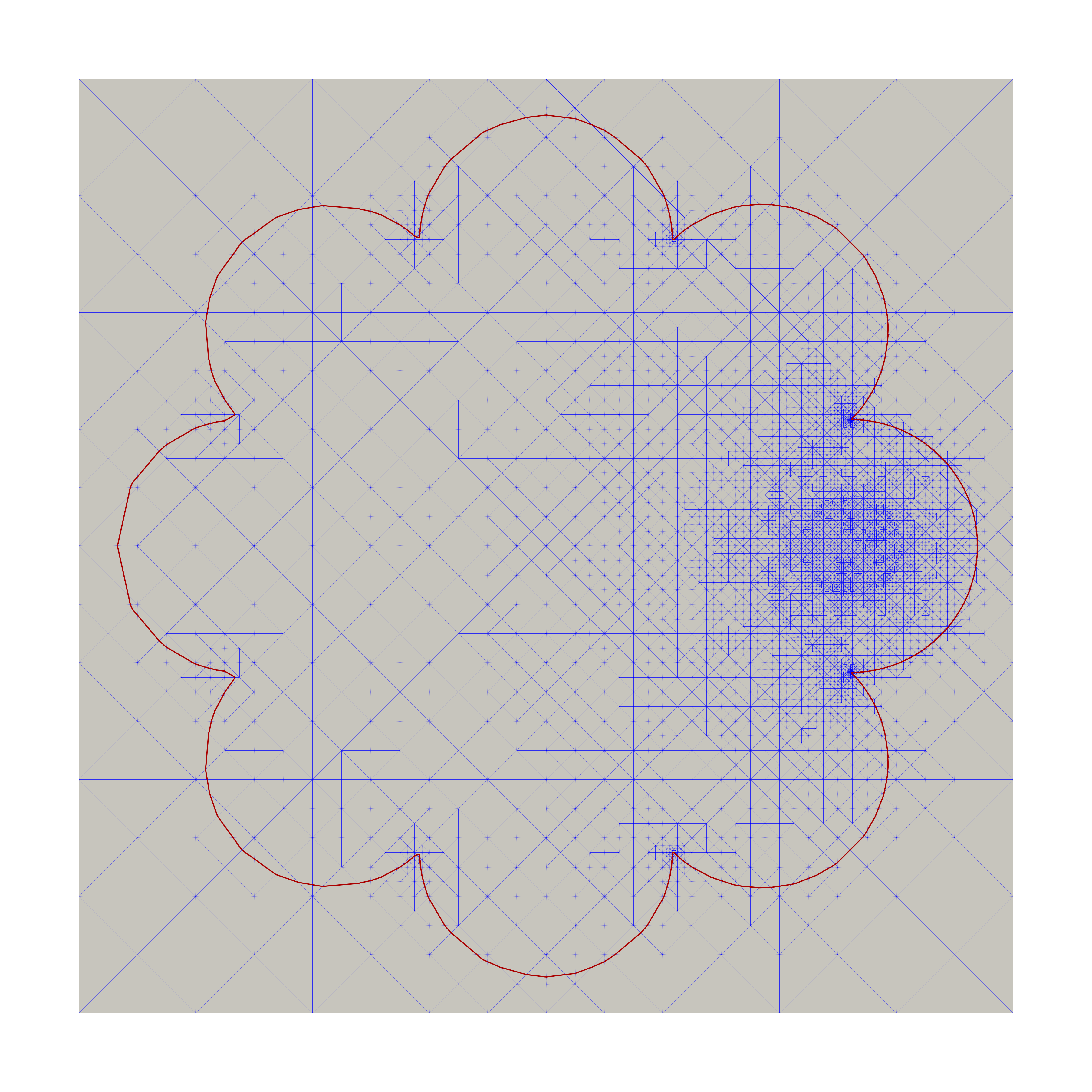}}\\
\\
(a)Meshes by $\eta_{1,K}$ &(b)Meshes by $\eta_{2,K}$& (c)Meshes by $\eta_{res, K}$\\
{\includegraphics[width=0.30\textwidth]{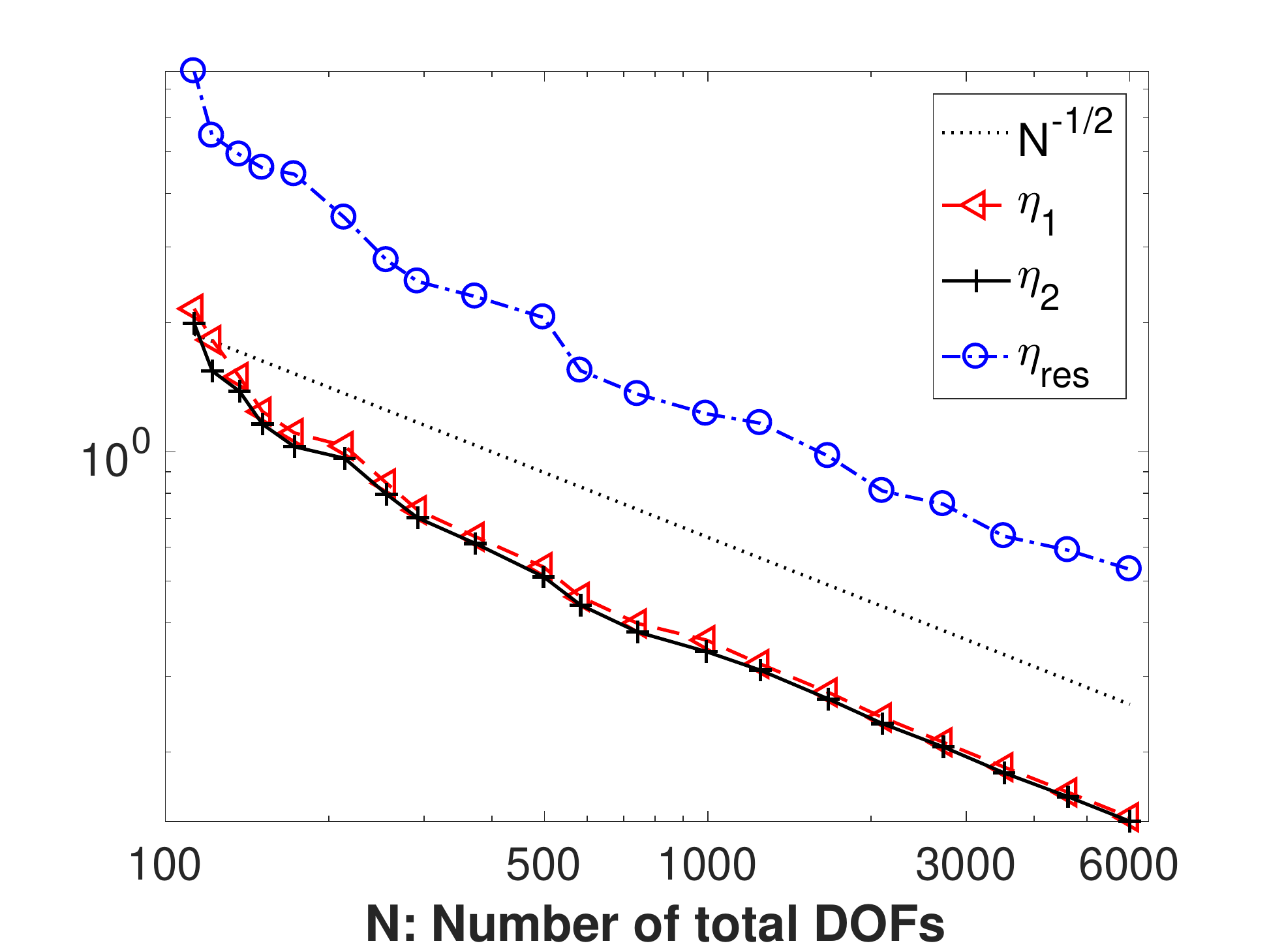} }&
{\includegraphics[width=0.30\textwidth]{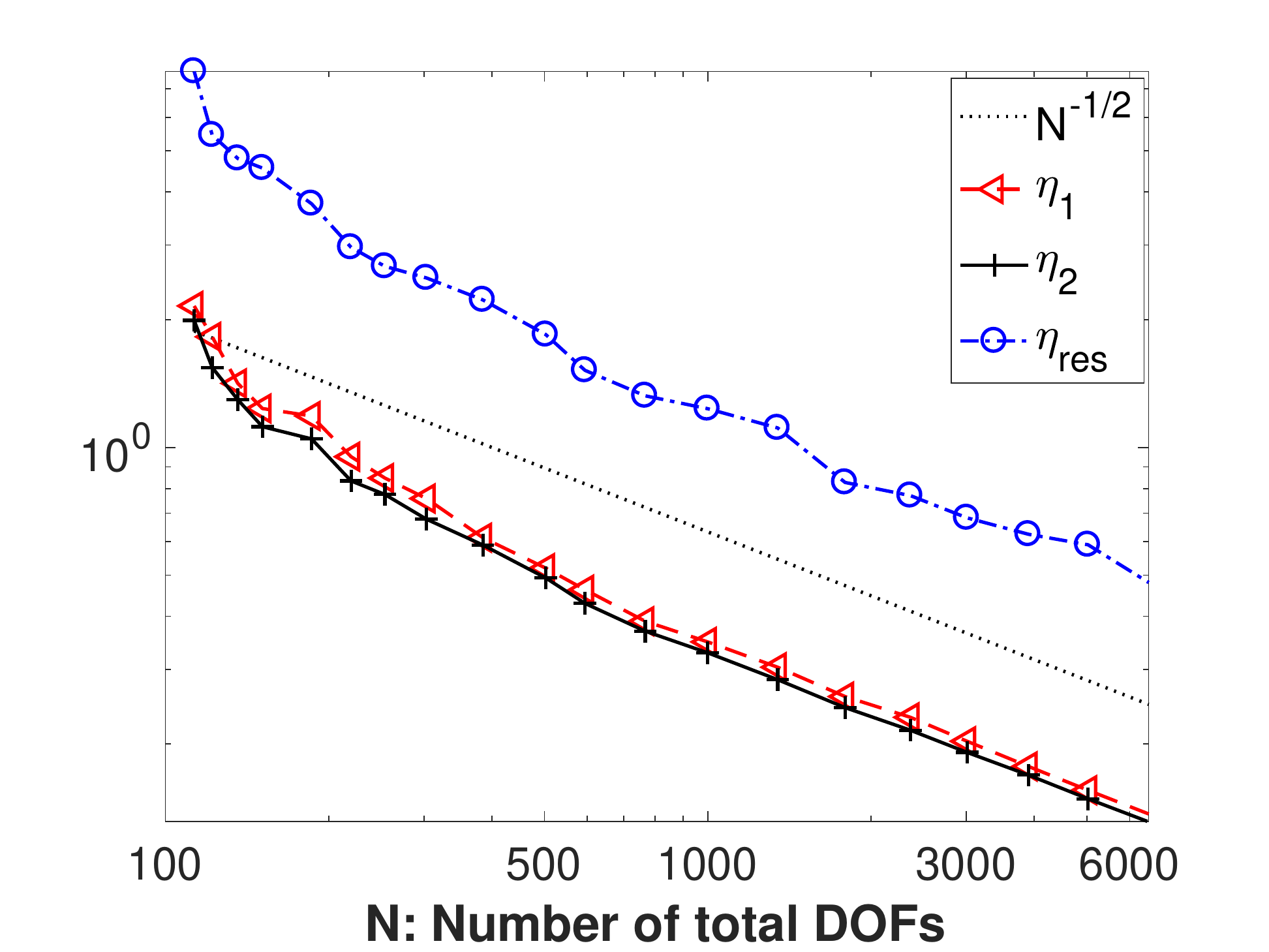}}&
{\includegraphics[width=0.30\textwidth]{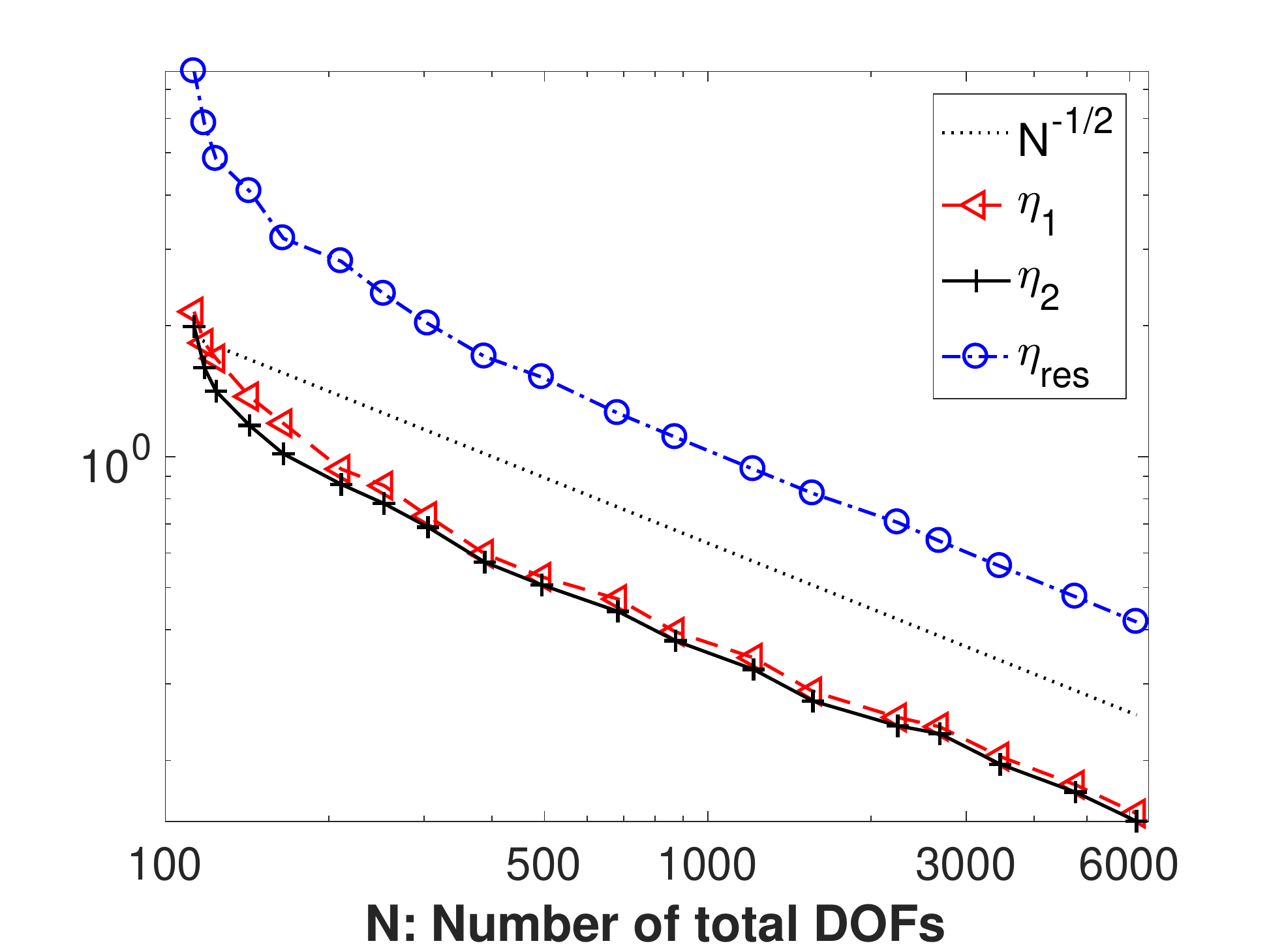}}\\\\
(d)Errors by $\eta_{1,K}$ &(e)Errors by $\eta_{2,K}$& (f)Errors by $\eta_{res, K}$\\
\end{tabular}
\caption{\cref{ex1}. Final meshes and convergence of error estimators} 
 \label{Ex1-no-correction}
\end{figure}

\begin{example}\label{ex3}
In this example, we consider the reentrant problem whose solution has the following polar representation:
\[
	u(r, \theta) = r^{\alpha} \sin(\alpha \theta),
\]
with $\alpha = \pi/ \omega$ and $\omega$ being the angle of the reentrant corner. In
this example, we take $\omega = 3/2 \pi$. The domain is set to be $\Omega = ([-1,1]^2 \setminus [0,1]\times[-1,0] )\cap B(0.95)$, where $B(0.95)$ is the ball with center $(0,0)$ and radius $0.95$. It is easy to check that $f =0$ in $\Omega$.
\end{example}
In the numerical scheme, we extend $f$ outside of $\Omega$ by $0$ and take $g_h$ to be the conforming linear interpolation of $u$ with respect to $\cT_{h}$.
The optimal convergence results on uniform meshes for the errors $\| \nabla (u - u_h)\|$ and $\| \bsigma - \bsigma_h\|$ are verified in \cref{Ex3}(e). 

In the AMR procedure, we choose to use the initial mesh $10 \times 10$ on the regular domain $(-1,1) \times (-1,1)$.
 With the marking percent set to be $10\%$ and the stopping criteria set such that the maximal number of degrees of freedom does not exceed $5000$, the final meshes generated by $\eta_{2,K}$ and $\eta_{res}$ are given in 
\cref{Ex3}(a) and \cref{Ex3}(b).  
The corresponding convergence rate of the estimators are presented in 
\cref{Ex3}(c) and \cref{Ex3}(d).
We again observe optimal convergence for both AMR procedures. This again indicates that the estimators work equivalently effective for problems with reentrant singularity on the boundary. However, $\eta_{2}$ is more accurate than $\eta_{res}$. 
For \cref{Ex3}(c), the mean ratio for  $\eta_{res}/\|\nabla (u - u_h)\|_{\Og}$ is $ 4.1$,
whereas the mean ratio for $\eta_1/\|\nabla (u - u_h)\|_{\Og}$ and $\eta_2/\|\nabla (u - u_h)\|_{\Og}$  is  $2.4$ and $1.5$, respectively. The corresponding ratios for \cref{Ex3}(d) are similar.
We note that for this example, using $\eta_{1,K}$ generates almost the same mesh as $\eta_{2,K}$. However, $\eta_2$ is more accurate than $\eta_1$ in both cases.

\begin{figure}[ht]
\centering
\begin{tabular}{ccc}
{\includegraphics[width=0.30\textwidth]{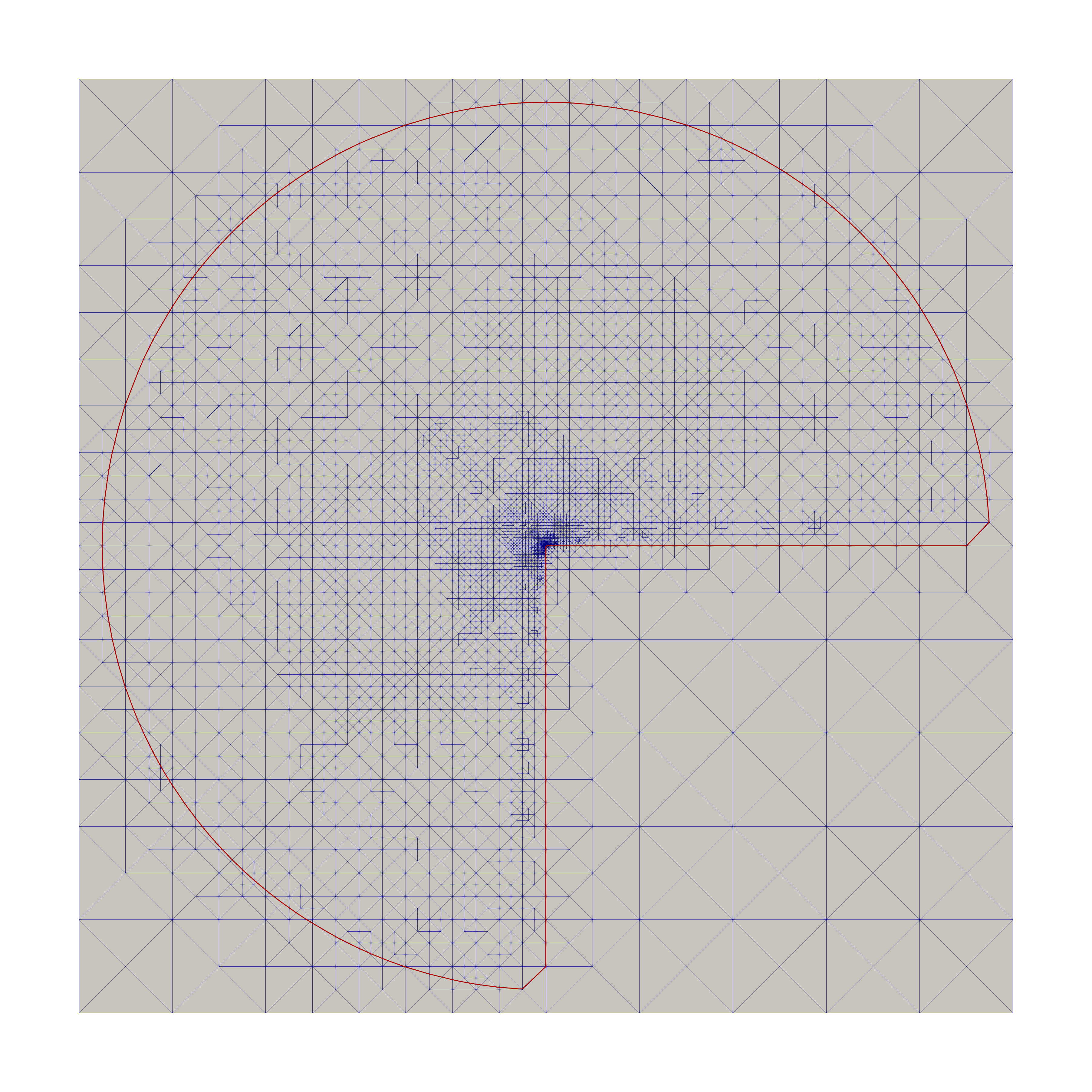} }&
{\includegraphics[width=0.30\textwidth]{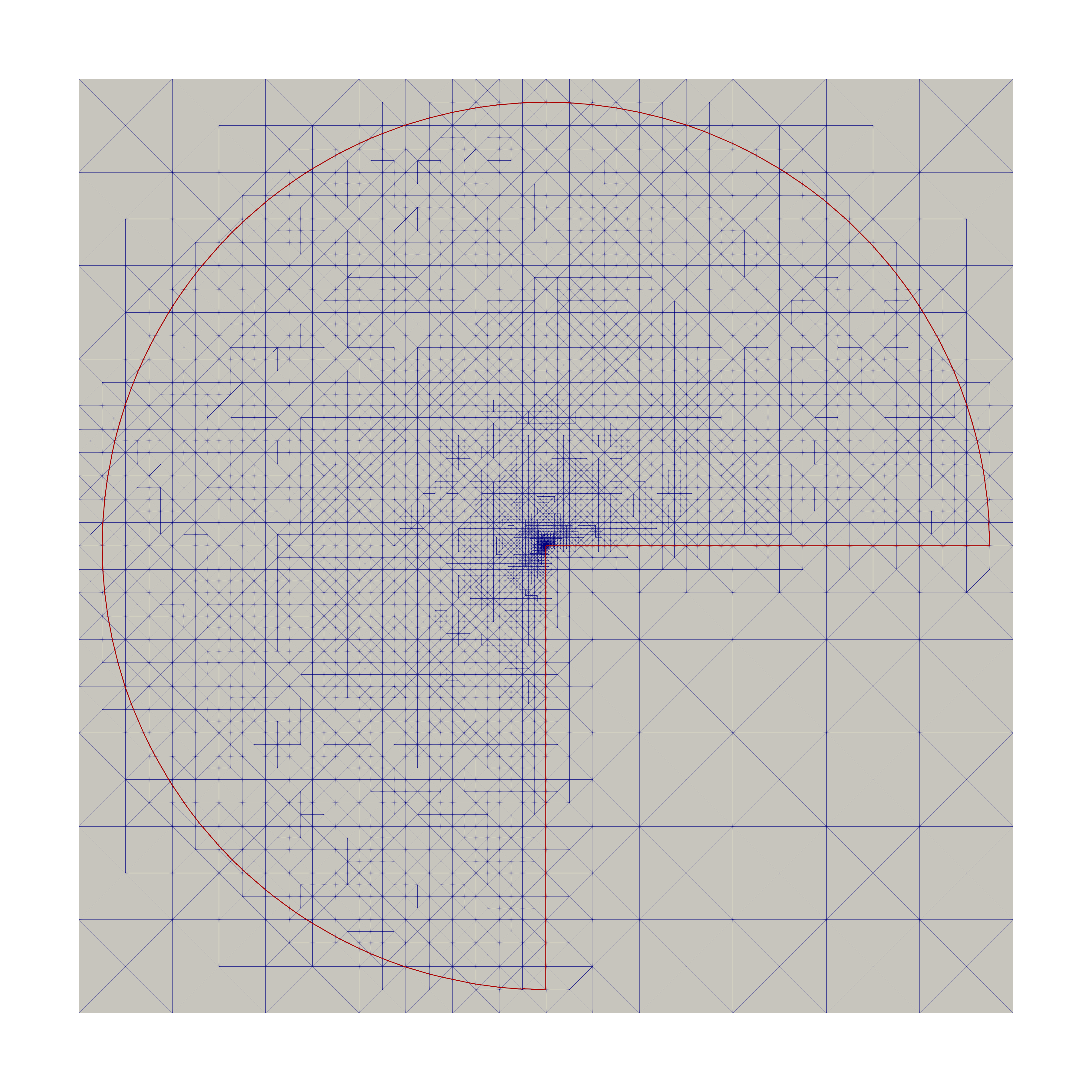}}&\\
(a)Mesh by $\eta_{2,K}$ &(b)Mesh by $\eta_{res,K}$&\\
{\includegraphics[width=0.30\textwidth]{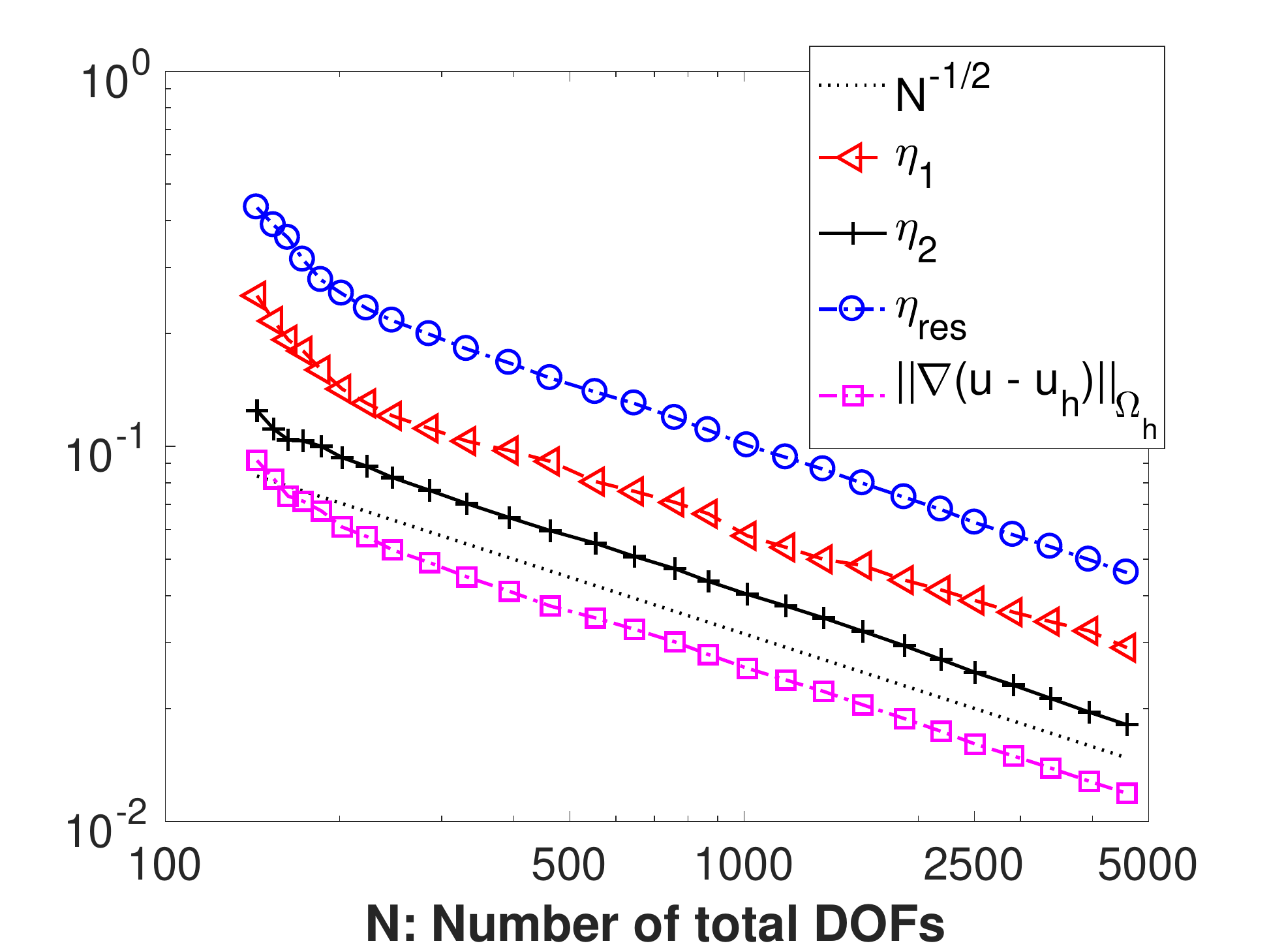} }&
{\includegraphics[width=0.30\textwidth]{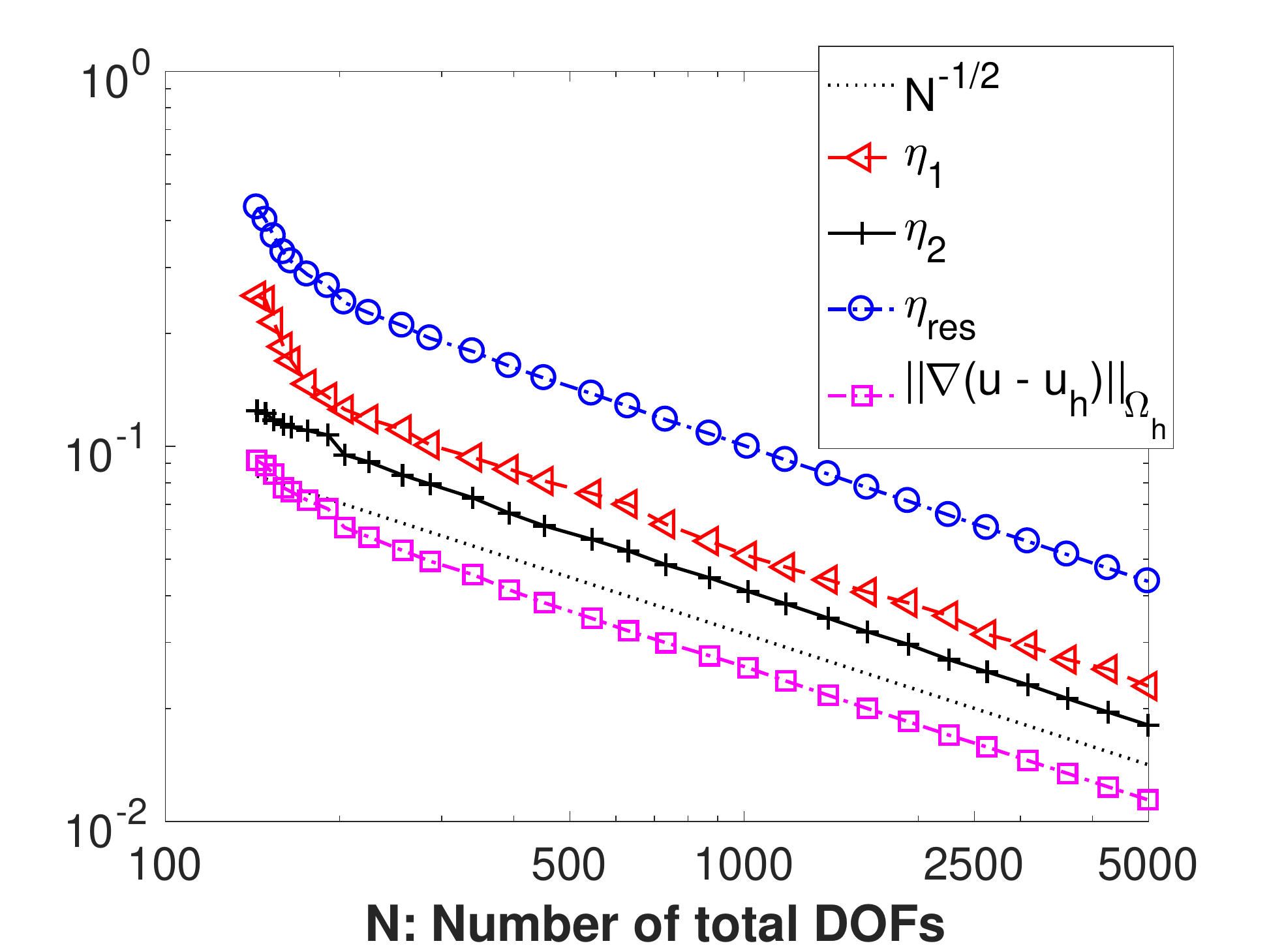}}&
{\includegraphics[width=0.30\textwidth]{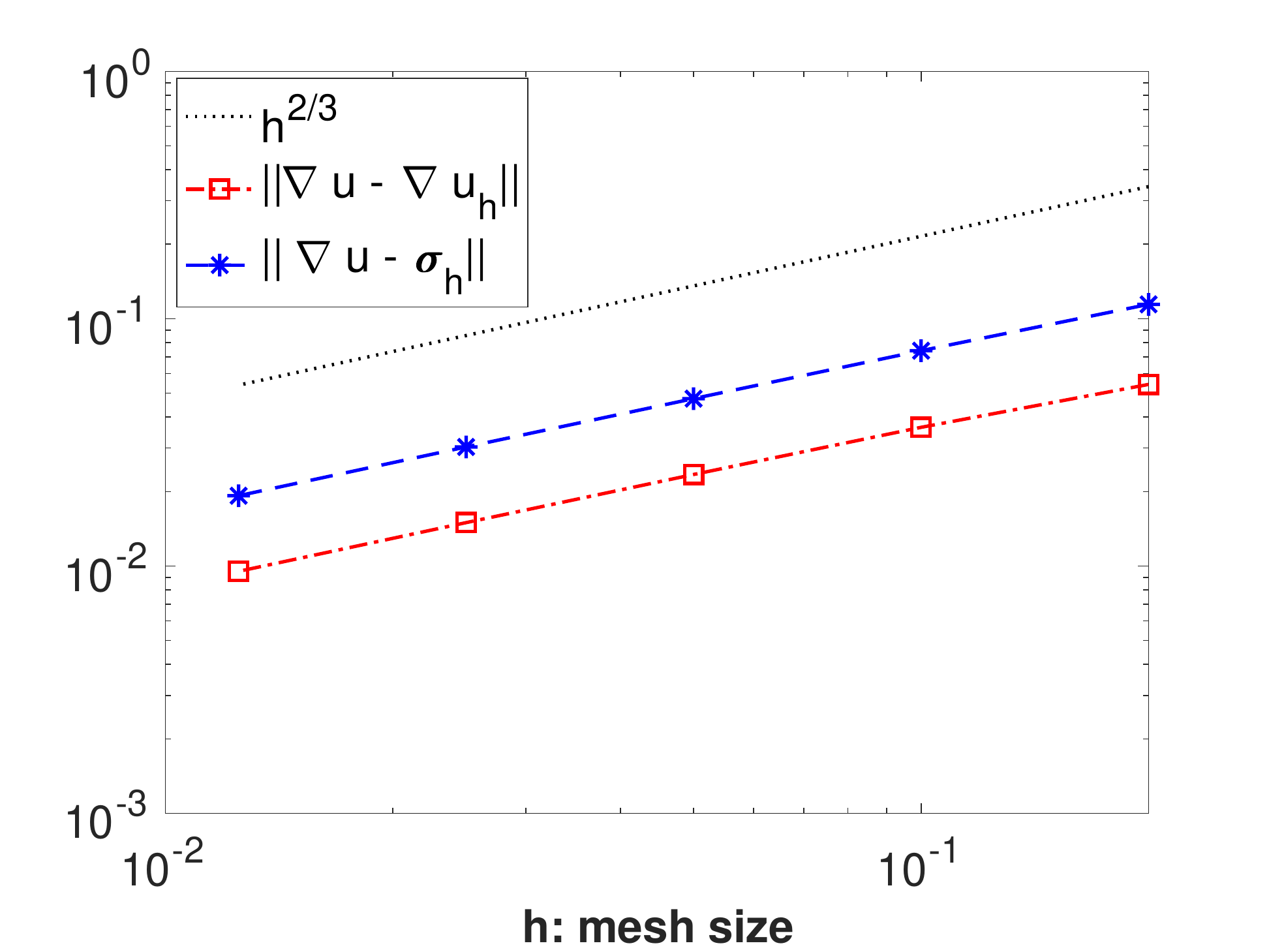}}\\
(c)Errors by $\eta_{2,K}$ &(d)Errors by $\eta_{res, K}$& (e)Errors on uniform meshes\\
\end{tabular}
\caption{\cref{ex3}. Final meshes and convergence of error estimators} 
 \label{Ex3}
\end{figure}

\bibliographystyle{siamplain}
\bibliography{references}
\end{document}